\def\er{{\mathbb R}}
\def\zet{{\mathbb Z}}
\def\Ex{{\mathbb E}}
\def\ind{\mathbbm{1}}
\def\te{{\mathbb T}}
\def\ve{\varepsilon}
\def\e{\varepsilon}
\def\de{\mathrm{d}}
\def\calf{\mathcal{F}}
\newcommand{\dd}{\mathrm{d}}
\newcommand{\E}{\mathbb{E}}
\newcommand{\p}{\mathbb{P}}
\newcommand{\R}{\mathbb{R}}
\newcommand{\T}{\mathbb{T}}
\newtheorem{thm}{Theorem}
\newtheorem{lem}[thm]{Lemma}
\newtheorem{prop}[thm]{Proposition}
\newtheorem{cor}[thm]{Corollary}
\newtheorem{theorem}[thm]{Theorem}
\newtheorem{lemma}[thm]{Lemma}
\newtheorem{corollary}[thm]{Corollary}
\theoremstyle{remark}
\newtheorem{remark}[thm]{Remark}
\theoremstyle{plain}
\newcommand{\red}{}
\newcounter{rea}
\begin{document}

%--------------------------------------------------------------------------%--------------------------------------------------------------------------

\title{Bounds on moments of weighted sums of finite Riesz products}

\author{Aline Bonami}
\address{(A.\!~B.) Institut Denis Poisson, CNRS-UMR 2013, Universit\'e d’Orl\'eans, France}
\email{Aline.Bonami@univ-orleans.fr}

\author{Rafa{\l} Lata{\l}a}
\address{(R.\!~L.) Institute of Mathematics \\ University of Warsaw\\ Banacha 2, 02-097, Warsaw, Poland}
\email{rlatala@mimuw.edu.pl}

\author{Piotr Nayar}
\address{(P.\,N.)  Institute of Mathematics\\ University of Warsaw\\ Banacha 2, 02-097, Warsaw, Poland}
\email{nayar@mimuw.edu.pl}

\author{Tomasz Tkocz}
\address{(T.\!~T.) Department of Mathematical Sciences \\ Carnegie Mellon University\\ Pittsburgh, PA 15213, USA}
\email{ttkocz@math.cmu.edu}

\thanks{This material is partially based upon work supported by the NSF grant DMS-1440140, while
the authors were in residence at the MSRI in Berkeley, California, during the fall semester of 2017. P.\!~N. and T.\!~T. were
also partially supported by the Simons Foundation. R.\!~L. and P.\!~N. were partially supported by the National Science Centre Poland grant
2015/18/A/ST1/00553 and T.\!~T. by NSF grant DMS-1955175. The research leading to these results is part of a project that has received funding from the European Research Council (ERC) under the European Union's Horizon 2020 research and innovation programme (grant agreement No 637851).}

\begin{abstract}
%\iffalse
%We consider classical Riesz products $R_N(t):=\prod_{j=1}^N(1+\cos(n_jt))$ for lacunary sequences of integer modes $(n_j)_{j \geq 1}$, defined on the one dimensional torus $\T$. 
%We prove that for any $p \geq 1$ there exists a constant $d_p$ such that for any sequences of modes satisfying $n_{j+1}/n_j \geq d_p$ and any vectors $v_i$ in an arbitrary normed space $E$, we have
%\[
%	\int_\T \left\|\sum_{k=1}^N v_k R_k \right\|^p  \sim  \sum_{k=0}^N \|v_k\|^p\int_\T R_k^p  
%\]
%for any $N \geq 1$, up to a constant depending only on $p$.
%\fi
%We establish matching lower and upper bounds for moments of weighted
%sums of finite Riesz products based on lacunary integer sequences with
%large enough ratios. Our bounds essentially show that those moments
%behave as though the products were functions with disjoint supports.
%Constants depend only on the order of the moments.
Let $n_j$ be a lacunary sequence of integers, such that $n_{j+1}/n_j\geq r$. We are interested in linear combinations of the sequence of finite Riesz products $\prod_{j=1}^N(1+\cos(n_j t))$. We prove that, whenever the Riesz products are normalized in $L^p$ norm ($p\geq 1$) and when $r$ is large enough, the $L^p$ norm of such a linear combination is equivalent to the $\ell^p$ norm of the sequence of coefficients. In other words, one can describe many ways of embedding $\ell^p$ into $L^p$ based on Fourier coefficients. This generalizes to vector valued $L^p$ spaces.

\end{abstract}

\maketitle

{\footnotesize
\noindent {\em 2010 Mathematics Subject Classification.} Primary: 42A55; Secondary: 26D05, 42A05.

\noindent {\em Key words.} Riesz products, moment estimates, lacunary sequences, trigonometric polynomials.}

%--------------------------------------------------------------------------%--------------------------------------------------------------------------

\section{introduction}

Let $\te=\er/2\pi\zet$ be the one dimensional torus and $m$ be the normalized Haar measure on $\te$. Let $(n_j)_{j\geq 1}$ be an  increasing sequence of positive integers. Riesz products are defined on $\te$ by
\begin{equation}\label{eq:Rk}
R_0\equiv 1 \quad \mbox{ and }\quad R_N(t):=\prod_{j=1}^N(1+\cos(n_j t))\quad \mbox{for }N=1,2,\ldots.
\end{equation}
To simplify the notation we also put 
\[
X_0 \equiv 1 \quad \text{ and } \quad X_j(t):=1+\cos(n_jt),\quad j=1,2,\ldots.
\]
It was Frigyes Riesz who first realized the usefulness of these objects treated as probability measures. When $n_{j+1}/n_j \geq 2$ for $j \geq 1$, the numbers $\sum_{j=1}^N \e_j n_j$ are all nonzero for nonzero vectors $(\e_j)_{j=1}^N \in \{-1,0,1\}^N$, due to the fact that for every $l$, $\sum_{k=1}^l n_k < n_{l+1}$. In particular, the zero mode of $R_N$ has Fourier weight $1$ and thus $R_N$ are densities of probability measures $\mu_N$. The weak-$\ast$ limit of $(\mu_N)$  is a singular measure which admits a number of remarkable Fourier-analytic properties. The reader is referred for instance to \cite{Z} for more information on properties of Riesz products and general trigonometric polynomials as well as to the short survey \cite{Keogh} of some applications of Riesz products.  We will always assume that $n_{j+1}/n_j \geq 3$ for $j \geq 1$, so that every integer $n$ can be written at most once as $\sum_{j=1}^N \e_j n_j$ for nonzero vectors $(\e_j)_{j=1}^N \in \{-1,0,1\}^N$.

In this article we shall study the sum $\sum_{k=0}^N v_k R_k$ where $v_k$ are vectors in a normed space $(E,\|\cdot\|)$. By the triangle inequality, we trivially have
\begin{equation}\label{eq:l1-prop}
	\int_\T  \left\| \sum_{k=0}^N v_k R_k \right\|\de m \leq \sum_{k=0}^N \left\| v_k \right\|.
\end{equation}  
We are interested in the reverse inequality and in $L^p$ inequalities. Our interest in this kind of inequalities comes back to a question of Wojciechowski, who asked for the validity of the reverse bound up to some universal constant (personal communication). He first  studied this problem  in the scalar case and in the following probabilistic context. Suppose we replace the functions $X_1, X_2, \ldots$ appearing in the definition of the Riesz products with a sequence of independent random variables $\bar{X}_1, \bar{X}_2, \ldots$ (defined on some probability space $(\Omega,\p)$), each having the same distribution as $1+\cos(Y)$, where $Y$ is uniform on $[0,2\pi]$. We then take $\bar{R}_N = \prod_{k=1}^N \bar{X}_k$ and of course $\bar{R}_0 \equiv 1$. Note that the functions $X_j$ defined on the probability space $(\T,m)$ have the same distribution as the random variables $\bar{X}_j$. Even though the $X_j$ are not independent, we shall see that they behave, in many ways, like independent random variables. Capturing this phenomenon in a quantitative way is one of the main difficulties in our investigation.  

In \cite{Woj}, Wojciechowski showed the existence of universal constants $c$ and $C$ as well as real numbers $a_1,a_2,\ldots$ such that for every $n$, $|\sum_{i=0}^k a_i | \leq C$ for all $k \leq n$ and $\E|\sum_{i=0}^n a_i \bar{R}_i| \geq cn$. This result was used in \cite{KazWoj1, KazWoj2} in the study of Fourier multipliers on the homogeneous Sobolev space $\dot W_1^1(\R^d)$. 

The reverse of \eqref{eq:l1-prop} for $\bar{R}_k$ was proved by the second named author in \cite{La} for general random variables. More generally, for any sequence $\bar{X}_1, \bar{X}_2,\ldots$ of i.i.d. non-negative random variables with mean one and such that $\p(\bar{X}_1=1)<1$, we have
\begin{equation}\label{eq:La-bound}
	\E \left\|  \sum_{k=0}^N v_k \bar{R}_k \right\| \geq c_{\bar{X}_1} \sum_{k=0}^N \left\| v_k \right\|,
\end{equation}
for any vectors $v_i$ in an arbitrary normed space $(E,\|\cdot\|)$, with a constant $c_{\bar{X}_1}$ depending only on the distribution of $\bar{X}_1$ (see Theorem 4 in \cite{La}; see also Theorem 3 therein for non identically distributed sequences $(\bar{X}_i)$). This clearly implies Wojciechowski's result with $a_i=(-1)^i$ (here $E=\R$). According to a theorem of Y. Meyer (see \cite{Mey}), under a stronger divergence of the sequence of modes, namely when $\sum_{k=1}^\infty \frac{n_k}{n_{k+1}} < \infty$, for any real numbers $a_i$, we have 
\[
	\int_\T \left| \sum_{k=0}^N a_k R_k  \right| \geq c_S \E \left| \sum_{k=0}^N a_k \bar{R}_k  \right|
\] 
for a positive constant $c_S$ which depends only on the $n_k$. In \cite{La}, this principle was combined with \eqref{eq:La-bound} to show the reverse of \eqref{eq:l1-prop} in the real case and under the above restrictive condition on the modes $n_i$.

Later the results of \cite{La} have been generalized by Damek et al. in \cite{DLNT}, where it was shown that for any $p>0$ and under the same assumptions on the i.i.d. sequence $(\bar{X}_i)$, we have 
\begin{equation}\label{damek}
\frac{1}{C_{p,\bar{X}_1}}\sum_{k=0}^N \|v_k\|^p  \ \E \bar{R}_k^p \ \leq \  \E\left\|\sum_{k=0}^N v_k \bar{R}_k\right\|^p \ \leq \ C_{p,\bar{X}_1}\sum_{k=0}^N \|v_k\|^p \ \E \bar{R}_k^p \qquad N \geq 1,
\end{equation}
with a constant $C_{p,\bar{X}_1}$ depending only on $p$ and the distribution of $\bar{X}_1$. 

%\textcolor{red}{The authors applied this bound to study the asymptotic tail behaviour of the stationary solution of certain random difference equations, called perpetuities. This line of research has been put forth in \cite{Ke} by Kesten and since then attracted a lot of attention (see also the seminal paper of Goldie, \cite{Gol}).}      

The aim of this article is to prove the following theorem.
 
\begin{thm}\label{thm:intro}
For every $p \geq  1$ there are positive constants $d_p, c_p, C_p$ depending only on $p$, such that for any integers $n_j$ satisfying $n_{j+1}/n_j \geq d_p$, $j=1,2,\ldots$ and for any vectors $v_0,v_1,\ldots$ in a normed space $(E,\|\cdot\|)$, we have
\begin{equation}\label{equiv}
c_p\sum_{k=0}^N \|v_k\|^p\int_\T R_k^p\de m \ \leq \  \int_\T\left\|\sum_{k=0}^N v_kR_k\right\|^p\de m \ \leq \ C_p\sum_{k=0}^N \|v_k\|^p\int_\T R_k^p\de m,
\end{equation}
for any $N \geq 1$, where $R_k$ are defined via \eqref{eq:Rk}.
\end{thm}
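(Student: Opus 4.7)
The strategy is to reduce the desired equivalence to the independent analogue \eqref{damek}, exploiting that under large lacunarity $n_{j+1}/n_j \geq d_p$, the functions $X_j = 1+\cos(n_j t)$ behave, for the purpose of computing $L^p$ moments of bounded Fourier degree, as if they were independent. The starting point is the exact Fourier identity, valid whenever $n_{j+1}/n_j \geq 3$:
\[
\sum_{k=0}^N v_k R_k \;=\; w_0 \;+\; \sum_{j=1}^N w_j D_j, \qquad w_j := \sum_{k=j}^N v_k, \qquad D_j(t) := R_{j-1}(t)\cos(n_j t).
\]
By the uniqueness of representations $m = \sum_i \epsilon_i n_i$ with $\epsilon_i \in \{-1,0,1\}$, the scalar functions $D_j$ are mean-zero and have pairwise disjoint Fourier supports. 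This mirrors the independent decomposition $\sum v_k \bar R_k = w_0 + \sum w_j \bar D_j$ with the same weights $w_j$ and $\bar D_j := \bar R_{j-1}\cos(Y_j)$, a genuine martingale-difference sequence.

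The heart of the argument is a moment comparison: for $d_p$ large enough,
\[
\int_\T \Bigl\| w_0 + \sum_{j=1}^N w_j D_j \Bigr\|^p \dd m \;\asymp_p\; \E \Bigl\| w_0 + \sum_{j=1}^N w_j \bar D_j \Bigr\|^p.
\]
When $p$ is an even integer and $E=\R$, this is an \emph{exact equality}: both sides expand into finite sums of terms of the form $\int \prod_j X_j^{e_j}\,\dd m$ (respectively $\prod_j \E\bar X_j^{e_j}$) with $\sum_j e_j \leq p$, and these agree whenever every relation $\sum a_j n_j = 0$ with $|a_j|\leq p$ forces $a=0$ --- which is valid for $d_p\geq 2p+1$. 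The same nonresonance argument gives $\int R_k^p\dd m = \E\bar R_k^p$, so \eqref{damek} settles the scalar even-integer case.

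For general $p\geq 1$ and arbitrary normed $E$, where $\|\cdot\|^p$ is not polynomial, we expect to argue by induction on $N$. The extra term $w_N D_N = v_N R_{N-1}\cos(n_N t)$ has Fourier support in $\pm n_N + [-M_{N-1}, M_{N-1}]$ with $M_{N-1}:=\sum_{i<N}n_i$, while $w_0 + \sum_{j<N} w_j D_j$ has support in $[-M_{N-1},M_{N-1}]$; for $d_p$ large, one constructs an explicit smooth Fourier multiplier $K$ --- convolution with a fixed $L^1$-kernel on $\T$ --- equal to $1$ on the top band and $0$ on the lower band. Because $K$ acts by $L^1$-convolution, it is bounded on $L^p(E)$ with a constant depending only on $p$ for \emph{any} Banach space $E$. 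Applying $K$ isolates $v_N D_N$, so that $\|K\sum v_k R_k\|_{L^p(E)} \asymp_p \|v_N\|\cdot\|R_N\|_p$ (using $\|D_N\|_p\asymp_p\|R_N\|_p$, which also follows from the nonresonance identities applied at a single level), while $(I-K)\sum v_k R_k = w_0 + \sum_{j<N} w_j D_j$. The induction hypothesis closes the loop.

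The main obstacle is the Banach-valued step above: establishing the moment comparison with constants depending only on $p$ and not on $E$. Large lacunarity provides the Fourier separation on which the $L^1$-kernel construction depends, and the required kernel smoothness is precisely what forces the threshold $d_p$ to depend on $p$.
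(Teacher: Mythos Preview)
Your plan has a genuine gap at its core. The moment comparison
\[
\int_\T \Bigl\| w_0 + \sum_{j=1}^N w_j D_j \Bigr\|^p \dd m \;\asymp_p\; \E \Bigl\| w_0 + \sum_{j=1}^N w_j \bar D_j \Bigr\|^p
\]
is simply \emph{false} for general $p>1$ and arbitrary lacunarity threshold. Take $v_0=\cdots=v_{N-1}=0$, $v_N=1$; the comparison becomes $\int_\T R_N^p\,\dd m\asymp_p\E\bar R_N^p$, and the paper shows explicitly (the two lemmas following the Schneider-condition discussion) that for every even $q\ge 4$ one can build a sequence with $n_{j+1}/n_j\ge q$ for which $\|R_k\|_p/\|\bar R_k\|_p$ is unbounded in $k$, for all $p$ outside a discrete exceptional set. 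So no finite choice of $d_p$ rescues the transfer to the independent model except when $p$ is an even integer, which is exactly the case you handle correctly.

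Your fallback ``induction on $N$ with an $L^1$ Fourier projection $K$'' does not close either. First, $(I-K)\sum_{k\le N}v_kR_k$ is not $\sum_{k<N}v_kR_k$ but $\sum_{k<N-1}v_kR_k+(v_{N-1}+v_N)R_{N-1}$, so already the bookkeeping of the right-hand side is off. More seriously, each application of $K$ and $I-K$ costs a fixed constant $>1$ on $L^p(E)$, and iterating $N$ times makes the implied constants blow up exponentially in $N$. For the lower bound the situation is worse: boundedness of $K$ only gives $\|f\|_p\gtrsim\max(\|Kf\|_p,\|(I-K)f\|_p)$, never the sum, so you cannot recombine the two pieces into $\sum_k\|v_k\|^p\|R_k\|_p^p$. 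Unconditionality of the $D_j$ in $L^p(E)$ would do it, but that is a UMD-type property of $E$ which is not assumed. You acknowledge this obstacle in your last paragraph, but it is not a technical detail---it is the whole problem.

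The paper's route is entirely different and does not pass through the independent model. For the lower bound with $p>1$ it runs a delicate induction, not on $N$, but simultaneously on $N-l$ over a \emph{family} of weighted measures $\dd\mu=\prod_{j\le l}h_j(n_jt)\,\dd m$ with $h_j\in\{1,\tfrac12\varphi_k^p,1-\tfrac12\varphi_k^p\}$ and $\varphi_k(t)=((1-\cos t)/2)^k$; the weights create regions where the ``new'' tail $\sum_{j>l}v_jR_j$ is damped relative to the ``old'' head $\sum_{j\le l}v_jR_j$, which makes the elementary pointwise inequality of Lemma~\ref{lem:Lp5} applicable and prevents constant blow-up. For $p=1$ the paper uses a convolution trick with auxiliary Riesz products that \emph{does} reduce to the i.i.d.\ case, but only because all $L^1$ norms equal $1$ there. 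The upper bound for $p>1$ is proved by a separate induction on $\lceil p\rceil$ rather than on $N$.
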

{\red In words, the normalized sequence $(R_k/\|R_k\|_{L_p(\T)})$ is $\ell_p$-stable on its span.}
The lower bound in the case $p=1$ answers the original question of Wojciechowski. Let us also note that for $p>1$, both the upper and the lower bounds are non-trivial (as opposed to the case $p=1$ where the upper bound is easy -- see \eqref{eq:l1-prop}). 
The values of the constants $d_p, c_p$ and $C_p$ that can be obtained from our proofs are far from optimal. In particular, we have $\lim_{p \to 1^+} d_p=\infty$ and $\lim_{p \to 1^+} c_p=0$, which is inconsistent with the case $p=1$. Due to these blow-ups as $p \to 1^+$, our proof in the case $p=1$ is different from the proof for $p>1$. It is based on transferring the independent case of \cite{La} using Riesz products. We restate the result for $p=1$ with numerical values of the constants. (For explicit bounds on the constants for $p>1$, see Remark \ref{rem:consts}.)

\begin{thm}
\label{thm:lowerL1}
There exists a constant ${\red c_1> 3.1\cdot 10^{-8}}$ such that for any positive integers $n_j$ satisfying $n_{j+1}/n_j\geq 3$ and for any vectors
$v_0,v_1,\ldots$ in a normed space $(E,\|\cdot\|)$, we have
\[
\int_{\te}\Big\|\sum_{j=0}^N v_jR_j\Big\|\de m\geq c_1\sum_{j=0}^N\|v_j\|
\]
for $R_k$ defined in \eqref{eq:Rk}.
\end{thm}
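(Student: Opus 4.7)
The plan is to transfer the independent-variable bound \eqref{eq:La-bound} of \cite{La} (applied to $\bar X_1=1+\cos Y$, $Y$ uniform on $[0,2\pi]$) to the Riesz product setting via a phase-randomization trick. For $\theta=(\theta_j)\in[0,2\pi]^N$ define the twisted Riesz products
\[
R_k^\theta(t) := \prod_{j=1}^k\bigl(1+\cos(n_jt+\theta_j)\bigr),
\]
so $R_k^0=R_k$. The key distributional fact is that for each fixed $t\in\te$, if the $\theta_j$ are i.i.d.\ uniform on $[0,2\pi]$ then $(R_0^\theta(t),\ldots,R_N^\theta(t))$ has the same joint law as $(\bar R_0,\ldots,\bar R_N)$, since $n_jt+\theta_j\bmod 2\pi$ is uniform and these remain independent. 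Applying \eqref{eq:La-bound} pointwise in $t$ and integrating via Fubini gives
\[
\E_\theta\int_\te\Big\|\sum_{k=0}^Nv_kR_k^\theta(t)\Big\|\,\de m(t)\;\geq\; c_{\bar X_1}\sum_{k=0}^N\|v_k\|,
\]
where $c_{\bar X_1}$ is the explicit constant from \cite[Theorem~4]{La} for $\bar X_1=1+\cos Y$.

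The next step is to replace the $\theta$-averaged left-hand side by its value at $\theta=0$, namely $\int_\te\|\sum v_kR_k\|\,\de m$. Because $n_{j+1}/n_j\geq 3$, every integer of the form $\omega=\sum_j\e_jn_j$ with $\e\in\{-1,0,1\}^N$ has a unique such representation, so the Fourier coefficient of $f^\theta:=\sum v_kR_k^\theta$ at $\omega$ equals that of $f:=\sum v_kR_k$ multiplied by the unimodular phase $e^{i\sum_j\e_j\theta_j}$. Thus $f\mapsto f^\theta$ is a unitary Fourier multiplier on the lacunary frequency set $\Lambda_N:=\{\sum_j\e_jn_j:\e\in\{-1,0,1\}^N\}$. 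If we can show, uniformly in $\theta$, that $\int_\te\|f^\theta\|\,\de m\leq C\int_\te\|f\|\,\de m$, then averaging over $\theta$ yields
\[
\int_\te\|f\|\,\de m\;\geq\;\frac{c_{\bar X_1}}{C}\sum_{k=0}^N\|v_k\|,
\]
which is the desired conclusion.

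The main obstacle is precisely this uniform bound on $\int_\te\|f^\theta\|\,\de m$ in terms of $\int_\te\|f\|\,\de m$. In the scalar case $E=\R$ it would follow from $\Lambda_N$ being a Sidon set with Sidon constant depending only on the lacunarity ratio (independently of $N$), since Sidon-ness yields $\|g\|_{L^1(\te)}\asymp\|g\|_{L^2(\te)}$ for every trigonometric polynomial $g$ with Fourier support in $\Lambda_N$, and the $L^2$-norm is manifestly phase-invariant. For a general normed $E$ this equivalence need not hold directly; the plan is to argue coordinate-wise via duality with unit functionals in $E^*$, combined with a Rider-type estimate adapted to the Riesz-product structure (positivity, product form). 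The translation identity $R_k^\theta(t+s)=R_k^{\theta+(n_js)_j}(t)$, which shows that $\theta\mapsto\int_\te\|f^\theta\|\,\de m$ is constant on the one-parameter orbits $\{\theta+(n_js)_j:s\in\te\}\subset[0,2\pi]^N$, reduces the $\theta$-integration to a quotient of dimension $N-1$ and is a useful simplification in the quantitative bookkeeping. Tracking the constants $c_{\bar X_1}$ and $C$ explicitly should yield the stated bound $c_1>2\cdot10^{-5}$.
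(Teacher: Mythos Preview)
Your phase-randomization idea is exactly the right starting point, and the reduction to the i.i.d.\ bound \eqref{eq:La-bound} via Fubini is correct. The gap is precisely at the step you flag as ``the main obstacle'': the uniform bound $\int_\te\|f^\theta\|\,\de m\leq C\int_\te\|f\|\,\de m$. Your proposed justification via a Sidon constant for $\Lambda_N$ independent of $N$ is false. Indeed, $R_N$ itself has Fourier support in $\Lambda_N$, has $\int_\te R_N\,\de m=1$, but $\sum_{n\in\Lambda_N}|\widehat{R_N}(n)|=\prod_{j=1}^N(1+\tfrac12+\tfrac12)=2^N$, so the Sidon constant of $\Lambda_N$ is at least $2^N$. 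More generally, the multiplier $\widehat f(n)\mapsto e^{i\sum_j\e_j\theta_j}\widehat f(n)$ is, on $\te^N$, just translation by $\theta$ (hence an $L^1$-isometry), but transferring this to $\te$ with a uniform constant is exactly the content of Proposition~\ref{prop:schn2}, which requires Schneider's condition $\sum(n_j/n_{j+1})^2<\infty$ and is not available under $n_{j+1}/n_j\geq 3$ alone. The duality/Rider sketch does not get around this.

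The paper's fix is a small but decisive modification: instead of implementing the phase multiplier directly, one \emph{convolves} $f=\sum_k v_kR_k$ with the Riesz product $P_\psi(x)=\prod_j(1+\cos(n_jx+\psi_j))$. Since $P_\psi\geq 0$ and $\int_\te P_\psi\,\de m=1$, Young's inequality gives $\int_\te\|P_\psi\ast f\|\,\de m\leq \int_\te\|f\|\,\de m$ immediately, for arbitrary normed $E$ and with constant $1$. The price is that $P_\psi\ast R_k=\prod_{j\leq k}(1+\tfrac12\cos(n_jx+\psi_j))$, i.e.\ the amplitude drops from $1$ to $\tfrac12$; but $1+\tfrac12\cos Y$ is still a nonconstant nonnegative mean-one random variable, so \eqref{eq:La-bound} applies and yields the theorem. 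This convolution trick is the missing idea in your proposal.
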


Theorem \ref{thm:intro} was proved in \cite{DLNT} in the real case ($E= \R$), {\red with a constant depending on $p$ and the sequence
$(n_j)$,} under the condition $\sum_{k=1}^\infty \frac{n_k}{n_{k+1}} < \infty$ mentioned earlier (again by combining the independent case with the decoupling inequality of Meyer). It is easy to see that the same proof implies that it is also valid for vector-valued coefficients under the weaker condition $\sum_{k=1}^\infty \left(\frac{n_k}{n_{k+1}}\right)^2 < \infty$, which is known as Schneider'condition \cite{Sch}. We do it in the next section for completeness. When $E=\R$  and $p/2$ is an integer, then the condition $n_{k+1}/n_k\geq p+1$ is sufficient.

In general, Theorem \ref{thm:intro} cannot be transferred from the independent case by using some generalization of Schneider's condition: $L^p$ norms of $R_k$ and $\bar{R_k}$ are not equivalent, as we see in the next section. So the core of the proof deals directly with Riesz products on the torus. Many new difficulties appear when compared with the proof for independent frequencies. 

We conclude with  questions: Is the best constant $d_p$ in Theorem \ref{thm:intro} an increasing function? Can it  be chosen so that it does not depend on $p$?
% 2) Does Theorem \ref{thm:lowerL1} hold with $d_1 = 2$? 

The article is organized as follows. First we present those results {\red that may be obtained} as consequences of the i.i.d  case. This concerns the case when Schneider's Condition  $\sum_{k=1}^\infty \left(\frac{n_k}{n_{k+1}}\right)^2 < \infty$ is fulfilled as well as Theorem \ref{thm:lowerL1} concerning $L^1$ norms. The rest of the paper is devoted to the general case. In Section \ref{sec:general-bounds} we give preparatory results. The main section is Section \ref{sec:p-lower}, which is devoted to the proof of the lower estimate for $p>1$. Finally, in Section \ref{sec:upper} we give a proof of the upper bound for $p>1$.

\subsection*{Acknowledgements}

We would like to thank F. Nazarov for stimulating correspondence which encouraged us to continue working on this project. We are also indebted to P.~Ohrysko for a helpful discussion, and to anonymous referees for very helpful reports significantly improving the paper.

\section{The theorem under Schneider's Condition}

The aim of this section is to prove Theorem \ref{thm:intro} under Schneider's Condition, that is, we have the following result.

\begin{prop} \label{prop:schn}
Assume that for each $j\geq 1$ one has $n_{j+1}/n_j \geq 3$ and that, moreover, $\sum \left(\frac{n_j}{n_{j+1}}\right)^2<\infty$. Then the conclusion of Theorem \ref{thm:intro} holds:  for every $p \geq  1$ there are positive constants $c_p, C_p$ depending only on $p$
{\red and the sequence $(n_j)$}, such that  for any vectors $v_0,v_1,\ldots$ in a normed space $(E,\|\cdot\|)$,  the inequalities \eqref{equiv} hold. {\red Moreover, if $\sum \left(\frac{n_j}{n_{j+1}}\right)^2\leq 4/(9\pi^2)$, then constants $c_p,C_p$ do not depend
on the sequence $(n_j)$}.
\end{prop}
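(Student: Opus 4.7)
The plan is to deduce \eqref{equiv} from the i.i.d. estimate \eqref{damek} by means of a vector-valued Meyer/Schneider-type decoupling inequality, which is where the hypothesis $\sum(n_j/n_{j+1})^2<\infty$ enters.

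First I would lift the problem from $\T$ to $\T^N$ by introducing the $E$-valued multilinear polynomial
\[
G(t_1,\ldots,t_N):=\sum_{k=0}^N v_k\prod_{j=1}^k(1+\cos t_j),
\]
whose Fourier spectrum is contained in $\{-1,0,1\}^N$. Evaluating at $t_j=n_jt$ returns $\sum_k v_k R_k(t)$, while evaluating at independent uniform variables $Y_1,\ldots,Y_N$ on $\T$ produces a random element equal in law to $\sum_k v_k\bar R_k$; in particular $\int_{\T^N}\|G\|^p\de m^N=\E\|\sum_k v_k\bar R_k\|^p$. The same reformulation applies to each individual Riesz product: $R_k(t)=H_k(n_1t,\ldots,n_kt)$ with $H_k(t_1,\ldots,t_k):=\prod_{j=1}^k(1+\cos t_j)$, and $\int_{\T^k}H_k^p\de m^k=\E\bar R_k^p$.

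The key step is a vector-valued decoupling: for every $E$-valued trigonometric polynomial $F$ on $\T^N$ whose Fourier spectrum is contained in $\{-1,0,1\}^N$,
\[
\int_\T\|F(n_1t,\ldots,n_Nt)\|^p\de m(t)\;\asymp\;\int_{\T^N}\|F\|^p\de m^N,
\]
with constants depending only on $p$ and on $\sum_j(n_j/n_{j+1})^2$, but not on $N$ or on $E$. The assumption $n_{j+1}/n_j\geq 3$ guarantees that the exponents $\sum\epsilon_j n_j$ are distinct over $\epsilon\in\{-1,0,1\}^N$, so that both sides carry exactly the same Fourier data $\hat F(\epsilon)$. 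Schneider's argument treats the image of Lebesgue measure under $t\mapsto(n_1t,\ldots,n_Nt)$ as an $L^2$-perturbation of Haar measure on $\T^N$, with error controlled by $\sum(n_j/n_{j+1})^2$; since the proof only uses Minkowski's and Jensen's inequalities, it transports verbatim from the scalar case to a general normed space.

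Applying the decoupling with $F=G$ gives $\int_\T\|\sum_k v_kR_k\|^p\de m\asymp\E\|\sum_k v_k\bar R_k\|^p$, and with $F=H_k$ for each $k$ gives $\int_\T R_k^p\de m\asymp\E\bar R_k^p$. Plugging these two equivalences into \eqref{damek} applied to the i.i.d. sequence $\bar X_j=1+\cos Y_j$ then yields both directions of \eqref{equiv}. The main obstacle is the decoupling inequality itself: Meyer's theorem recalled in the introduction gives only one direction and requires the stronger $\sum n_j/n_{j+1}<\infty$, so handling Schneider's weaker quadratic condition, both directions simultaneously, and Banach-space-valued $F$ with constants uniform in $N$ and $E$ requires the full $L^2$ comparison of the Kronecker orbit measure against Haar measure on $\T^N$, together with careful tracking of how the constants depend on $p$ and on $\sum(n_j/n_{j+1})^2$.
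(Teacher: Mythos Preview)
Your overall strategy is the same as the paper's: reduce \eqref{equiv} to the i.i.d.\ estimate \eqref{damek} via a two-sided, vector-valued decoupling inequality comparing $\|\sum_k v_k R_k\|_{L^p(\T,E)}$ with $\|\sum_k v_k \bar R_k\|_{L^p}$ (and likewise for each $R_k$). The paper states this decoupling as a separate proposition and then combines it with \eqref{damek} exactly as you do.

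The gap is in your justification of the decoupling itself. The sentence ``Schneider's argument treats the image of Lebesgue measure under $t\mapsto(n_1t,\ldots,n_Nt)$ as an $L^2$-perturbation of Haar measure on $\T^N$'' does not parse: that pushforward is supported on a one-dimensional orbit in $\T^N$ and is singular with respect to Haar measure, so there is no $L^2$ density to compare, and no ``Minkowski plus Jensen'' argument on densities will give the two-sided $L^p$ comparison you need. This is precisely the obstacle you flag at the end, but nothing in the proposal actually addresses it.

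The paper's mechanism is quite different from what you sketch. It first proves the comparison for $p=\infty$ and \emph{scalar} polynomials, by an iterated Taylor-expansion argument combined with Bernstein's inequality: freezing the top frequency introduces an error of size $(n_{N-1}/n_N)^2$, and the quadratic summability is exactly what makes the infinite product of correction factors converge (this is where Schneider's condition enters, not via any $L^2$ density). The resulting $L^\infty$ equivalence is then upgraded to all $p\in[1,\infty]$ and to arbitrary normed spaces $E$ by a Hahn--Banach/Riesz--Markov transference: the $L^\infty$ bounds let one construct, for each $x\in\T$ and each $y\in\T^N$, bounded measures $\tilde\mu_x$ on $\T^N$ and $\mu_y$ on $\T$ whose Fourier coefficients on $\Lambda_N$ match those of the corresponding Dirac masses, and convolution by these measures transports $L^p(\T,E)$ and $L^p(\T^N,E)$ norms into each other with operator norm controlled by the $L^\infty$ constant. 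This transference step is what makes the argument uniform in $E$ and $p$; it is not a Jensen/Minkowski argument on the level of integrands.
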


{\red To prove this, we proceed as in \cite{La} making a use of Schneider's condition. 
First introduce some notation.} For an arbitrarily large integer $N$, let us denote by $\Lambda_N$ the set of integers that may be written as $\sum_{j=1} ^N\varepsilon_j n_j$, with $\varepsilon_j\in\{-1, 0, 1\}$, for all $j\leq N$. The condition $n_{j+1}/n_j \geq 3$ ensures that  the mapping $T=T_N$ from $\Lambda_N $ to $\mathbb Z^N$ given by $T(\sum_{j=1} ^N\varepsilon_j n_j)=\left(\varepsilon_j\right)_{j=1}^N$ is injective. For a trigonometric polynomial $P(x)=\sum_{n\in\Lambda_N}a_ne^{inx}$  on $\mathbb T$ with values in $E$, we define $\widetilde P(y)=
\sum_{n\in\Lambda_N}a_ne^{iT(n)\cdot y}$, which is a trigonometric polynomial on $\mathbb{T}^N$with values in $E$. The next proposition is a variant of results one can find in Meyer's book \cite{Meyer}, Chapter VIII.

\begin{prop} 
\label{prop:schn2} 
Under the previous assumptions and notations,  there exists  a constant $C$ which depends only on the sequence $(n_j)$ such that  for all $E-$valued trigonometric polynomials $P$ with frequencies in $\Lambda_N$ and all 
$p\in [1,\infty]$, 
\begin{equation} 
\label{schn}
C^{-1}\|\tilde P\|_{L^p(\T^N, E)}\leq \|P\|_{L^p(\T, E)}\leq C\|\tilde P\|_{L^p(\T^N, E)}.
\end{equation}
{\red Moreover, if $\sum \left(\frac{n_j}{n_{j+1}}\right)^2\leq 4/(9\pi^2)$, then one may take $C=2$.}
\end{prop}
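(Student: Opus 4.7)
The plan is to prove \eqref{schn} by averaging the lifting identity $\widetilde P(\Phi(t))=P(t)$ over translations and reducing to a uniform Fourier multiplier bound.

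First I would establish the key identity. Setting $\Phi(t):=(n_1 t,\ldots,n_N t)\in\mathbb{T}^N$ and using that $T(n)\cdot\Phi(t)=nt$ for $n\in\Lambda_N$ by the very definition of $T$, one gets $\widetilde P(\Phi(t))=P(t)$. More generally, for any $\theta\in\mathbb{T}^N$,
\[
\widetilde P(\Phi(t)+\theta)=\sum_{n\in\Lambda_N} a_n e^{iT(n)\cdot\theta}\,e^{int}=:P_\theta(t).
\]
By translation invariance of $\mathbb{T}^N$ together with Fubini,
\[
\|\widetilde P\|_{L^p(\mathbb{T}^N,E)}^p=\int_{\mathbb{T}^N}\|P_\theta\|_{L^p(\mathbb{T},E)}^p\,d\theta.
\]

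Next, I would reduce \eqref{schn} to the uniform estimate $\|P_\theta\|_{L^p(\mathbb{T},E)}\le C\|P\|_{L^p(\mathbb{T},E)}$ for every $\theta\in\mathbb{T}^N$, with $C$ depending only on $(n_j)$. Indeed, integrating in $\theta$ gives the upper half of \eqref{schn}, and the reverse inequality $\|P\|_p\le C\|P_\theta\|_p$ follows by applying the same bound with $-\theta$ in place of $\theta$ (since $(P_\theta)_{-\theta}=P$), which upon integration yields the lower half.

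Finally, the twist $P\mapsto P_\theta$ is a Fourier multiplier that modifies coefficients only on $\Lambda_N$, so it suffices to construct a finite measure $\mu_\theta$ on $\mathbb{T}$ with $\widehat{\mu_\theta}(n)=e^{iT(n)\cdot\theta}$ for $n\in\Lambda_N$: then $P_\theta=P*\mu_\theta$ and Young's inequality gives $\|P_\theta\|_p\le\|\mu_\theta\|_{TV}\,\|P\|_p$, so the task reduces to bounding $\|\mu_\theta\|_{TV}$ uniformly in $N$ and $\theta$. The natural candidate is the signed Riesz-type product
\[
\mu_\theta(dt)=\prod_{j=1}^N\bigl(1+2\cos(n_j t+\theta_j)\bigr)\,dt,
\]
which by the injectivity of $T$ expands to $\sum_{n\in\Lambda_N}e^{iT(n)\cdot\theta}\,e^{int}\,dt$ and hence has exactly the required Fourier coefficients on $\Lambda_N$. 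The main obstacle is controlling its total variation: each factor has $L^1$ norm strictly larger than one, so a priori $\|\mu_\theta\|_{TV}$ could grow with $N$. This is where Schneider's condition $\sum(n_j/n_{j+1})^2<\infty$ enters: following the construction in Meyer's book (Chapter VIII), one modifies the naive product inductively so that each new factor contributes a total-variation error of order at most $(n_j/n_{j+1})^2$, and summability then yields the desired uniform bound $\|\mu_\theta\|_{TV}\le C$ depending only on the sequence $(n_j)$. Verifying this uniform estimate by carefully controlling the cross-terms between factors living at widely separated scales is the crux of the proof.
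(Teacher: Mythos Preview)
Your reduction via the averaging identity $\|\widetilde P\|_{L^p(\mathbb{T}^N,E)}^p=\int_{\mathbb{T}^N}\|P_\theta\|_{L^p(\mathbb{T},E)}^p\,d\theta$ and the observation that everything comes down to producing measures $\mu_\theta$ on $\mathbb{T}$ with $\widehat{\mu_\theta}(n)=e^{iT(n)\cdot\theta}$ on $\Lambda_N$ and uniformly bounded total variation is correct, and this is essentially what the paper does in its final convolution step. The gap is in your construction of $\mu_\theta$: you propose the explicit signed Riesz product $\prod_j(1+2\cos(n_jt+\theta_j))$, acknowledge that its total variation is a priori exponential in $N$, and then assert that by ``following Meyer'' one can modify it so that each step contributes an error of order $(n_j/n_{j+1})^2$. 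But you do not say what this modification is, and the standard direct Riesz-product arguments produce errors of order $n_j/n_{j+1}$ (Meyer's original condition), not its square. Obtaining the square---and hence working under Schneider's weaker hypothesis---is precisely the point that requires a different idea, and you have left that as the ``crux'' without supplying it.

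The paper fills this gap by a detour through $p=\infty$. It first proves the scalar sup-norm equivalence $C_\infty^{-1}\sup|\widetilde P|\le\sup|P|\le\sup|\widetilde P|$ by an iterative argument based on Bernstein's inequality (Lemma~\ref{lm:P123}); the second-order Taylor expansion in that step is exactly what produces the factor $(n_{j-1}/n_j)^2$ and makes Schneider's condition the natural one. Then, rather than building $\mu_\theta$ by hand, the paper uses this $L^\infty$ bound together with Hahn--Banach and the Riesz--Markov representation theorem to obtain the required measures abstractly: for each $y\in\mathbb{T}^N$ one gets $\mu_y\in M(\mathbb{T})$ with $\widehat{\mu_y}(n)=e^{-iT(n)\cdot y}$ on $\Lambda_N$ and $\|\mu_y\|\le C_\infty$ (and symmetrically $\tilde\mu_x\in M(\mathbb{T}^N)$). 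The remaining convolution-and-averaging step is then essentially the same as yours. So your outline is sound up to the point where the real work begins; what is missing is a mechanism---Bernstein plus duality, in the paper's proof---that actually delivers the uniformly bounded measures under Schneider's condition.
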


Proposition \ref{prop:schn2} together with \eqref{damek} easily implies Proposition \ref{prop:schn} {\red (observe that $\widetilde{R_k}$ has the same distribution as $\bar{R}_k$)}. {\red 
%Proposition \ref{prop:schn2} is implicitly proved in the work of Schneider \cite{Sch}. 
We present here its simple and complete proof that is inspired by \cite{Meyer}, Chapter VIII.}

To establish \eqref{schn}, we first consider $p=\infty$ and $E=\er$ and iterate the following simple lemma.

\begin{lem}\label{lm:P123}
Let $P_1,P_2$ and $P_3$ be  trigonometric polynomials of degree at most $d$. For an integer $M>d$, we let 
$$P(x)=P_1(x)+P_2(x)e^{iMx}+P_3(x)e^{-iMx}, \qquad Q(x,y)=P_1(x)+P_2(x) e^{iMy}+P_3(x)e^{-iMy}.$$
Then
$$\sup_{x\in \T}|P(x)|
\geq \left(1-\frac{\pi^2d^2}{2M^2}\right) 
\sup_{x,y\in \T}|Q (x,y)|.
$$
\end{lem}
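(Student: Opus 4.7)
The plan is to locate a point $(x_0,y_0)\in\T^2$ where $|Q|$ attains its supremum and then find $x\in\T$ close to $x_0$ at which $P(x)$ coincides with $Q(x_0,y_0)$ up to an error controlled by the smoothness of the $P_i$.

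First I would fix $(x_0,y_0)$ with $|Q(x_0,y_0)|=\|Q\|_\infty$ and introduce the complex trigonometric polynomial
\[
f(x):=Q(x,y_0)=P_1(x)+P_2(x)e^{iMy_0}+P_3(x)e^{-iMy_0},
\]
which has degree at most $d$ in $x$. Since $\sup_x|f(x)|\le\|Q\|_\infty$ while $|f(x_0)|=\|Q\|_\infty$, one has $\|f\|_\infty=\|Q\|_\infty$ and this norm is attained at $x_0$. The set $\{x\in\T:e^{iMx}=e^{iMy_0}\}$ is a coset of $(2\pi/M)\zet$ in $\T$, so one can pick $x$ with $|x-x_0|\le\pi/M$ and $e^{iMx}=e^{iMy_0}$; for this $x$ the definition of $P$ forces $P(x)=f(x)$. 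Consequently the lemma reduces to the one-variable estimate
\[
|f(x_0+t)|\ge\|f\|_\infty\Bigl(1-\tfrac{d^2t^2}{2}\Bigr)\qquad\text{for }|t|\le\pi/M.
\]

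To prove this, I would multiply $f$ by a unimodular scalar $\alpha$ so that $\alpha f(x_0)=\|f\|_\infty$ (the case $f(x_0)=0$ being trivial). Since $x_0$ is a critical point of $|f|^2=f\bar f$, the relation $(|f|^2)'(x_0)=2\mathrm{Re}[\overline{f(x_0)}f'(x_0)]=0$ gives $\mathrm{Re}[\alpha f'(x_0)]=0$. Taylor's formula with integral remainder then yields
\[
\alpha f(x_0+t)=\|f\|_\infty+t\,\alpha f'(x_0)+\int_0^t(t-s)\alpha f''(x_0+s)\,\de s,
\]
and taking real parts and invoking Bernstein's inequality $\|f''\|_\infty\le d^2\|f\|_\infty$ produces
\[
\mathrm{Re}[\alpha f(x_0+t)]\ge\|f\|_\infty-\tfrac{t^2}{2}\|f''\|_\infty\ge\|f\|_\infty\Bigl(1-\tfrac{d^2t^2}{2}\Bigr).
\]
Since $|f|\ge\mathrm{Re}[\alpha f]$ (and the claim is trivial when the right-hand side is negative), this is exactly the desired one-variable inequality; specialising to $t=x-x_0$ with $|t|\le\pi/M$ gives $\sup_x|P(x)|\ge(1-\pi^2d^2/(2M^2))\|Q\|_\infty$.

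There is no serious obstacle to this program. The one subtle point worth emphasising is that a naive application of Bernstein's inequality to $f'$ would give only a linear error $O(d/M)$; the quadratic error $O(d^2/M^2)$ demanded by the claim comes precisely from exploiting the vanishing of the first derivative of $|f|^2$ at the maximiser $x_0$, and hence from expanding $\alpha f$ (rather than $|f|$) to second order and applying Bernstein's bound to $f''$.
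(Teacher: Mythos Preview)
Your argument is correct and is essentially identical to the paper's: both pick a maximiser $(x_0,y_0)$ of $|Q|$, normalise so that $Q(x_0,y_0)$ is real positive, observe that the $x$-derivative of the real part of $Q(\cdot,y_0)$ vanishes at $x_0$, choose $x_1$ with $|x_1-x_0|\le\pi/M$ and $e^{iMx_1}=e^{iMy_0}$ so that $P(x_1)=Q(x_1,y_0)$, and then Taylor-expand to second order and apply Bernstein to $Q''(\cdot,y_0)$. Your version is slightly more explicit in naming $f=Q(\cdot,y_0)$, noting $\|f\|_\infty=\|Q\|_\infty$, and using the integral remainder, but the substance is the same.
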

\begin{proof} Let $(x_0, y_0)$ be a point where  $|Q|$ reaches its maximum, which we assume to be nonzero. Without loss of generality we may assume that $Q(x_0, y_0)=1$, so that it is also the maximum of its real part. This implies in particular that the derivative in the $x$ variable of its real part vanishes at $(x_0,y_0)$. To conclude it is sufficient to find $x_1\in \T$ such that the real part of $Q(x_0,y_0)-P(x_1)$ is smaller than 
$\frac{\pi^2d^2}{2M^2}$.
We take $x_1\in \T$ to be such that $|x_1-x_0|\leq \pi/M$ and $\exp(iMx_1)=\exp(iMy_0)$. Then by Taylor's expansion 
$$\Re (Q(x_0, y_0)-P(x_1))=\Re (Q(x_0, y_0)-Q(x_1, y_0))\leq \frac{\pi^2}{2M^2}\sup_{x\in \T}|Q''(x, y_0)|,$$ where $Q''$ stands for the second derivative in the $x$ variable. By Bernstein's inequality, this supremum  is bounded by $d^2$, which allows to conclude.
\end{proof}

\begin{cor}
There exists  a constant $C_\infty$ which depends only on the sequence $(n_j)$ such that  for all  trigonometric polynomials $P$ with frequencies in $\Lambda_N$, 
\begin{equation} 
\label{schninfty}
C_\infty^{-1}\sup_{y\in \T^N}|\tilde{P}(y)|\leq \sup_{x\in \T}|P(x)|\leq  \sup_{y\in \T^N}|\tilde{P}(y)|.
\end{equation}
{\red Moreover one may take $C_\infty=2$ if $\sum \left(\frac{n_j}{n_{j+1}}\right)^2\leq 4/(9\pi^2)$.}
\end{cor}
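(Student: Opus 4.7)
The upper bound is immediate: by definition, $\tilde P(n_1 x, \ldots, n_N x) = \sum_{n \in \Lambda_N} a_n e^{inx} = P(x)$, so $\sup_x |P(x)| \leq \sup_y |\tilde P(y)|$. For the lower bound, the plan is to iterate Lemma \ref{lm:P123}, peeling off the frequencies $n_N, n_{N-1}, \ldots$ one at a time. I will introduce auxiliary variables $z_j \in \T$ and define intermediate polynomials
\[
P^{(k)}(x; z_{N-k+1}, \ldots, z_N) := \sum_{\varepsilon} a_\varepsilon e^{i x \sum_{j \leq N-k} \varepsilon_j n_j} \prod_{j > N-k} e^{i \varepsilon_j n_j z_j}, \quad 0 \leq k \leq N,
\]
so that $P^{(0)} = P$ and, because $z_j \mapsto n_j z_j$ is surjective $\T \to \T$, $\sup_{z \in \T^N} |P^{(N)}(z)| = \sup_{y \in \T^N} |\tilde P(y)|$. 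At step $k$ I will group $P^{(k-1)}$ by $\varepsilon_{N-k+1} \in \{-1, 0, 1\}$ to write it as $P_1 + P_2 e^{i n_{N-k+1} x} + P_3 e^{-i n_{N-k+1} x}$ with $P_1, P_2, P_3$ of $x$-degree at most $d_k := \sum_{j \leq N-k} n_j$; applying Lemma \ref{lm:P123} with $M = n_{N-k+1}$ fibre-wise in $(z_{N-k+2}, \ldots, z_N)$ gives
\[
\sup_{x,\, z_{N-k+2}, \ldots, z_N} |P^{(k-1)}| \geq \Bigl(1 - \frac{\pi^2 d_k^2}{2 n_{N-k+1}^2}\Bigr) \sup_{x,\, z_{N-k+1}, \ldots, z_N} |P^{(k)}|.
\]

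The hard part will be controlling the product of the resulting factors uniformly in $N$. Under the bare hypothesis $n_{j+1}/n_j \geq 3$ one only gets $d_k/n_{N-k+1} \leq 1/2$, which makes $1 - \pi^2/8$ negative and the bound vacuous. Schneider's condition rescues the argument: since $\sum_j (n_j/n_{j+1})^2 < \infty$ forces $n_j/n_{j+1} \to 0$, I choose $j_0$ so large that $n_j/n_{j+1} < \sqrt{8}/(3\pi)$ for all $j \geq j_0$. Iterating only over $k = 1, \ldots, N - j_0$ (that is, peeling off $n_{j_0+1}, \ldots, n_N$), every factor is strictly positive because $d_k \leq (3/2) n_{N-k}$, and uniformly in $N$,
\[
\sum_{k=1}^{N-j_0} \frac{d_k^2}{n_{N-k+1}^2} \leq \Bigl(\frac{3}{2}\Bigr)^2 \sum_{j \geq j_0} \Bigl(\frac{n_j}{n_{j+1}}\Bigr)^2 < \infty,
\]
so by the standard infinite-product criterion $\prod_{k=1}^{N-j_0} (1 - \pi^2 d_k^2/(2 n_{N-k+1}^2)) \geq c_0$ for some constant $c_0 > 0$ depending only on the sequence.

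For the remaining $j_0$ peeling steps, which separate the low frequencies $n_1, \ldots, n_{j_0}$, I will invoke the equivalence of norms on a fixed finite-dimensional space. For each choice of $(z_{j_0+1}, \ldots, z_N)$, the slice $x \mapsto P^{(N-j_0)}(x; \cdot)$ is a trigonometric polynomial whose $x$-frequencies lie in the finite set $\Lambda_{j_0}$, and by injectivity of $T_{j_0}$ it is in linear bijection with a trigonometric polynomial in $j_0$ fresh variables built from the same coefficient data. All norms on this fixed finite-dimensional space are equivalent, giving a constant $C_{j_0}$ depending only on $n_1, \ldots, n_{j_0}$ with
\[
\sup_x |P^{(N-j_0)}(x; \cdot)| \geq C_{j_0}^{-1} \sup_{z_1, \ldots, z_{j_0}} |P^{(N)}(z_1, \ldots, z_N)|.
\]
Taking $\sup$ over $(z_{j_0+1}, \ldots, z_N)$ and combining with the partial product yields \eqref{schninfty} with $C_\infty^{-1} := c_0 C_{j_0}^{-1}$, which depends only on $(n_j)$.
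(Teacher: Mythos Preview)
Your proof is correct and follows essentially the same route as the paper. Both iterate Lemma~\ref{lm:P123} to peel off the high frequencies $n_N, n_{N-1}, \ldots, n_{j_0+1}$ one at a time, use Schneider's condition together with the bound $d_k \leq \tfrac{3}{2} n_{N-k}$ to control the infinite product of factors $1 - \tfrac{\pi^2 d_k^2}{2 n_{N-k+1}^2}$ uniformly in $N$, and then dispose of the finitely many low frequencies via the equivalence of norms on the fixed finite-dimensional space of polynomials with spectrum in $\Lambda_{j_0}$.
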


\begin{proof}
Let $P(x) = \sum_{n \in \Lambda_N} a_ne^{inx}$. Here, for convenience, instead of $\tilde P$, we shall consider $\tilde Q(y) =  \sum_{n\in\Lambda_N}a_ne^{i\sum_j \e_jn_jy_j}$, $y \in \T^N$, where $\e = T(n)$. Clearly, $\sup |\tilde Q| = \sup |\tilde P|$. The upper bound is obvious because $\tilde Q(x,x,\ldots, x) = P(x)$.
 
We use Lemma \ref{lm:P123}, with $M=n_N$ and $d=n_1+\ldots+n_{N-1} \leq n_{N-1}\left(1 + \frac{1}{3} + \frac{1}{3^2}+\ldots\right) = \frac{3}{2}n_{N-1}$. It implies that
\begin{align*}
\sup_{x\in \T} |P(x)|  &\geq c_N\sup_{x,y_N\in\T}|\tilde Q(x,\ldots,x,y_N)|,
\end{align*}
where
\[
c_N = 1 - \frac{9\pi^2}{8}\left(\frac{n_{N-1}}{n_N}\right)^2.
\]
For every fixed $y_N$, $\tilde Q(x,\ldots,x,y_N)$ as a function of $x$ is a trigonometric polynomial with frequencies in $\Lambda_{N-1}$ and therefore we can iterate the above argument to obtain
\[
\sup_{x\in \T} |P(x)| \geq c_N\cdot\ldots\cdot c_{N_0} \sup_{x,y_{N_0},\ldots,y_N\in\T}|\tilde Q(x,\ldots,x,y_{N_0},\ldots,y_N)|.
\]
Observe that Schneider's condition implies the existence of 
$N_0$, depending only on the sequence $(n_j)$, such that {\red first, $c_k > 0$ for every $k \geq N_0$ (because necessarily $\frac{n_j}{n_{j+1}} \to 0$ as $j \to \infty$), and second,} $c_N\cdot\ldots\cdot c_{N_0} \geq \frac{1}{2}$ for $N \geq N_0$. Indeed,
$$
\prod_{k=N_0}^N c_k \geq 1 - \frac{9\pi^2}{8}\sum_{k \geq N_0}\left(\frac{n_{k-1}}{n_k}\right)^2$$
since for every real numbers $a_1, \ldots, a_l > -1$ of the same sign, we have $\prod_{i=1}^l (1+a_i) \geq 1 + \sum_{i=1}^l a_i$. Therefore there is $N_0$ depending only on the sequence $(n_j)$ such that for every polynomial $P$, we have
\begin{equation}\label{eq:P-Q-N0}
\sup_{x\in \T} |P(x)| \geq \frac{1}{2} \sup_{x,y_{N_0},\ldots,y_N\in\T}|\tilde Q(x,\ldots,x,y_{N_0},\ldots,y_N)|.
\end{equation}
Now we handle the first $M:=N_0-1$ coordinates. Let $\mathcal{P}_M$ be the space of trigonometric polynomials on $\T^M$ spanned by $\{e^{i\left(\sum_{j\leq M} \e_jn_jy_j\right)}\}_{\e \in \{-1,0,1\}^M}$.
Any two norms on a finite-dimensional space $\mathcal{P}_M$ are comparable, in particular there exists
$\delta>0$ such that
\[
\sup_{x\in \T}|Q(x,\ldots,x)|\geq 
\delta\sup_{(y_1,\ldots,y_{M})\in \T^M}|Q(y_1,\ldots,y_M)|
\quad \mbox{for }Q\in \mathcal{P}_M.
\]
The above bound together with \eqref{eq:P-Q-N0} implies the lower bound in 
\eqref{schninfty}  with $C_\infty = 2\delta^{-1}$.

{\red 
To get the last part of the assertion it suffices to observe that if  
$\sum \left(\frac{n_j}{n_{j+1}}\right)^2\leq \frac{4}{9\pi^2}$ then $c_k>0$ for all $k$ and
\[
\prod_k c_k\geq 1-\frac{9\pi^2}{8}\sum_k \left(\frac{n_{k-1}}{n_{k}}\right)^2\geq \frac12.
\]
}

\end{proof}

\begin{proof}[Proof of Proposition \ref{prop:schn2}]
Let $\mu$ be a bounded measure on $\T$ and $\tilde{E}_N$ be a set of all functions of the form
$\tilde{P}=\sum_{n\in\Lambda_N}a_n e^{iT(n)\cdot y}$ and $a_n\in \er$. We may treat $\tilde E_N$ as a subset of the space of
continuous functions $C(\T^N)$. On $\tilde E_N$ we define a functional $\varphi$ by the formula 
$\varphi(\tilde{P})=\int Pd\mu$. The upper bound in \eqref{schninfty} shows that $\|\varphi\|\leq \|\mu\|_{M(\T)}$.
By the Hahn-Banach theorem we may extend $\varphi$ to $C(\T^N)$ and thus show that there
exists a measure $\tilde{\mu}\in M(\T^N)$ such that $\|\tilde{\mu}\|_{M(\T^N)}\leq \|\mu\|_{M(\T)}$, by the Riesz-Markov-Kakutani representation theorem ($\|\mu\|_{M(\T)}$ is the total variation of $\mu$). Moreover,
$\widehat{\tilde{\mu}}(T(n))=\widehat{\mu}(n)$ for $n\in \Lambda_N$ because
\[
\widehat{\tilde\mu}(T(n)) = \int e^{-iT(n)\cdot y} d\tilde\mu(y) = \tilde\varphi(e^{-iT(n)\cdot y}) = \varphi(e^{-iT(n)\cdot y}) = \int e^{-inx}d\mu(x)= \widehat{\mu}(n).
\]

In the same way we show that for any measure $\tilde{\mu}\in M(\T^N)$, there exists a measure $\mu\in M(\T)$ such that
$\|\mu\|_{M({\red \T})}\leq C_\infty\|\tilde{\mu}\|_{M({\red \T^N})}$ and the previously stated relation holds. Using these observations
for Dirac measures we find for $x\in \T$ and $y\in \T^N$ measures $\tilde{\mu}_x\in M(\T^N)$ and $\mu_y\in M({\red \T})$
such that $\|\tilde{\mu}_x\|\leq 1$, $\|\mu_y\|\leq C_\infty$ and
$\widehat{\tilde{\mu}_x}(T(n))=e^{-inx}$, $\widehat{\mu_y}(n)=e^{-iT(n)\cdot y}$  for $n\in \Lambda_N$.

Fix now a trigonometric $E-$valued polynomial $\tilde{P}=\sum_{n\in\Lambda_N}a_n e^{iT(n)\cdot y}$ and $p\in [1,\infty)$.
Observe that for any $x\in\T$,
\begin{align*}
\|\tilde P\|_{L^p(\T^N, E)}
&=\left\|\sum_{n\in\Lambda_N}a_ne^{inx}e^{iT(n)\cdot y}\ast\tilde{\mu}_x\right\|_{L^p(\T^N, E)}
\\
&\leq \|\tilde{\mu}_x\|_{M(\T^N)}\left\|\sum_{n\in\Lambda_N}a_ne^{inx}e^{iT(n)\cdot y}\right\|_{L^p(\T^N, E)}
\leq \left\|\sum_{n\in\Lambda_N}a_ne^{inx}e^{iT(n)\cdot y}\right\|_{L^p(\T^N, E)}.
\end{align*}
Integrating over $x\in \T$ and changing the order of integration we get
\begin{align*}
\|\tilde P\|_{L^p(\T^N, E)}^p
&\leq \int_{\T^N}\int_{\T}\left\|\sum_{n\in\Lambda_N}a_ne^{inx}e^{iT(n)\cdot y}\right\|^p{\red \de m(x) \de m^{\red N}(y)}.
\end{align*}
However for any $y\in \T^N$
\begin{align*}
\left\|\sum_{n\in\Lambda_N}a_ne^{inx}e^{iT(n)\cdot y}\right\|_{L_p(\T,E)}
=\|P\ast\mu_y\|_{L_p(\T,E)}\leq \|\mu_y\|_{M(\T)}\|P\|_{L_p(\T,E)}\leq C_\infty \|P\|_{L_p(\T,E)}.
\end{align*}

This way we show that $\|\tilde P\|_{L^p(\T^N, E)}\leq C_\infty\|P\|_{L_p(\T,E)}$. The case $p=\infty$ follows by taking the limit.
The upper bound in \eqref{schn} is shown in an analogous way.
\end{proof}

\bigskip

In the rest of this section we discuss the question of generalizing this method to sequences that do not satisfy Schneider's condition. It was observed in \cite{Bonami} Chapter I that, as a consequence of Plancherel's formula, the double inequality \eqref{schn} is valid for $p$ an even integer and $E=\R$ as soon as $n_{j+1}/n_{j}\geq p+1$. It means that the conclusions of Theorem \ref{thm:intro} are also valid in this case for scalar functions.

 For $p/2$ an integer, condition $n_{j+1}/n_{j}\geq p+1$ is  a natural bound  for being able to transfer the result for the independent case to the context of the lacunary sequence $n_j$. This is given by the following lemma. {\red Recall that $\widetilde{R_k}(y_1,\ldots,y_N) = \prod_{j=1}^k(1+\cos(n_jy_j))$ is a polynomial on $\T^N$ (with the same distribution as the random variable $\bar{R_k}$)}.

\begin{lem}
Let $p>2$ be an even integer and $n_k=p^k$. Then $\limsup\|R_k\|_p/\|\widetilde{R_k}\|_p=\infty.$
\end{lem}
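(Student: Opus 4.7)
The plan is to show that the ratio $\|R_k\|_p^p/\|\widetilde{R_k}\|_p^p$ grows linearly in $k$ by exhibiting the arithmetic resonance $p\cdot n_j = n_{j+1}$ that is peculiar to the sequence $n_j = p^j$.

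First I would expand $(1+\cos\theta)^p = 2^p\cos^{2p}(\theta/2)$ in its Fourier series to get
\begin{equation*}
(1+\cos\theta)^p = \sum_{m=-p}^{p} c_m\, e^{im\theta}, \qquad c_m = \frac{1}{2^p}\binom{2p}{p-m},
\end{equation*}
so in particular $c_0$, $c_{\pm 1}$ and $c_{\pm p}$ are all strictly positive. Since $p$ is even, $R_k^p$ and $\widetilde{R_k}^p$ are nonnegative, and since $\widetilde{R_k}(y_1,\ldots,y_k) = \prod_{j=1}^k(1+\cos y_j)$, Fubini immediately gives $\|\widetilde{R_k}\|_p^p = c_0^k$.

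Next, on $\T$ I would expand the product
\begin{equation*}
R_k^p(t) = \prod_{j=1}^k\sum_{a_j=-p}^{p} c_{a_j}\,e^{i a_j n_j t} = \sum_{(a_1,\ldots,a_k)\in\{-p,\ldots,p\}^k} \Big(\prod_{j=1}^k c_{a_j}\Big)\,e^{it\sum_j a_j n_j},
\end{equation*}
and use orthogonality of the characters to obtain $\|R_k\|_p^p = \sum \prod_j c_{a_j}$, the sum being taken over all multi-indices $(a_j)\in\{-p,\ldots,p\}^k$ with $\sum_j a_j n_j = 0$. For $n_j = p^j$, the relation $p\cdot p^j - p^{j+1}=0$ produces, beyond the trivial all-zero solution contributing $c_0^k$, at least $2(k-1)$ further solutions: for each $i\in\{1,\ldots,k-1\}$ and each sign, set $(a_i,a_{i+1})=(\pm p,\mp 1)$ and all other $a_j = 0$. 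Each such multi-index contributes $c_p c_1 c_0^{k-2}>0$.

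Since every $c_m$ is nonnegative, retaining only these terms yields
\begin{equation*}
\frac{\|R_k\|_p^p}{\|\widetilde{R_k}\|_p^p} \geq 1 + 2(k-1)\,\frac{c_p\, c_1}{c_0^2} \xrightarrow[k\to\infty]{}\ \infty,
\end{equation*}
and taking $p$-th roots finishes the proof. The key structural input is the single resonance $p\cdot n_j = n_{j+1}$, which is exactly what fails under the hypothesis $n_{j+1}/n_j\geq p+1$; no step is genuinely difficult, and the only point needing a little care is verifying that the $2(k-1)$ resonant multi-indices are distinct from each other and from the trivial one, which is immediate from their disjoint supports and sign patterns.
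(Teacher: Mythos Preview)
Your proof is correct. Both you and the paper exploit the same arithmetic resonance $p\cdot n_j = n_{j+1}$, but the implementations differ.

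You expand $R_k^p$ completely, identify $\|R_k\|_p^p$ with the zero Fourier coefficient of the product, and exhibit $2(k-1)$ explicit nontrivial multi-indices $(a_j)$ with $\sum a_j p^j = 0$. This is entirely elementary and gives a linear lower bound on $\|R_k\|_p^p/\|\widetilde{R_k}\|_p^p$, hence growth of order $k^{1/p}$ for the norm ratio, which suffices.

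The paper instead groups the factors in pairs: with $q=p/2$ it sets $f(x)=(1+\cos x)^q(1+\cos(px))^q$ and notes that $R_{2k}^p=[f(x)f(p^2x)\cdots f(p^{2k-2}x)]^2$. A general Plancherel inequality for products of polynomials with nonnegative Fourier coefficients then gives $\|R_{2k}\|_p^p\geq\|f\|_2^{2k}$, while $\|\widetilde{R_{2k}}\|_p^p=\|g\|_2^{2k}$ with $g(x,y)=(1+\cos x)^q(1+\cos y)^q$. The single collision $q=q+0\cdot p=-q+1\cdot p$ forces $\|f\|_2>\|g\|_2$, so the ratio of $p$-th powers grows \emph{exponentially}.

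So your argument is shorter and more transparent, while the paper's approach is more structural and yields a stronger quantitative conclusion (exponential versus polynomial growth). For the bare statement $\limsup=\infty$, either route is adequate; your direct counting of resonances is arguably the cleaner way to see why the threshold $n_{j+1}/n_j\geq p+1$ is the natural one.
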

\begin{proof}
This comes from a combinatorial argument. We will use the following fact. For  a sequence of positive integers $q_1, \ldots, q_k$ and a trigonometric polynomial $g$  with nonnegative Fourier coefficients, we have
\begin{equation} \label{larger}
\int_{\T}|g(q_1 x)g(q_2x)\cdots g(q_k x)|^2 dm(x)\geq \|g\|_2^{2k} 
    \end{equation}
    and the inequality is strict if and only if {\red there exist} two different sequences of integers $(m_1, \cdots, m_k)$ and $(m'_1, \cdots, m'_k)$ such that $q_1m_1+ \cdots+q_k m_k=q_1m'_1+ \cdots+q_k m'_k$ while all Fourier coefficients $\widehat g(m_j), \widehat g(m'_j)$ are strictly positive. Indeed, by Plancherel's formula, 
    the inequality \eqref{larger} is equivalent to
   $$ \sum_m\left(\sum_{m_1, \cdots, m_k: \ q_1m_1+ \cdots+q_k m_k=m}\widehat g (m_1)\cdots \widehat g (m_k)\right)^2\geq \sum_{m_1, \cdots, m_k}|\widehat g (m_1)|^2\cdots |\widehat g (m_k)|^2.$$
   This is a direct consequence of the inequality $(\sum_J a_j)^2 \geq \sum_J a_j^2$, while the strict inequality comes from the fact that this last inequality is strict whenever $a_j$'s are positive and $J$ has more than one element.
   
   Let us come back to the proof of the lemma and prove that $\|R_{2k}\|_p/\|\widetilde R_{2k}\|_p$ tends to $\infty$.
If we take $q=p/2$ and
\begin{equation}\label{q-case}
f(x)=(1+\cos(x))^q(1+\cos(p x))^q, \qquad g(x,y)=(1+\cos(x))^q(1+\cos(y))^q,
\end{equation}
then $R_{2k}(x)^p = \Big[f({\red p}x)f({\red p^3}x)\cdot\dots\cdot f(p^{{\red 2k-1} }x)\Big]^2$ and we can use the previous fact to prove that  $\|R_{2k} \|_p^p \geq \left(\int_{\T} f(x)^2 \dd m(x)\right)^k$. Moreover, $\|\widetilde{R_{2k}}\|_p^p = \left(\int_{\T\times \T} g(x,y)^2 \dd m(x) \dd m(y)\right)^{k}$. To prove that  $\|R_{2k}\|^p_p/\|\widetilde{R_{2k}}\|^p_p $ tends to $\infty$, it is sufficient to prove that the $L^2$ norm of $f$ is strictly larger than the norm of $g$, that is, to prove that, at least for one value of $m$, the Fourier coefficient of $\widehat f(m)$ is obtained through different  writings of $m$ as a sum of two frequencies that belong respectively to the two factors. But, for instance, $q=q+0=-q+2q$, which allows to conclude.  
\end{proof}

The previous lemma allows us to find such examples for other values of $p$. Namely
\begin{lem} 
Let $q\geq 4$ be an even integer. Except possibly for a discrete set of values of $p\in (1, \infty)$, there exists a sequence $n_j$ such that $n_{j+1}/n_j\geq q$ for all $j\geq 1 $ and $\|R_k\|_p/\|\widetilde{R_k}\|_p$ does not remain bounded below or above.
\end{lem}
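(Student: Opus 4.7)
I would fix the sequence $n_j = q^j$, which satisfies the lacunarity condition $n_{j+1}/n_j = q$. The object to analyze is the ratio
\[
F_k(p) := \frac{\|R_k\|_p^p}{\|\widetilde{R_k}\|_p^p} = \int_{\T}\prod_{j=1}^k h_p(q^j t)\,dm(t),
\]
where $h_p(x) := (1+\cos x)^p / I_p$ with $I_p := \int_\T (1+\cos x)^p\,dm$, so that $h_p$ is a probability density on $\T$. For each $k$, the map $p \mapsto F_k(p)$ is a positive real-analytic function of $p \in (0,\infty)$ (extending holomorphically to $\{\Re p > 0\}$). Two anchor values frame the analysis: Plancherel combined with the lacunarity $n_{j+1}/n_j \ge 3$ gives $F_k(2) = 1$ for every $k$, whereas the previous lemma applied at the even integer $p = q \ge 4$ yields $F_{2k}(q) \to \infty$ as $k \to \infty$.

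The central idea is to extract an exponential growth rate $\Lambda(p) := \lim_{k\to\infty} F_k(p)^{1/k}$ and exploit its analyticity in $p$. Rewriting $F_k(p) = \int_\T h_p \cdot L_{h_p}^{k-1}(\mathbf 1)\,dm$, where $L_{h_p} f(x) = q^{-1}\sum_{y:\, qy = x} h_p(y) f(y)$ is the weighted Perron--Frobenius operator for the $q$-adic expanding map $T:t\mapsto qt$, the Ruelle--Perron--Frobenius theory for analytic expanding maps applies: $L_{h_p}$ acts compactly on a Hardy-like space of analytic functions on $\T$, has a simple leading eigenvalue $\Lambda(p) > 0$, and $\Lambda$ depends real-analytically on $p \in (0,\infty)$ by Kato's perturbation theory. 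The anchor values translate to $\Lambda(2) = 1$ and $\Lambda(q) > 1$, so $\Lambda$ is non-constant; hence $\{p \in (0,\infty) : \Lambda(p) = 1\}$ is a discrete subset. For every $p \in (1,\infty)$ outside this discrete set, $\Lambda(p) \ne 1$, forcing $F_k(p) \to \infty$ or $F_k(p) \to 0$ exponentially; in either case $\|R_k\|_p / \|\widetilde{R_k}\|_p$ fails to remain between two positive constants.

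The main obstacle is a rigorous justification of the existence, positivity, and real-analyticity of $\Lambda(p)$ throughout $(1,\infty)$, as the Ruelle--Perron--Frobenius machinery has regularity hypotheses that must be verified for the specific potential $h_p$. A self-contained workaround is the Fourier-combinatorial expansion
\[
F_k(p) = \sum_{\vec n \in A_k}\prod_{j=1}^k \hat h_p(n'_j),\qquad A_k = \Big\{\vec n \in \mathbb{Z}^k : \sum_{j=1}^k n'_j q^j = 0\Big\},
\]
in which the ``size-two'' contribution (non-zero $\vec n$ supported on exactly two coordinates) equals $\sum_{d\ge 1}(k-d)\cdot 2\sum_{a\ge 1}\hat h_p(aq^d)\hat h_p(a)$, with leading-$d$ summand $(k-1)\cdot 2\hat h_p(q)\hat h_p(1)$. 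This summand is real-analytic in $p$, strictly positive at $p = q$ (by direct computation from the previous lemma) and zero at $p = 2$ (since $q\ge 3$ implies $\hat h_2(q) = 0$), hence vanishes only on a discrete set of $p$; outside it, the size-two term is non-trivial and---combined with the a priori positivity $F_k > 0$, which rules out exact cancellation by higher-order terms (any such cancellation would be an analytic equation defining yet another discrete set)---forces $\{F_k(p)\}_k$ to be unbounded, completing the argument.
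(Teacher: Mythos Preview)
Your approach has a genuine gap, and you correctly flag it yourself: the Ruelle--Perron--Frobenius picture (a simple, isolated leading eigenvalue $\Lambda(p)$ with spectral gap, depending analytically on $p$) is exactly what you need, but the standard theorems require the weight to be bounded away from zero, equivalently $\log h_p$ bounded. Here $h_p(x)=(1+\cos x)^p/I_p$ vanishes at $x=\pi$, so $\log h_p$ is unbounded below and the off-the-shelf results do not apply. It is plausible that a version of RPF goes through (the zero is isolated and in fact $\mathcal L_{h_p}\mathbf 1$ is strictly positive, since at most one of the $q$ preimages of any point is $\pi$), but establishing a spectral gap and invoking Kato perturbation in this degenerate setting is real work that you have not supplied; without it neither the existence of $\Lambda(p)=\lim F_k(p)^{1/k}$ nor its analyticity is justified.

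Your Fourier ``workaround'' does not close the gap. You isolate the size-two contribution, which grows linearly in $k$, and assert that its non-vanishing forces $\{F_k(p)\}_k$ to be unbounded. But the contributions of size three, four, and so on are also present and can grow with $k$; nothing in your argument controls them. The line ``any such cancellation would be an analytic equation defining yet another discrete set'' is not a proof: the putative cancellation condition depends on $k$, and a countable union of discrete sets need not be discrete. A nonzero linear-in-$k$ term does not by itself prevent the full sum from staying bounded.

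The paper avoids all of this by choosing a different sequence. Instead of $n_j=q^j$, set $n_{2j}=m_j$ and $n_{2j+1}=qm_j$ with $m_j$ increasing so fast that $\sum((q+1)m_j/m_{j+1})^2<\infty$. Then $\|\widetilde{R_{2k}}\|_p^p=\|\tilde P\|_p^{pk}$ exactly, with $\tilde P(x,y)=(1+\cos x)(1+\cos y)$, while a Schneider--Meyer transfer (with the enlarged coefficient set $\{0,\pm1,\ldots,\pm(q{+}1)\}$) gives $\|R_{2k}\|_p^p\asymp\|P\|_p^{pk}$ for $P(x)=(1+\cos x)(1+\cos qx)$. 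The previous lemma's computation shows $\|P\|_q\neq\|\tilde P\|_q$, so the single real-analytic function $p\mapsto\|P\|_p^p-\|\tilde P\|_p^p$ is not identically zero and therefore vanishes only on a discrete set. For all other $p$ the ratio $\|R_{2k}\|_p/\|\widetilde{R_{2k}}\|_p$ tends to $0$ or $\infty$. The analyticity input here is elementary---a fixed integral, not a spectral limit---and no transfer-operator theory is needed.
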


\begin{proof}
We consider the two quantities $\|P\|_p^p$ and $ \|\widetilde{P}\|_p^p$ , where $P$ and $\tilde P$ are the trigonometric polynomials of degree  $q+1$ {\red and $2$}, respectively on $\T$ and $\T^2$, defined by 
$$
P(x)=(1+\cos(x))(1+\cos(q x)), \qquad \tilde P(x,y)=(1+\cos(x))(1+\cos(y)).
$$ 
We have seen in the proof of the previous {\red lemma} that 
$\|P\|_p^p$ and $ \|\widetilde{P}\|_p^p$ differ for $p=q$. So they differ except on a discrete set of values (this is because $\|P\|_p^p$ as a function of $p$ is analytic). Let $p$ be such an exponent and let us construct a sequence $n_j$ that satisfies the conclusions of the lemma. We let $n_{2j}=m_j$ and $n_{2j+1}=qm_j$, where the sequence $m_j$ increases sufficiently rapidly so that  $\sum \left(\frac{(q+1)m_j}{m_{j+1}}\right)^2<\infty$. The $L^p(\T^{2k})$ norm of  $\widetilde R_{2k}$ is easily seen to be the $k$-th power  of the norm of $\tilde P$.  We use for $P$ the analog of Proposition \ref{prop:schn}, but with the set $\Lambda_N$ defined with $(\varepsilon_j)_{j=1}^N$ such that $\varepsilon_j\in\{0, \pm 1, \cdots \pm (q+1) \}.$  We deduce that the $L^p(\T)$ norm of $R_{2k}$ is 
up to a multiplicative {\red constant} comparable with the $k$-th  power  of the norm of $P$.  The conclusion that $\|R_k\|_p/\|\widetilde{R_k}\|_p$ does not remain bounded below or above follows at once.
\end{proof}

The last lemma shows that in general Theorem \ref{thm:intro} cannot be deduced from the independent case. We will see that it is nevertheless the case for $p=1$, which is not contradictory since the $L^1$ norms of $R_k$ and $\widetilde R_k$ are all equal to $1$.

\section{Lower bound for $p=1$}\label{sec:p=1}

\begin{proof}[Proof of Theorem \ref{thm:lowerL1}]
We assume $n_{j+1}/n_j \geq 3$. Then the Fourier expansion of a Riesz product $\prod_{j=1}^k (1+\cos(n_j x))$ has $3^k$ distinct terms. For a sequence $\psi = (\psi_1,\psi_2,\ldots)$, consider the Riesz product
\[
P_\psi(x) = \prod_{j=1}^\infty \left(1 +\cos(n_jx + \psi_j)\right).
\]
Let
\[
\widetilde{R_k}(\psi,x) = (P_\psi * R_k)(x),
\]
{\red where $*$ denotes the convolution on $\T$.}
Then
\begin{equation}
\label{eq:convoltrick}
\int_{\T} \left\|\sum_{j=0}^N v_j\widetilde{R_{\red j}}(\psi,x) \right\| \dd m(x) \leq \int_{\T} \left\|\sum_{j=0}^N v_jR_j(x)\right\| \dd m(x).
\end{equation}
On the other hand, 
\[
\widetilde{R_k}(\psi,x) = \prod_{j=1}^k \left(1 +\frac{1}{2}\cos(n_jx + \psi_j)\right),
\]
{\red which can be verified by comparing Fourier coefficients, 
\begin{align*}
\widehat{{\widetilde{R_k}}}(\psi,\cdot)(n)
&=\widehat{{P}_\psi}(n)\widehat{{R}_k}(n)=
\left\{
\begin{array}{ll}
2^{-\sum_{j=1}^k|\ve_j|}e^{i\sum_{j=1}^k\ve_j\psi_j}2^{-\sum_{j=1}^k|\ve_j|}
&\mbox{if }n=\sum_{j=1}^k \ve_j n_j,
\\
0& \mbox{if }n\notin \Gamma_k
\end{array}
\right.
\\
&=\left[\prod_{j=1}^k \left(1 +\frac{1}{2}\cos(n_jx + \psi_j)\right)\right]^{\wedge}{\phantom{a}}(n).
\end{align*}
}
We integrate both sides of \eqref{eq:convoltrick} against $\dd m(\psi)$ and exchange integration. On the left hand side we have an i.i.d. sequence (with respect to $\psi$)
$1 +\frac{1}{2}\cos(n_jx + \psi_j)$ (observe also that the  distribution does not depend on $x$), 
which satisfies conditions of the main theorem of \cite{La}. 
So we get  the desired lower bound. {\red Specifically, we use Theorem 3 from \cite{La} with the i.i.d. sequence $X_i = 1  + \frac{1}{2}\cos(2\pi U_i)$ with $U_i$ being i.i.d. uniform $[0,1]$ r.v.s for which we can take therein $\lambda = \frac{99}{100}$, $A = \frac{3}{2}$, $\mu = \frac1\pi$, $k = 2000$, hence the bound $c_1 > 3.1\cdot 10^{-8}$ (to obtain the bound on $\lambda$, we use $\sqrt{1+x}\leq 1+x/2-x^2/12$, $x \in [-1,1]$).}
\end{proof}

Such techniques involving  Riesz products $P_\psi$ have been already used in \cite{Bonami}. Unfortunately the same argument based on transferring the i.i.d. case from \cite{DLNT} does not seem to work for $L^p$ bounds with $p > 1$. Indeed, the lower bound involves the quantity $\left(\int_\T \left(1 +\frac{1}{2}\cos(t)\right)^p\dd m(t)\right)^k$, which is off by an exponential factor (in $k$).

\section{Auxiliary general results}\label{sec:general-bounds}

We give here elementary or standard results, which will be our tools in the main proofs.

The following simple result will lie in the heart of our induction procedure to obtain the bound below. It is basically \cite[Lemma 9]  {DLNT}.

\begin{lem}
\label{lem:Lp5}
Let $\mu$ be a measure on $X$ and let $f,g\colon X\rightarrow E$ be measurable functions. Suppose that for some $p> 1$ and $\gamma > 0$, we have
\[
\int_X\|g\|^{p-1}\|f\|\de\mu\leq \gamma \int_X \|f\|^p\de\mu.
\] 
Then,
\[
\int_X\|f+g\|^p\de\mu\geq \left(\frac{1}{3^p}-2p\gamma\right)\int_X\|f\|^p\de\mu+\int_X\|g\|^p\de\mu.
\]
\end{lem}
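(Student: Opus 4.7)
My plan is to reduce the lemma to a single pointwise inequality, namely
\[
\|f+g\|^p \;\geq\; \frac{1}{3^p}\|f\|^p \;+\; \|g\|^p \;-\; 2p\,\|g\|^{p-1}\|f\|,
\]
valid on all of $X$. Once this is established, integrating against $\mu$ and applying the hypothesis
$\int\|g\|^{p-1}\|f\|\de\mu\le\gamma\int\|f\|^p\de\mu$ yields the desired inequality immediately.

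To prove the pointwise bound I will split at each point of $X$ into two cases according to whether $\|g\|\le\tfrac{1}{3}\|f\|$ or $\|g\|>\tfrac{1}{3}\|f\|$. In the first case, the triangle inequality gives $\|f+g\|\geq\|f\|-\|g\|\geq\tfrac{2}{3}\|f\|$, hence $\|f+g\|^p\ge(2/3)^p\|f\|^p$; using $\|g\|^p\le(1/3)^p\|f\|^p$ and the elementary inequality $2^p\ge 2$ (valid for $p\geq 1$), this is at least $\tfrac{1}{3^p}\|f\|^p+\|g\|^p$, which is stronger than what is needed (the negative correction term is harmless since it is nonpositive).

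In the second case $\|g\|>\tfrac{1}{3}\|f\|$, I will use the scalar convexity inequality $(x-y)^p\ge x^p-p\,x^{p-1}y$ for $x\ge y\ge 0$ (an immediate consequence of the fundamental theorem of calculus applied to $t\mapsto t^p$). Combined with the triangle inequality $\|f+g\|\ge \|g\|-\|f\|$ when $\|g\|\ge\|f\|$, and with the trivial observation that $\|g\|^p-p\|g\|^{p-1}\|f\|\le 0$ otherwise, this gives in both subcases
\[
\|f+g\|^p \;\geq\; \|g\|^p - p\,\|g\|^{p-1}\|f\|.
\]
To recover the $\tfrac{1}{3^p}\|f\|^p$ term, I use the hypothesis of this case: $\|g\|>\tfrac{1}{3}\|f\|$ implies $\|f\|^p\le 3^{p-1}\|g\|^{p-1}\|f\|$, so $\tfrac{1}{3^p}\|f\|^p\le\tfrac{1}{3}\|g\|^{p-1}\|f\|$. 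Adding and subtracting $\tfrac{1}{3^p}\|f\|^p$ and collecting the correction terms yields the target inequality with constant $p+\tfrac{1}{3}\le 2p$ (for $p\ge 1$).

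The main obstacle — really the only nonroutine point — is finding the correct split threshold and packaging the two regimes into one uniform pointwise inequality; once the threshold $\|g\|=\tfrac{1}{3}\|f\|$ is chosen, the combinatorics of constants matches the stated $\tfrac{1}{3^p}$ and $2p$ exactly, and the integration step is then a one-line application of the assumed bound on $\int\|g\|^{p-1}\|f\|\de\mu$.
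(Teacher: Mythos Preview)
Your proof is correct and follows essentially the same approach as the paper: both split at the threshold $\|g\|=\tfrac{1}{3}\|f\|$, use $\|f+g\|^p\ge \tfrac{1}{3^p}\|f\|^p+\|g\|^p$ on the first set and $\|f+g\|^p\ge \|g\|^p-p\|g\|^{p-1}\|f\|$ on the second, then absorb the missing $\tfrac{1}{3^p}\|f\|^p$ on the second set via $\tfrac{1}{3^p}\|f\|^p\le \tfrac{1}{3}\|g\|^{p-1}\|f\|$ and bound $p+\tfrac{1}{3}\le 2p$. The only difference is organizational: you package everything into a single pointwise inequality before integrating, whereas the paper carries indicator functions through the integrals; the content is identical.
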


\begin{proof}
For any real numbers $a,b$ we have $|a+b|^p \geq |a|^p -p|a|^{p-1}|b|$. If, additionally, $|a| \leq \frac{1}{3}|b|$,
then $|a+b| \geq |b|-|a| \geq |a|+ \frac13|b|$ and thus $|a+b|^p \geq |a|^p + \frac{1}{3^p} |b|^p$. Taking $a= \|g\|$, $b=-\|f\|$ and using the inequality 
$\|f+g\| \geq |\|f\|-\|g\||$, we obtain
\begin{align*}
\int_X \|f+g\|^p \dd \mu 
& = \int_X\|f+g\|^p \ind_{\{\|g\| \leq \frac{1}{3}\|f\|\}} \dd \mu + \int_X \|f+g\|^p \ind_{\{\|g\| > \frac{1}{3} \|f\|\}} \dd \mu 
\\
& \geq  \int_X \|g\|^p \ind_{\{\|g\| \leq \frac{1}{3} \|f\|\}} \dd \mu+ \frac{1}{3^p} \int_X \|f\|^p\ind_{\{\|g\|\leq\frac{1}{3}\|f\|\}} \dd \mu
\\ 
& \qquad + \int_X \|g\|^p\ind_{\{\|g\|>\frac{1}{3} \|f\|\}} \dd \mu - p  \int_X \|g\|^{p-1}\|f\|\ind_{\{\|g\|>\frac{1}{3}\|f\|\}} \dd \mu
\\
& =  \int_X \|g\|^p \dd \mu + \frac{1}{3^p} \int_X \|f\|^p(1-\ind_{\{\|g\| > \frac{1}{3} \|f\|\}}) \dd \mu
- p \int_X \|g\|^{p-1} \|f\| \ind_{\{\|g\| > \frac{1}{3} \|f\|\}} \dd \mu.
\end{align*}
Note that
\[
\int_X \left(\frac{1}{3^p}\|f\|^p + p \|g\|^{p-1} \|f\|\right) \ind_{\{\|g\| > \frac{1}{3} \|f\|\}} \dd \mu 
\leq \left(\frac{1}{3} + p \right) \int_X \|g\|^{p-1} \|f\| \dd \mu \leq 2p  \gamma \int_X \|f\|^p \dd \mu . 
\]
Therefore,
\[
\int_X \|f+g\|^p \dd \mu \geq \int_X \|g\|^p \dd \mu + \frac{1}{3^p} \int_X \|f\|^p \dd \mu - 2p \gamma \int_X \|f\|^p \dd \mu.
\] 
\end{proof}

The next lemma gives a comparison between explicit constants that we will need.
\begin{lem}
\label{lem:varphikp}
For $k,p\geq 1$,
\begin{equation}
\label{eq:varphikp}
\int_{\te}|\cos(t)|^{2p}|\sin(t)|^{2kp}\de m\leq \frac{1}{kp+1}\int_{\te} |\cos(t)|^{2p}\de m\int_{\te}|\sin(t)|^{2kp}\de m. 
\end{equation}
\end{lem}

\begin{proof}
%%\cancel{All integrals are equal to beta functions.}
{\red 
We have
\[
\int_{\te}|\cos(t)|^{\alpha}|\sin(t)|^{\beta}\de m=
\frac{2}{\pi}\int_0^{\pi/2}\cos^{\alpha}(t)\sin^{\beta}(t)\de t=
\frac{1}{\pi}B\left(\frac{\alpha+1}{2},\frac{\beta+1}{2}\right)=\frac{\Gamma(\frac{\alpha+1}{2})\Gamma(\frac{\beta+1}{2})}
{\pi\Gamma(\frac{\alpha+\beta}{2}+1)},
\]
}
so the ratio between the left and the right hand sides {\red of \eqref{eq:varphikp}} is equal to 
\begin{align*}
\frac{\Gamma(p+1)\Gamma(kp+1)}{\Gamma(kp+p+1)}
&=\frac{p\Gamma(p)\Gamma(kp+1)}{\Gamma(kp+p+1)}
=p\int_0^1x^{p-1}(1-x)^{kp}\de x
\\
&
\leq p\int_0^1x^{p-1}\de x\int_0^1(1-x)^{kp}\de x
= \frac{1}{kp+1},
\end{align*}
where we have used the continuous version of Chebyshev's sum inequality.
\end{proof}

\begin{comment}

\begin{lem}
\label{twoint}
For any integers $n,k \geq 1$, we have
\[
\int_{\te}(1-\cos(nt))^k\de m=2^{-k}\binom{2k}{k}
\]
and
\[
\int_{\te}(1-\cos(nt))^k(1+\cos(nt))\de m=2^{-k}\binom{2k}{k}\frac{1}{k+1}.
\]
\end{lem}

\begin{proof}
We have 
\begin{align*}
\int_{\te}(1-\cos(nt))^k\de m&=\int_{\te}(2\sin^2(nt/2))^k\de m=
2^k\int_{\te}\sin^{2k}\! t\de m
\\
&=2^{-k}\int_{\te}(-1)^k(e^{it}-e^{-it})^{2k}\de m=2^{-k}\binom{2k}{k},
\end{align*}
where the last equality follows from $\int_{\te}e^{ilt}\de m=\delta_{0,l}$.
Similar calculations show that
\begin{align*}
\int_{\te}(1-\cos(nt)&)^k(1+\cos(nt))\de m
=2^{k+1}\int_{\te}\sin^{2k}\! t\cos^2\! t\de m
\\
&=2^{-k-1}\int_{\te}(-1)^k(e^{it}-e^{-it})^{2k}(2+e^{2it}+e^{-2it})\de m
\\
&=2^{-k-1}\Big[2\binom{2k}{k}-\binom{2k}{k-1}-\binom{2k}{k+1}\Big]=2^{-k}\binom{2k}{k}\frac{1}{k+1}.
\end{align*}

\end{proof}
\end{comment}

Our next lemma concerns exact algebraic factorization for integrals of products of trigonometric polynomials and is also standard. {\red (As a side clarifying remark, since functions on $\T$ may be treated as $2\pi$-periodic functions on $\R$, in the next 3 lemmas, when we say ``a function on $\T$'', we implicitly mean, ``a $\T$-periodic function'')}

\begin{lem}
\label{intprod}
Suppose that $g_1,\ldots,g_{N-1}$ are trigonometric polynomials of degree at most $d$,
$g_N$ is an arbitrary continuous function on $\te$ and $n_{j+1}/n_j\geq d+1$ for $j \geq 1$.
Then
\[
\int_{\te}\prod_{j=1}^N g_j(n_jt)\de m=\prod_{j=1}^N \int_{\te}g_j(n_jt)\de m.
\]
\end{lem}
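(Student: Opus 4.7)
\smallskip

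\noindent\textbf{Proof proposal.} The plan is to expand everything in Fourier series and show that the lacunarity hypothesis $n_{j+1}/n_j\ge d+1$ rules out all nontrivial frequency resonances, so that only the constant term survives upon integration.

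First, by the Stone--Weierstrass theorem, one can approximate $g_N$ uniformly on $\T$ by trigonometric polynomials; since both sides of the claimed identity depend continuously on $g_N$ in the sup-norm, I may assume $g_N$ is itself a trigonometric polynomial (of some possibly large degree $D$). Writing $g_j(t)=\sum_k \widehat{g_j}(k)e^{ikt}$, with $\widehat{g_j}(k)=0$ for $|k|>d$ when $j<N$ and for $|k|>D$ when $j=N$, one expands
\[
\prod_{j=1}^N g_j(n_jt)=\sum_{k_1,\ldots,k_N}\Big(\prod_{j=1}^N\widehat{g_j}(k_j)\Big)e^{i(k_1n_1+\cdots+k_Nn_N)t}.
\]
Integrating against $dm$, only terms with $k_1n_1+\cdots+k_Nn_N=0$ contribute. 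Note also that $\int_\T g_j(n_jt)\,dm=\widehat{g_j}(0)$, since $t\mapsto n_jt$ preserves $m$, so the right-hand side of the lemma is precisely $\prod_j\widehat{g_j}(0)$.

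The crux is therefore the following combinatorial claim: if $|k_j|\le d$ for $j<N$, $k_N\in\zet$ is arbitrary, and $\sum_{j=1}^N k_jn_j=0$, then all $k_j$ vanish. To prove it, I use the lacunary geometric bound
\[
\sum_{j=1}^{N-1} n_j \le n_{N-1}\sum_{i=0}^{N-2}(d+1)^{-i}<n_{N-1}\cdot\frac{d+1}{d},
\]
which gives $\sum_{j<N}|k_j|n_j<(d+1)n_{N-1}\le n_N$. Hence $|k_N|n_N<n_N$, forcing $k_N=0$. The remaining equation $\sum_{j<N}k_jn_j=0$ with $|k_j|\le d$ is then handled by the same argument applied to the largest index $J\le N-1$ with $k_J\ne 0$, yielding a contradiction unless all $k_j=0$.

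Once the claim is established, only the term $k_1=\cdots=k_N=0$ survives and the identity follows. The only nontrivial step is the lacunarity estimate above, and that is immediate from the geometric-series bound; I do not anticipate any real obstacle.
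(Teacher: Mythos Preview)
Your argument is correct and follows essentially the same route as the paper's proof: expand in Fourier series and use the lacunarity bound $d\sum_{j<J}n_j<(d+1)n_{J-1}\le n_J$ to see that $\sum_j k_jn_j=0$ forces all $k_j=0$. You are in fact more careful than the paper, which leaves implicit both the lacunarity estimate and the passage from continuous $g_N$ to trigonometric polynomials (your Stone--Weierstrass reduction is fine, though one could also note directly that $\int_\T e^{imt}g_N(n_Nt)\,\de m$ vanishes unless $n_N\mid m$, avoiding any approximation).
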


\begin{proof}
Indeed the left hand side is the sum of products of Fourier coefficients $\widehat{g_j}(l_j)$, with  
 $\sum_{j=1}^Nl_jn_j$=0, $|l_j|\leq d$ for $j\leq N-1$. This only occurs when all 
 $l_j$ are zero, which allows to conclude.
\end{proof}

\begin{comment}
We start this section by showing that Riesz products (or more general products of trigonometric polynomials
rescaled by a sequence $(n_j)_{j\geq 1}$) behave in a sense like products of independent random variables. The main difficulty is that $X_k$ are \emph{not} independent, but once the sequence of phases $(n_k)$ is lacunary with a large enough ratio $d$, they become \emph{almost} independent thanks to various factorising properties which we shall now establish.
\end{comment}

Even if exact factorization does not hold, one can establish approximate factorization in the presence of a highly oscillating factor. This idea is quantified in the following lemma.

\begin{lem}
\label{correlation}
Suppose that $f$ is a Lipschitz function on $\T$ and $g$ is an integrable  function on $\te$. Then for any integer $n \geq 1$, we have
\[
\Big|\int_{\te} f(t)g(nt)\de m-\int_{\te}f\de m\int_{\te}g(nt)\de m\Big|\leq 
\frac{2\pi}{n}\int_{\te}|f'(t)|\de m\int_{\te}|g(nt)|\de m.
\] 
\end{lem}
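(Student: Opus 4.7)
The plan is to decompose $\T$ into the $n$ arcs on which $g(n\cdot)$ is fully periodic, replace $f$ by its average on each arc to get an exactly factorizing ``main term,'' and control the resulting error by the Lipschitz constant of $f$ in integrated ($L^1$) form.

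More precisely, let $I_k=[2\pi k/n,2\pi(k+1)/n)$ for $k=0,1,\ldots,n-1$, so that $g(n\cdot)$ is $2\pi/n$-periodic and the change of variables $u=nt-2\pi k$ gives $\int_{I_k}|g(nt)|\,dt=\frac{2\pi}{n}\int_{\T}|g|\,dm$, independent of $k$ (and likewise for $g$ in place of $|g|$). Set $\bar f_k=\frac{n}{2\pi}\int_{I_k}f$. Splitting
\[
\int_{\T} f(t)g(nt)\,dm=\frac{1}{2\pi}\sum_{k=0}^{n-1}\int_{I_k}\bar f_k\,g(nt)\,dt+\frac{1}{2\pi}\sum_{k=0}^{n-1}\int_{I_k}(f(t)-\bar f_k)g(nt)\,dt,
\]
the first (main) sum collapses, using the $k$-independence above and $\sum_k \bar f_k=\frac{n}{2\pi}\int_0^{2\pi}f=n\int_\T f\,dm$, to exactly $\int_\T f\,dm\cdot\int_\T g(nt)\,dm$. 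This leaves only the error term.

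For the error, the standard ``average-of-differences'' trick gives, for $t\in I_k$,
\[
|f(t)-\bar f_k|=\Bigl|\tfrac{n}{2\pi}\int_{I_k}\!\!\int_s^t f'(u)\,du\,ds\Bigr|\le \int_{I_k}|f'(u)|\,du,
\]
which is the key step---using $\|f'\|_\infty|I_k|$ would only recover a bound in the $L^\infty$-norm of $f'$, not the $L^1$-norm required. Inserting this into the error sum and using the computed value of $\int_{I_k}|g(nt)|\,dt$,
\[
\text{error}\le \frac{1}{2\pi}\sum_{k=0}^{n-1}\int_{I_k}|f'|\cdot\int_{I_k}|g(nt)|\,dt=\frac{1}{2\pi}\cdot\frac{2\pi}{n}\int_\T|g|\,dm\cdot\int_0^{2\pi}|f'|\,du,
\]
which equals $\frac{2\pi}{n}\int_\T|f'|\,dm\int_\T|g(nt)|\,dm$ after recalling $\int_\T|g(nt)|\,dm=\int_\T|g|\,dm$. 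Combining the two parts yields the claim.

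The only place requiring care is the Lipschitz estimate: one must integrate rather than take a sup to arrive at the $L^1$-type bound stated. Everything else is bookkeeping using the $2\pi/n$-periodicity of $g(n\cdot)$.
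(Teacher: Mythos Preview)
Your proof is correct and follows essentially the same approach as the paper. Both arguments partition $\T$ into the $n$ arcs $I_k$ of length $2\pi/n$, exploit the periodicity of $g(n\cdot)$ to identify the main term, and bound the error via the key Lipschitz estimate $\sup_{s,t\in I_k}|f(t)-f(s)|\le \int_{I_k}|f'|$; the only cosmetic difference is that you center $f$ (subtract its average $\bar f_k$), whereas the paper centers $g$ (subtracts $\int_\T g(ns)\,dm$), and these yield the same double integral $\frac{1}{|I_k|}\int_{I_k\times I_k}(f(t)-f(s))g(nt)\,dt\,ds$.
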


\begin{proof}
Let $I_k=[\frac{k}{n}2\pi,\frac{k+1}{n}2\pi]$ for $k=0,1,\ldots,n-1$. Observe that 
for any $k$, $\int_\T g(nt)\de m=\frac{1}{|I_k|}\int_{I_k}g(nt)\de t$, hence
\begin{align*}
\Big|\int_{I_k}f(t)\Big(&g(nt)-\int_{\te}g(ns)\de m(s)\Big)\de t\Big|
=\frac{1}{|I_k|}\Big|\int_{I_k\times I_k}(f(t)-f(s))g(nt)\de t\de s\Big|
\\
&\leq \sup_{t,s\in I_k}|f(t)-f(s)|\int_{I_k}|g(nt)|\de t
\leq \int_{I_k}|f'(u)|\de u\int_{I_k}|g(nt)|\de t
\\
&=\frac{2\pi}{n}\int_{I_k}|f'(u)|\de u\int_{\te}|g(nt)|\de m.
\end{align*}
Summing the above estimate over $0\leq k\leq n-1$ yields the lemma.
\end{proof}

In the context of trigonometric polynomials, in the above lemma we can pass from the bound in terms of $f'$ to the bound in terms of the original factor $f$. Namely, we have the following lemma. Its first part is the classical Bernstein inequality for vector valued trigonometric polynomials.

\begin{lem}
\label{lem:Lp1}
Suppose that $f$ is a vector-valued trigonometric polynomial of order at most $d$.
Then
\begin{equation}
\label{eq:BernLp}
\int_{\te}\|f'\|^p\de m\leq d^p\int_{\te}\|f\|^p\de m.
\end{equation}
Moreover, for any integrable (complex valued)  function $h$ on $\te$, we have
\begin{equation}
\label{eq:Lpcorr}
\left|\int_{\te}\|f(t)\|^ph(nt)\de m-\int_{\te}\|f\|^p\de m\int_{\te}h(nt)\de m\right|\leq 
2\pi \frac{pd}{n}\int_{\te}\|f\|^p\de m\int_{\te}|h(nt)|\de m.
\end{equation}
\end{lem}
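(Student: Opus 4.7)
My plan for \eqref{eq:BernLp} is to invoke M.~Riesz's interpolation formula: any (scalar or vector-valued) trigonometric polynomial $f$ of degree $\leq d$ can be written
\[
f'(t) = \sum_{k=1}^{2d} \lambda_k\,f(t+\tau_k),
\]
for explicit real $\lambda_k,\tau_k$ with $\sum_{k=1}^{2d}|\lambda_k|=d$; equivalently, there is a signed measure $\mu$ on $\T$ of total variation $d$ such that $f'=\mu\ast f$ for every such $f$. The triangle inequality then gives $\|f'(t)\| \leq \sum_k|\lambda_k|\,\|f(t+\tau_k)\|$; after normalizing by $d$, the right-hand side is a convex combination, so Jensen's inequality applied to $x\mapsto x^p$ yields $\|f'(t)\|^p \leq d^{p-1}\sum_k|\lambda_k|\,\|f(t+\tau_k)\|^p$. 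Integrating in $t$ and using translation-invariance of $m$ finishes this step.

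For \eqref{eq:Lpcorr}, the strategy is to apply Lemma \ref{correlation} to the scalar function $\phi(t):=\|f(t)\|^p$ in place of $f$. I first note that $t\mapsto \|f(t)\|$ is absolutely continuous with a.e.\ derivative of absolute value $\leq \|f'(t)\|$, which follows from
\[
\bigl|\|f(t)\|-\|f(s)\|\bigr|\leq \|f(t)-f(s)\| \leq \int_s^t \|f'(u)\|\,\de u.
\]
Composing with the $C^1$ map $x\mapsto x^p$ on $[0,\infty)$ then shows $\phi$ is absolutely continuous with $|\phi'(t)|\leq p\,\|f(t)\|^{p-1}\,\|f'(t)\|$ a.e. Lemma \ref{correlation} thus reduces matters to bounding $\int_\T|\phi'|\de m$ by $pd\int_\T\|f\|^p\de m$.

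This last bound follows by H\"older's inequality with conjugate exponents $p/(p-1)$ and $p$ applied to the pointwise product $\|f\|^{p-1}\cdot\|f'\|$, together with \eqref{eq:BernLp}:
\[
\int_\T p\,\|f\|^{p-1}\|f'\|\de m \leq p\Big(\int_\T\|f\|^p\de m\Big)^{1-1/p}\Big(\int_\T\|f'\|^p\de m\Big)^{1/p}\leq pd\int_\T\|f\|^p\de m.
\]
I do not expect any serious obstacle; the only care needed is the chain-rule identification $\phi'=p\|f\|^{p-1}(\|f\|)'$ when $p$ is not an integer, which is immediate since $f$ is a smooth bounded polynomial and $x\mapsto x^p$ is $C^1$ on $[0,\infty)$ for $p\geq 1$.
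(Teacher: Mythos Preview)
Your proposal is correct and follows essentially the same route as the paper: both invoke the Riesz interpolation identity $f'(t)=\sum_{k=1}^{2d} b_k f(t+t_k)$ with $\sum|b_k|=d$ for \eqref{eq:BernLp}, and both derive \eqref{eq:Lpcorr} by applying Lemma~\ref{correlation} to $\phi=\|f\|^p$, bounding $\int|\phi'|$ via H\"older and \eqref{eq:BernLp}. The only cosmetic difference is that for \eqref{eq:BernLp} the paper uses Minkowski's inequality in $L^p$ directly to get $\|f'\|_p\le\sum|b_k|\,\|f\|_p=d\|f\|_p$, whereas you pass through the pointwise Jensen bound before integrating.
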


\begin{proof}
Formula (3.11) in \cite[Chapter X]{Z} gives $f'(t)=\sum_{k=1}^{2d} b_kf(t+t_k)$, where $\sum_{k=1}^{2d} |b_k|=d$ and $t_k=\frac{1}{d}(k-\frac{1}{2})\pi$. Thus {\red $\|f'(t)\|\leq \sum_{k=1}^{2d}|b_k|\|f(t+t_k)\|$, so the triangle inequality for the $L_p$ norm gives,}
$\|f'\|_p\leq \sum_{k=1}^{2d}|b_k|\|f\|_p=d\|f\|_p$
and \eqref{eq:BernLp} follows.

To show \eqref{eq:Lpcorr}, take $g=\|f\|^p$. Then $|g'|\leq p\|f\|^{p-1}\|f'\|$ ($g$ is in fact almost everywhere differentiable) and  
\[
\int_{\te}|g'|\de m\leq p\left(\int_{\te}\|f\|^p\de m\right)^{(p-1)/p}
\left(\int_{\te}\|f'\|^p\de m\right)^{1/p}\leq pd \int_{\te}\|f\|^p\de m,
\]
by H\"older's inequality and estimate \eqref{eq:BernLp}.
Thus  Lemma \ref{correlation} yields \eqref{eq:Lpcorr}.
\end{proof}

\begin{lem}
\label{lem:Lp2}
Let $f_1$ and $f_2$ be vector-valued  trigonometric polynomials of degree at most $d$. 
Then for $n\geq 3d$, we have
\[
\int_{\te}\|f_1 + f_2\cos(nt)\|^p\de m \geq \frac{1}{3^p}\int_\te \|f_2\|^p\de m.
\]
\end{lem}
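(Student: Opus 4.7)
The plan is to reduce to a scalar inequality via a norming functional and then exploit the rapid oscillation of $\cos(nt)$ relative to the slowly varying trigonometric polynomials $f_1,f_2$. The constant $3^{-p}$ in the conclusion strongly suggests that the argument mirrors the triangle-inequality case split already used in Lemma \ref{lem:Lp5}.

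First I would choose, for each $t\in\T$, a (measurable) norming functional $\xi(t)\in E^{*}$ with $\|\xi(t)\|_{*}=1$ and $\xi(t)(f_{2}(t))=\|f_{2}(t)\|$. Since $\|v\|\geq|\xi(v)|$, one has
\[
\|f_1(t)+f_2(t)\cos(nt)\|\geq |\alpha(t)+\beta(t)\cos(nt)|,
\qquad
\alpha(t):=\xi(t)(f_1(t)),\;\;\beta(t):=\|f_2(t)\|,
\]
with $|\alpha(t)|\le\|f_1(t)\|$ and $\beta\ge0$. It thus suffices to prove the scalar bound $\int_{\T}|\alpha+\beta\cos(nt)|^{p}\,dm\ge 3^{-p}\int_{\T}\beta^{p}\,dm$.

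Next I would introduce the good set $S=\{t:|\alpha(t)|\le \tfrac23\,\beta(t)|\cos(nt)|\}$. On $S$ the reverse triangle inequality yields pointwise $|\alpha+\beta\cos(nt)|\geq \tfrac13\,\beta|\cos(nt)|$, and hence
\[
\int_{\T}|\alpha+\beta\cos(nt)|^{p}\,dm \;\ge\; \frac{1}{3^{p}}\int_{S}\beta^{p}|\cos(nt)|^{p}\,dm.
\]
On the complement $S^{c}$ one has $\beta^{p}|\cos(nt)|^{p}\le(3/2)^{p}\|f_1\|^{p}$, which pays for the missing mass in $S^{c}$ in terms of $\int\|f_1\|^{p}\,dm$.

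To pass from the weighted lower bound $3^{-p}\int\beta^{p}|\cos(nt)|^{p}\,dm$ to the unweighted $3^{-p}\int\beta^{p}\,dm$, I would use the shift $t\mapsto t+\pi/n$, which sends $\cos(nt)\mapsto-\cos(nt)$, and combine the pointwise convexity inequality $|u+v|^{p}+|u-v|^{p}\ge 2|v|^{p}$ (valid for $p\ge1$) with translation invariance of the integral. Bernstein's inequality (\ref{eq:BernLp}) then controls the Lipschitz variation of $f_{1},f_{2}$ under this shift: $\|f_{j}(\cdot+\pi/n)-f_{j}(\cdot)\|_{p}\le (\pi d/n)\|f_{j}\|_{p}$, which is at most $(\pi/3)\|f_{j}\|_{p}$ since $n\ge 3d$.

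The main obstacle is precisely that at the boundary $n=3d$ this Bernstein bound is of size $O(1)$ rather than $o(1)$, so the shift does not yield a negligible error and one cannot directly use Lemma \ref{correlation} or \eqref{eq:Lpcorr} to factorize $\int\beta^{p}|\cos(nt)|^{p}\,dm$ as approximately $\int\beta^{p}\,dm\cdot\int|\cos|^{p}\,dm$. I would overcome this by tracking the good-set bound, the bad-set bookkeeping, and the shift error simultaneously, exactly in the spirit of the case analysis in the proof of Lemma \ref{lem:Lp5}: the factor $\tfrac13$ appearing in the definition of $S$ together with the $n\ge 3d$ hypothesis is just tight enough so that the three error terms assemble to give the clean constant $3^{-p}$.
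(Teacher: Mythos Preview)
Your proposal has a genuine gap, and the paper takes an entirely different (and much shorter) route.

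The core problem is that your scheme produces error terms proportional to $\int\|f_1\|^p$, and nothing in the statement lets you absorb them. On your bad set $S^c$ you only get $\int_{S^c}\beta^p|\cos(nt)|^p\le (3/2)^p\int\|f_1\|^p$; combined with the good-set bound this yields at best
\[
I\ \ge\ 3^{-p}\int_{\T}\beta^p|\cos(nt)|^p\,dm\;-\;3^{-p}(3/2)^p\!\int_{\T}\|f_1\|^p\,dm,
\]
and $\|f_1\|_p$ can be arbitrarily large compared with $\|f_2\|_p$. The shift argument has the same defect: after $t\mapsto t+\pi/n$ you must replace $f_j(t+\pi/n)$ by $f_j(t)$, and Bernstein only gives $\|f_j(\cdot+\pi/n)-f_j\|_p\le \tfrac{\pi d}{n}\|f_j\|_p\le \tfrac{\pi}{3}\|f_j\|_p$, an $O(1)$ error again proportional to $\|f_1\|_p$. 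A further issue is that your norming functional $\xi(t)$ is merely measurable, so $\alpha(t)=\xi(t)(f_1(t))$ is not a trigonometric polynomial and Bernstein no longer applies to it; you cannot run the shift argument at the scalar level. Your last paragraph acknowledges the obstacle but does not actually resolve it; ``just tight enough'' is an assertion, not a computation.

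The paper's proof avoids all of this by a single Fourier-analytic stroke: take the de la Vall\'ee Poussin kernel $V_d$ (degree $2d-1$, $\widehat{V_d}(j)=1$ for $|j|\le d$, $\|V_d\|_1\le 3/2$) and set $g(t)=2e^{int}V_d(t)$. Because $n\ge 3d$ separates the spectra, convolution with $g$ kills $f_1$ and the $e^{-int}f_2$ part while returning $e^{int}f_2$ exactly, so $e^{int}f_2=(f_1+f_2\cos(n\cdot))*g$. Young's inequality then gives $\|f_2\|_p\le\|g\|_1\,\|f_1+f_2\cos(n\cdot)\|_p\le 3\,\|f_1+f_2\cos(n\cdot)\|_p$, which is the claim. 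No case analysis, no Bernstein error, and $\|f_1\|_p$ never appears.
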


\begin{proof}
This is an easy consequence of the use of de la Vall\'ee Poussin kernel $V_d$ (see, e.g. \cite[2.13, p. 16]{Kat}). {\red $V_{d-1}$} is a trigonometric polynomial of degree $2d-1$ with Fourier coefficients between $-d$ and $d$ equal to $1$. The $L^1$ norm of $V_{\red d-1}$ is bounded by $3/2$.  If $g(t)= 2e^{int} V_{\red d-1}(t)$, then $e^{int}f_2$ coincides  with the convolution of $f_1+f_2 \cos(nt)$ with $g$ {\red (this is where we need $n\geq 3d$). The result follows from
\[
\int_{\te} \|f_2\|^p\de m = \int_{\te}\|(f_1 + f_2\cos(nt))*g\|^p \de m \leq \|2V_{d-1}\|_{L_1(\te)}^p\int_{\te} \|f_1 + f_2\cos(nt)\|^p \de m ,
\]
where the last estimate is justified by Young's inequality.}
\end{proof}

\begin{comment}
Recall that our basic factorisation lemma (Lemma \ref{}) says that if $f$ is a real-valued Lipschitz function on $\T$ and $g$ is a real-valued measurable function on $\mathbb{T}$, then for any $n = 1, 2, \ldots$,
\begin{equation}
\label{eq:factor_general}
\left|\int_\T f(t)g(nt) - \int_\T f \int_\T g(nt) \right| \leq \frac{2\pi}{n}\int_\T\big|f'\big| \int_\T\big|g(nt)\big|.
\end{equation}
In particular, if $W$ is a nonnegative trigonometric polynomial, then for $p \geq 1$, \begin{equation}
\label{eq:factor_poltrig}
\left|\int_\T W(t)^pg(nt) - \int_\T W^p \int_\T g(nt)\right| \leq \frac{2\pi p\deg W}{n}\int_\T W^p \int_\T |g(nt)| .
\end{equation}
Indeed, by Zygmund's inequality $\int_\T |W'|^p \leq (\deg W)^p \int_\T |W|^p$, so by H\"older's inequality,
\[ 
\int_\T|(W^p)'| = p\int_\T W^{p-1}|W'| \leq p\left(\int_\T W^p\right)^{(p-1)/p}\left(\int_\T |W'|^p\right)^{1/p} \leq p(\deg W) \int_\T W^p
 \]
and applying \eqref{eq:factor_general} for $f = W^p$ finishes the argument. This applied to $g = \cos$, combined with the fact that $R_k$ is even yield the following simple corollary.
\end{comment}

%======================================================================================================

\section{Lower bound for  $p>1$} \label{sec:p-lower} 

This section is devoted to the proof of the left hand side inequality in Theorem \ref{thm:intro}. Remark first that Lemma \ref{lem:Lp2} applied with {\red $f_1 = \sum_{k=0}^{N-1} v_kR_k + v_NR_{N-1}$ and $f_2=v_NR_{N-1}$ and a simple inequality $\|R_{N-1}\cos(n_Nt)\|_p \leq \|R_{N-1}\|_p$  yield
$$
 \|v_N\| \|R_N\|_p = \|v_N\| \|R_{N-1} + R_{N-1}\cos(n_Nt)\|_p  \leq {\red 2}\|v_N\| \|R_{N-1}(t)\|_p\leq 6\Big\|\sum_{k=0}^N v_kR_k\Big\|_{L^p(\T, E)}
$$}
under the condition that $n_{k+1}\geq 4 n_k$.
But we are far from having the possibility of an induction from this. Our first step will concern this inequality, but for a family of weighted measures on the torus.

Let $\varphi_k(t)=(\frac{1-\cos t}{2})^k$. For $k,l\geq 1$, we say that a function $g$ on $\te$ belongs
to family of weights $\calf^p_{k,l}$ if it has the form
\[
g(t):=\prod_{j=1}^l h_j(n_jt),\quad \mbox{ where }
h_j\in \left\{1,\frac{1}{2}\varphi_k^p,1-\frac{1}{2}\varphi_k^p\right\} \mbox{ for }j=1,\ldots,l.
\]
We also set $\calf^p_{k,0}:=\{1\}$. With a slight abuse of notation we will say that a measure
$\mu$ on $\te$ belongs to  $\calf^p_{k,l}$ if it has the form $\de\mu=g\de m$ for some $g\in \calf^p_{k,l}$.

We will approximate these weights by trigonometric polynomials. We start with the next lemma, which is
a rather standard application of Bernstein polynomials. We prove it for completeness.

\begin{lem}
\label{lem:approx1}
Let $p> 1$ and $f_p(t)=(1-\frac{1}{2}t^p)^{1/p}$, $t\in [0,1]$. For any $\ve>0$, there exists a polynomial
$w_{\ve,p}$ of degree at most $\lceil 4\ve^{-2}\rceil$ such that 
\[
f_p(t)\leq w_{\ve,p}(t)\leq (1+\ve)f_p(t)\quad \mbox{for } t\in [0,1].
\]
\end{lem}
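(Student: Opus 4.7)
The plan is to take $w_{\varepsilon,p}$ to be a small upward shift of the Bernstein polynomial of $f_p$. With $n := \lceil 4\varepsilon^{-2}\rceil$, I would set
\[
w_{\varepsilon,p}(t) := B_n(f_p)(t) + \frac{\varepsilon}{4}, \qquad B_n(f_p)(t) := \sum_{k=0}^{n}\binom{n}{k}t^k(1-t)^{n-k}f_p(k/n),
\]
which is manifestly a polynomial of degree at most $n$. The first step is to check that $f_p$ is $1$-Lipschitz on $[0,1]$: a direct calculation gives $f_p'(t)=-\tfrac{1}{2}t^{p-1}(1-\tfrac{1}{2}t^p)^{1/p-1}$, and for $p\geq 1$ the factor $(1-\tfrac{1}{2}t^p)^{1/p-1}$ is bounded on $[0,1]$ by $2^{1-1/p}\leq 2$ while $t^{p-1}\leq 1$, so $|f_p'(t)|\leq 1$.

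With the Lipschitz bound in hand, the classical error estimate for Bernstein polynomials---via Cauchy--Schwarz applied to $B_n(f_p)(t)-f_p(t)=\sum_k \binom{n}{k}t^k(1-t)^{n-k}(f_p(k/n)-f_p(t))$ and the second-moment identity $\sum_{k}\binom{n}{k}t^k(1-t)^{n-k}(k/n-t)^2 = t(1-t)/n$---yields
\[
|B_n(f_p)(t)-f_p(t)|\leq \sqrt{\tfrac{t(1-t)}{n}}\leq \tfrac{1}{2\sqrt{n}}\leq \tfrac{\varepsilon}{4} \quad\text{for all } t\in[0,1],
\]
by our choice of $n$. Consequently $w_{\varepsilon,p}(t)-f_p(t)\in \bigl[0,\tfrac{\varepsilon}{2}\bigr]$, which already gives the lower inequality $f_p\leq w_{\varepsilon,p}$.

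For the upper inequality I would use the pointwise lower bound $f_p(t)\geq f_p(1)=2^{-1/p}\geq \tfrac{1}{2}$, valid for $p\geq 1$, to convert the additive error into a multiplicative one:
\[
w_{\varepsilon,p}(t)\leq f_p(t) + \tfrac{\varepsilon}{2} = f_p(t)\Bigl(1 + \tfrac{\varepsilon}{2f_p(t)}\Bigr) \leq (1+\varepsilon)f_p(t).
\]
Every step is a standard computation and I foresee no real obstacle; the only bookkeeping issue is to fit the three constants---the Lipschitz bound $1$, the Bernstein error $\tfrac{1}{2\sqrt{n}}$, and the uniform lower bound $\tfrac{1}{2}$ on $f_p$---together so that the degree comes out at most $\lceil 4\varepsilon^{-2}\rceil$, which is precisely what the choice $n = \lceil 4\varepsilon^{-2}\rceil$ delivers.
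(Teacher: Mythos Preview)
Your proof is correct and essentially identical to the paper's: both use the Bernstein polynomial of $f_p$ with $n=\lceil 4\varepsilon^{-2}\rceil$, the $1$-Lipschitz bound on $f_p$, the Cauchy--Schwarz estimate $|B_n(f_p)-f_p|\le \tfrac{1}{2\sqrt{n}}$, and the lower bound $f_p\ge 2^{-1/p}\ge\tfrac12$ to turn the additive error into a multiplicative one. The only cosmetic difference is that the paper shifts by $\tfrac{1}{2\sqrt{n}}$ while you shift by $\tfrac{\varepsilon}{4}$; since $\tfrac{1}{2\sqrt{n}}\le\tfrac{\varepsilon}{4}$, this changes nothing.
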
 

\begin{proof}
We have $|f_p'(t)|=\frac{1}{2}t^{p-1}(1-\frac{1}{2}t^p)^{1/p-1}\leq 2^{-1/p}\leq 1$, so $f_p$ is 
$1$-Lipschitz.
Let $S_{n,t}$ have the binomial distribution with parameters $n$ and $t$ and define $\tilde{w}_{n,p}(t):=\Ex f_p(\frac{1}{n}S_{n,t})$. Then $\tilde{w}_{n,p}$ is a polynomial of degree at most $n$ and
\begin{align*}
|\tilde{w}_{n,p}(t)-f_p(t)|
&\leq \Ex \left|f_p\left(\frac{1}{n}S_{n,t}\right)-f_p(t)\right|
\leq \Ex \left|\frac{1}{n}S_{n,t}-t\right|\leq \frac{1}{n}\left(\Ex|S_{n,t}-nt|^2\right)^{1/2}
\\
&=\frac{1}{n}\sqrt{nt(1-t)}\leq \frac{1}{2\sqrt{n}}.
\end{align*}

Define $w_{\ve,p}=\tilde{w}_{n,p}+\frac{1}{2\sqrt{n}}$, where $n=\lceil 4\ve^{-2}\rceil$. Observe that
\[
f_p(t)\leq w_{\ve,p}(t)\leq f_p(t)+\frac{1}{\sqrt{n}}\leq f_p(t)+\frac{\ve}{2}\leq (1+\ve)f_p(t).
\]
\end{proof}

Let us now approximate the weights by trigonometric polynomials.

\begin{lem}
\label{lem:approx2}
Suppose that $n_{j+1}/n_j\geq 8$ for all $j \geq 1$ and let $k\geq 1$, $l \geq 0$. Then for any $g\in \calf_{k,l}^p$, there
exists a trigonometric polynomial $h$ of degree at most $C_1(p)n_l k$ such that $g\leq h^p\leq 2g$.
\end{lem}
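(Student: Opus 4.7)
The plan is to take $p$-th roots, approximate each factor of $g^{1/p}$ separately by a nonnegative trigonometric polynomial with a small multiplicative error from above, and then multiply the factors back together. Writing $g^{1/p}(t)=\prod_{j=1}^l h_j(n_jt)^{1/p}$, I seek for each $j$ a nonnegative trigonometric polynomial $H_j$ with $h_j^{1/p}\leq H_j\leq (1+\varepsilon_j)h_j^{1/p}$ for some $\varepsilon_j\geq 0$ to be chosen. For $h_j=1$ take $H_j=1$ (degree $0$, $\varepsilon_j=0$); for $h_j=\tfrac12\varphi_k^p$ take $H_j=2^{-1/p}\varphi_k$, which is exact and of degree $k$; for $h_j=1-\tfrac12\varphi_k^p$ write $h_j^{1/p}=f_p(\varphi_k)$ with $\varphi_k\in[0,1]$ a trigonometric polynomial of degree $k$, and invoke Lemma \ref{lem:approx1} to obtain a polynomial $w_{\varepsilon_j,p}$ of degree at most $\lceil 4\varepsilon_j^{-2}\rceil$ with $f_p\leq w_{\varepsilon_j,p}\leq(1+\varepsilon_j)f_p$ on $[0,1]$; then $H_j:=w_{\varepsilon_j,p}(\varphi_k)$ is a nonnegative trigonometric polynomial of degree at most $k\lceil 4\varepsilon_j^{-2}\rceil$.

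Setting $h(t):=\prod_{j=1}^l H_j(n_jt)$ gives a nonnegative trigonometric polynomial. The lower bound $g\leq h^p$ is immediate from $H_j\geq h_j^{1/p}\geq 0$, and the upper bound $h^p\leq 2g$ reduces to $\prod_{j=1}^l(1+\varepsilon_j)^p\leq 2$, i.e.\ $p\sum_j\log(1+\varepsilon_j)\leq \log 2$. I will take $\varepsilon_j:=\alpha\cdot 2^{-(l-j)}$ with $\alpha:=\log 2/(2p)$ (for every $j$, the zero-error cases being harmless). Then $p\sum_j\varepsilon_j\leq 2p\alpha=\log 2$, giving the multiplicative bound. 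For the degree, using $\deg H_j\leq k(4\alpha^{-2}4^{l-j}+1)$ together with the lacunarity bound $n_j\leq n_l\cdot 8^{-(l-j)}$,
\[
\deg h\leq\sum_{j=1}^l n_j\deg H_j\leq kn_l\left(4\alpha^{-2}\sum_{j=1}^l 2^{-(l-j)}+\sum_{j=1}^l 8^{-(l-j)}\right)\leq kn_l\bigl(8\alpha^{-2}+\tfrac{8}{7}\bigr),
\]
which gives the required bound with $C_1(p)=O(p^2)$.

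The main obstacle is balancing the errors: since $l$ is unbounded, the $\varepsilon_j$ must be summable, but the degree contribution $4\varepsilon_j^{-2}n_j$ then blows up at the high-frequency end if the decay of $\varepsilon_j$ is not tuned against the lacunarity. Choosing $\varepsilon_j$ to decay geometrically in $l-j$ with ratio $1/2$ works precisely because the lacunarity ratio $8$ beats the $4$ arising from squaring $\varepsilon_j^{-1}$; this is the single place where the hypothesis $n_{j+1}/n_j\geq 8$ is used, and where the constant $C_1(p)$ inherits its quadratic dependence on $p$ via $\alpha^{-2}$.
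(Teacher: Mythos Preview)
Your argument is correct and is essentially the same as the paper's own proof: you take $p$-th roots factor by factor, handle the factors $1$ and $\tfrac12\varphi_k^p$ exactly, approximate $(1-\tfrac12\varphi_k^p)^{1/p}=f_p(\varphi_k)$ via Lemma~\ref{lem:approx1} with geometrically decaying errors $\varepsilon_j\sim 2^{j-l}$, and then balance the resulting degree $\lceil 4\varepsilon_j^{-2}\rceil n_j k\sim 4^{l-j}n_j k$ against the lacunarity $n_j\leq 8^{-(l-j)}n_l$ to get $\deg h\leq C_1(p)n_l k$ with $C_1(p)=O(p^2)$. The paper chooses $\varepsilon_j=\frac{\ln 2}{p}2^{j-l-1}$ and arrives at the constant $C_1(p)=\frac{64p^2}{\ln^2 2}$, which differs from yours only by numerical factors.
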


\begin{proof}
There exist disjoint $I_1,I_2\subset \{1,\ldots,l\}$ such that
\[
g:=2^{-|I_1|}\prod_{j\in I_1}\varphi_{k}^p(n_jt)\prod_{j\in I_2}\left(1-\frac{1}{2}\varphi_{k}^p(n_jt)\right).
\]
Let $\ve_j:=\frac{\ln 2}{p}2^{j-l-1}$ for $j\in I_2$ and
\[
h:=2^{-\frac{|I_1|}{p}}\prod_{j\in I_1}\varphi_{k}(n_jt)\prod_{j\in I_2}w_{\ve_j,p}(\varphi_k(n_jt)),
\]
where $w_{\ve_j,p}$ are polynomials given by Lemma \ref{lem:approx1}. Then $h$ is a trigonometric polynomial
of degree at most
\[
\mathrm{deg}(h)\leq \sum_{j\in I_1}n_j k+\sum_{j\in I_2}\lceil 4\ve_j^{-2}\rceil n_j k
\leq \frac{8p^2}{\ln^2 2}\sum_{j=1}^l 4^{l+1-j}n_j k\leq \frac{64p^2}{\ln^2 2}n_l k.
\]
Moreover,
\[
g\leq h^p\leq g\prod_{j\in I_2}(1+\ve_j)^p\leq e^{p\sum_{j\in I_2}\ve_j}g
\leq e^{\ln 2\sum_{j=1}^l 2^{j-l-1}} g\leq 2g.
\]
\end{proof}

The following lemma will comprise a first step in our main inductive argument.

\begin{lem}
\label{lem:Lp3}
Suppose that $k \geq 1$, $l \geq 0$ and $n_{j+1}/n_j\geq C_3(p)k$ for $j \geq 1$.
Then for any $\mu\in \calf_{k,l}^p$ and any vectors $v_0,\ldots,v_{l+1}$ in a normed space $(E,\|\cdot\|)$, we have
\[
\int_{\te}\Big\|\sum_{j=0}^{l+1}v_jR_j\Big\|^p\de\mu\geq c_3(p)\|v_{l+1}\|^p\int_{\te}R_{l+1}^p\de\mu.
\]
\end{lem}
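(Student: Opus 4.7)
The plan is to isolate the contribution of the last term $v_{l+1}R_{l+1}$ by exploiting the very high oscillation of $\cos(n_{l+1}t)$ (the mode $n_{l+1}$ is by assumption much larger than any frequency appearing in the preceding sum), after first replacing the non-polynomial density of $\mu$ by a smooth polynomial proxy. Concretely, I would write $R_{l+1}=R_l(1+\cos(n_{l+1}t))$ so that
\[
F := \sum_{j=0}^{l+1}v_jR_j = f + v_{l+1}R_l\cos(n_{l+1}t), \qquad f := \sum_{j=0}^{l}v_jR_j + v_{l+1}R_l,
\]
where both $f$ and $v_{l+1}R_l$ are $E$-valued trigonometric polynomials of degree at most $\deg R_l\leq \tfrac32 n_l$.

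Next I would use Lemma \ref{lem:approx2} to replace the density $g$ of $\mu$ by a scalar nonnegative trigonometric polynomial $h$ of degree at most $C_1(p)kn_l$ satisfying $g\leq h^p\leq 2g$. This lets me dominate the weighted integral by a genuine $L^p$ norm on $(\T,m)$ of a trigonometric polynomial:
\[
\int_\T \|F\|^p\de\mu \;\geq\; \tfrac12\int_\T \|F\|^p h^p\de m \;=\; \tfrac12\int_\T \|Fh\|^p\de m.
\]
The product $Fh$ decomposes as $Fh = fh + v_{l+1}R_l h\cos(n_{l+1}t)$, with $fh$ and $v_{l+1}R_l h$ trigonometric polynomials of degree at most $D(p)kn_l$ where $D(p):=\tfrac32 + C_1(p)$. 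Setting $C_3(p):=3D(p)$ ensures $n_{l+1}\geq 3D(p)kn_l$, which is exactly the hypothesis for Lemma \ref{lem:Lp2}. Applying it yields
\[
\tfrac12\int_\T \|Fh\|^p\de m \;\geq\; \frac{\|v_{l+1}\|^p}{2\cdot 3^p}\int_\T R_l^p h^p\de m \;\geq\; \frac{\|v_{l+1}\|^p}{2\cdot 3^p}\int_\T R_l^p\de \mu,
\]
where the last step uses $h^p\geq g$.

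To finish, I would compare $R_l^p$ with $R_{l+1}^p$ by the trivial pointwise bound $R_{l+1}^p = R_l^p(1+\cos(n_{l+1}t))^p\leq 2^p R_l^p$, which gives $\int_\T R_l^p\de\mu \geq 2^{-p}\int_\T R_{l+1}^p\de\mu$. Chaining the three inequalities yields the conclusion with, e.g., $c_3(p)=(2\cdot 6^p)^{-1}$.

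I do not anticipate a genuine obstacle here: the only conceptual move is recognizing that the polynomial approximation of the weight (Lemma \ref{lem:approx2}) converts a weighted integral against $\mu$ into a standard $L^p$ integral of a bona fide trigonometric polynomial, whose degree is controlled linearly in $kn_l$, so that the largeness assumption $n_{l+1}/n_l\geq C_3(p)k$ feeds directly into Lemma \ref{lem:Lp2}. The crude comparison between $R_l^p$ and $R_{l+1}^p$ is all that is needed on the right-hand side since we only require a constant $c_3(p)$ depending on $p$.
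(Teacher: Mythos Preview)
Your proof is correct and follows essentially the same route as the paper: approximate the density by $h^p$ via Lemma \ref{lem:approx2}, decompose $Fh=fh+v_{l+1}R_lh\cos(n_{l+1}t)$, bound the degrees by a constant times $kn_l$, and apply Lemma \ref{lem:Lp2}. The only difference is that you spell out the final passage from $\int_\T R_l^p\,\de\mu$ to $\int_\T R_{l+1}^p\,\de\mu$ via the pointwise bound $R_{l+1}\le 2R_l$, whereas the paper leaves this (and the resulting value of $c_3(p)$) implicit in the phrase ``the assertion easily follows by Lemma \ref{lem:Lp2}.''
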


\begin{proof}
We may assume that $C_3(p)\geq 8$.
Let $g=\frac{\de\mu}{\de m}$ and $h$ be a trigonometric polynomial given by Lemma \ref{lem:approx2}. We have
\[
\int_{\te}\Big\|\sum_{j=0}^{l+1}v_jR_j\Big\|^p\de\mu
\geq \frac{1}{2}\int_{\te}\Big\|\sum_{j=0}^{l+1}v_jR_j\Big\|^ph^p\de m.
\]
Observe that
\[
\sum_{j=0}^{l+1}v_jR_jh=fh + v_{l+1}\cos(n_{l+1}t)R_lh,
\]
where $f$ is a vector-valued trigonometric polynomial. Moreover,
\[
\max\{\deg(R_l h),\deg(fh)\}\leq \deg(h)+\sum_{j=1}^l n_j\leq (C_1(p)+2)n_l k
\]
and the assertion easily follows by Lemma \ref{lem:Lp2}.
\end{proof}

\begin{lem}
\label{lem:Wei}
For any $p>1$, there exists a real polynomial $w_p$ such that
$x^{p-1}\leq w_p^p(x)$ for $x\in [0,2]$ and
\[
\lambda_1(p):=\frac{\int_{\te} w_p^p(X_1)\de m}{\left(\int_{\te} X_1^p\de m\right)^{(p-1)/p}}<1.
\]  
\end{lem}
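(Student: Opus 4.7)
The plan is to construct $w_p$ by polynomially approximating the function $\phi(x):=x^{(p-1)/p}$ from above, and to exploit the strict Jensen inequality applied to $X_1 = 1 + \cos(n_1 t)$, which is a non-constant continuous function on $\T$ taking values in $[0,2]$.

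First I would record the ``ideal'' bound. Applying Jensen's inequality to the strictly convex function $y \mapsto y^{p/(p-1)}$ (recall $p>1$, so $p/(p-1)>1$) and to the non-constant function $Y = X_1^{p-1}$ yields
\[
\left(\int_\T X_1^{p-1} \, \de m\right)^{p/(p-1)} < \int_\T \left(X_1^{p-1}\right)^{p/(p-1)}\, \de m = \int_\T X_1^p \, \de m,
\]
with strict inequality because $X_1$ is not a.s.\ constant. Raising to the power $(p-1)/p$ gives
\[
\int_\T X_1^{p-1}\, \de m \ < \ \left(\int_\T X_1^p\, \de m\right)^{(p-1)/p}.
\]
So the target ratio is strictly less than $1$ when $w_p^p(x)$ is replaced by $x^{p-1}$ itself; the remaining task is to realise this limit by an actual polynomial.

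Next I would approximate. The function $\phi(x) = x^{(p-1)/p}$ is continuous on $[0,2]$, so by the Weierstrass approximation theorem, for any $\eta > 0$ there exists a polynomial $q_\eta$ with $\sup_{x\in[0,2]}|q_\eta(x)-\phi(x)| \leq \eta$. Set $w_p(x) := q_\eta(x) + \eta$. Then $w_p(x) \geq \phi(x) \geq 0$ on $[0,2]$, and consequently $w_p(x)^p \geq \phi(x)^p = x^{p-1}$ on $[0,2]$, which is the required pointwise bound. Moreover $w_p(x) \leq \phi(x) + 2\eta$, so $w_p^p$ is uniformly bounded on $[0,2]$ and converges uniformly to $x^{p-1}$ as $\eta \to 0^+$.

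Finally, by uniform convergence and since $X_1$ takes values in $[0,2]$,
\[
\int_\T w_p^p(X_1)\, \de m \ \xrightarrow[\eta \to 0^+]{} \ \int_\T X_1^{p-1}\, \de m \ < \ \left(\int_\T X_1^p\, \de m\right)^{(p-1)/p},
\]
so choosing $\eta$ small enough makes $\lambda_1(p) < 1$. There is no substantive obstacle here; the only point requiring care is to add the extra $\eta$ (rather than using $q_\eta$ directly) so that $w_p \geq 0$ on $[0,2]$, which is what legitimises passing from $w_p \geq \phi$ to $w_p^p \geq \phi^p$.
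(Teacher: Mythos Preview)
Your proof is correct and follows essentially the same route as the paper: Weierstrass-approximate $x^{(p-1)/p}$ from above by a polynomial, then use the strict inequality $\int_\T X_1^{p-1}\,\de m < \big(\int_\T X_1^p\,\de m\big)^{(p-1)/p}$ (the paper states this as strict monotonicity of $L^q$-norms, you phrase it via Jensen; these are the same thing) to conclude for small enough approximation error. Your explicit remark on adding $\eta$ to ensure $w_p\geq 0$ before raising to the $p$-th power is a point the paper leaves implicit.
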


\begin{proof}
{\red Let $I_p = \left(\int_{\te} (1+\cos t)^p \de m(t)\right)^{1/p}$. By Jensen's inequality, $I_p \geq I_{p-1}$, but $1+\cos t$ is non-constant, so this inequality is in fact strict. Let $\delta > 0$ be such that $I_p = (1+\delta)I_{p-1}$. Note that $\delta$ depends only on $p$. Now choose $\ve > 0$ (depending on $\delta$) such that
\[
\frac{\left(\left(\int_{\te} X_1^{p-1}\de m\right)^{1/p}+\ve\right)^p}{\left(\int_{\te} X_1^p\de m\right)^{(p-1)/p}} = \frac{\left(I_{p-1}^{(p-1)/p}+\ve\right)^p}{I_p^{p-1}} < 1.
\]
}
By the Weierstrass approximation theorem,
there exists a polynomial $w_p$ such that $x^{(p-1)/p}\leq w_p(x)\leq x^{(p-1)/p}+\ve$ for $x\in [0,2]$. 
To finish, it is enough to observe that
\[
%\left(\int_{\te} w_p^p(X_j)\de m\right)^{1/p}=
\left(\int_{\te} w_p^p(X_1)\de m\right)^{1/p}
\leq \left(\int_{\te} X_1^{p-1}\de m\right)^{1/p}+\ve
\]
{\red and then $\lambda_1(p) < 1$ by the choice of $\ve$.}
\end{proof}

{\red
\begin{remark}\label{rem:wp}
We emphasize that it is clear from the proof that the polynomial $w_p$ depends only on $p$ (in particular it does not depend on $n_1$ which defines $X_1$).
\end{remark}
}

We are now in position to give the main ingredients for the induction procedure.

\begin{lem}
For $p > 1$, there exist constants $C_5(p), C_6(p),C_7(p)$ and $\lambda_2(p)<1$ with the following property.
If $n_{j+1}/n_j\geq C_5(p)k$ for $j \geq 1$, $k\geq 1,l\geq 0$, then for any $\mu \in \calf_{k,l}^p$, any $N\geq l+1$ and
any vector valued polynomial $f$ of order at most $2n_l$, we have
\begin{equation}
\label{eq:tech3}
\int_{\te} \|f\|^p\varphi_k^p(n_{l+1}t)\de \mu
\geq \frac{1}{4}\int_{\te} \|f\|^p\de \mu\int_{\te} \varphi_k^p \de m
\end{equation}
and
\begin{align}
\notag
\int_\te \|f\|&R_N^{p-1}\varphi_k^p(n_{l+1}t)\de \mu
\\
\label{eq:tech1}
&\leq \frac{C_6(p)}{k^{(p-1)/p}}\lambda_2(p)^{N-l-1} 
\left(\int_\te \|f\|^p\de \mu\right)^{1/p} \left(\int_\te \varphi_k^p \de m \right)
\left(\int_\te R_N^p \de\mu\right)^{(p-1)/p}.
\end{align}
Moreover for any $v_{l+1},\ldots,v_N$ we have
\begin{align}
\notag
&\int_\te \|f\|\left\|\sum_{j=l+1}^N v_jR_j\right\|^{p-1}\varphi_k^p(n_{l+1}t)\de \mu
\\
\label{eq:tech2}
&\leq \frac{C_7(p)}{k^{(p-1)/p}} 
\left(\int_\te \|f\|^p\de \mu\right)^{1/p}\left(\int_\te \varphi_k^p \de m \right)
\left(\sum_{j=l+1}^N\lambda_2(p)^{j-l-1}\|v_j\|^p\int_\te R_j^p \de\mu\right)^{(p-1)/p}.
\end{align}
\end{lem}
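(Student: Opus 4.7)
The plan is to prove the three inequalities together, using as main tools the polynomial-weight approximation (Lemma~\ref{lem:approx2}), the approximate-factorization estimates (Lemmas~\ref{correlation} and~\ref{lem:Lp1}), Lemma~\ref{lem:varphikp} for the $1/k^{(p-1)/p}$ gain, and Lemma~\ref{lem:Wei} for the $\lambda_2^{N-l-1}$ decay. For \eqref{eq:tech3} I would first apply Lemma~\ref{lem:approx2} to find a trigonometric polynomial $h$ with $g := d\mu/dm \le h^p \le 2g$ of degree at most $C_1(p) n_l k$, so that $fh$ is a vector-valued trigonometric polynomial of degree at most $(C_1(p) k + 2) n_l$. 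The factorization estimate \eqref{eq:Lpcorr} applied to $\|fh\|^p$ and the high-frequency factor $\varphi_k^p(n_{l+1} t)$ then produces a multiplicative loss of order $2\pi p \deg(fh)/n_{l+1} = O(p(C_1(p)+2)/C_5(p))$, which can be made $\le \tfrac12$ by taking $C_5(p)$ large enough; combining with the sandwich $g \le h^p \le 2g$ yields the lower bound with constant $\ge \tfrac14$. The same argument gives an analogous upper bound, which will be used as a black box in the other two estimates.

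The main technical step is \eqref{eq:tech1}. Write $R_N^{p-1} = R_{l+1}^{p-1}\prod_{j=l+2}^N X_j^{p-1}$, use Lemma~\ref{lem:Wei} to bound $X_j^{p-1} \le w_p^p(X_j)$ for $j \ge l+2$, and observe that each $w_p^p(X_j)$ is a trigonometric polynomial in $t$ of degree $D_p n_j$ with $D_p := \deg(w_p^p)$ depending only on $p$. I would peel off these factors one at a time, $j = N, N-1, \ldots, l+2$, by successive applications of Lemma~\ref{correlation} (combined with Bernstein bounds on the low-frequency factor); each peel contributes the main term $\int_\te w_p^p(X_1)\,dm = \lambda_1(p) (\int_\te X_1^p\,dm)^{(p-1)/p}$ up to a multiplicative error $1 + O(D_p/(C_5(p)k))$ controlled by lacunarity. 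After $N - l - 1$ peels the problem reduces to bounding $\int_\te \|f\| R_{l+1}^{p-1} \varphi_k^p(n_{l+1} t)\, d\mu$, which H\"older (with the split $\|f\| \varphi_k \cdot (R_{l+1} \varphi_k)^{p-1}$) dominates by $(\int \|f\|^p \varphi_k^p\,d\mu)^{1/p} (\int R_{l+1}^p \varphi_k^p\,d\mu)^{(p-1)/p}$. The first factor is handled by the upper-bound version of \eqref{eq:tech3}; for the second, approximate factorization separates $R_l^p g$ from the $n_{l+1}$-mode factor $X_{l+1}^p \varphi_k^p(n_{l+1} t)$, and Lemma~\ref{lem:varphikp} (after the change of variable $s = n_{l+1} t$ and the identifications $\cos^2(s/2) = (1+\cos s)/2$, $\sin^2(s/2) = (1-\cos s)/2$) supplies the bound $\int X_1^p \varphi_k^p\,dm \le \tfrac{1}{kp+1}\int X_1^p\,dm \int \varphi_k^p\,dm$. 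Assembling these and using $\int R_{l+1}^p\,d\mu \cdot (\int X_1^p\,dm)^{N-l-1} \approx \int R_N^p\,d\mu$ yields \eqref{eq:tech1}, with $\lambda_2(p)$ any fixed value strictly between $\lambda_1(p)$ and $1$; the cumulative peel errors are absorbed into the ratio $\lambda_2/\lambda_1 > 1$ by choosing $C_5(p)$ large.

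For \eqref{eq:tech2} I would reduce to \eqref{eq:tech1} term-by-term, with the method depending on the range of $p$. When $1 < p \le 2$, the triangle inequality together with subadditivity of $x \mapsto x^{p-1}$ gives $\|\sum_j v_j R_j\|^{p-1} \le \sum_j \|v_j\|^{p-1} R_j^{p-1}$; applying \eqref{eq:tech1} to each term and then H\"older on the discrete $j$-sum with exponents $p/(p-1)$ and $p$ (using that $\|v_j\|^{p-1} \lambda_2^{j-l-1} (\int R_j^p\,d\mu)^{(p-1)/p} = (\lambda_2^{j-l-1} \|v_j\|^p \int R_j^p\,d\mu)^{(p-1)/p} \lambda_2^{(j-l-1)/p}$ and $\sum_j \lambda_2^{j-l-1} \le (1-\lambda_2)^{-1}$) reassembles the weighted $\ell^p$-norm on the right. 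When $p \ge 2$, subadditivity fails, but Minkowski's inequality in $L^{p-1}(\|f\|\varphi_k^p\,d\mu, E)$ yields $(\int \|f\|\|\sum_j v_j R_j\|^{p-1} \varphi_k^p\,d\mu)^{1/(p-1)} \le \sum_j \|v_j\| (\int \|f\| R_j^{p-1} \varphi_k^p\,d\mu)^{1/(p-1)}$; bounding each summand by \eqref{eq:tech1} and summing via H\"older (now with the convergent series $\sum_j \lambda_2^{(j-l-1)/(p-1)^2}$) produces \eqref{eq:tech2}. The main obstacle I anticipate is in \eqref{eq:tech1}: the bookkeeping of approximation losses through the peeling induction must be uniform enough that one loses only a constant factor $\lambda_2(p)<1$ per peel, independently of $j$ and of the total length $N-l-1$.
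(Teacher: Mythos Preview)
Your treatments of \eqref{eq:tech3} and \eqref{eq:tech2} match the paper's and are fine. The gap is in your plan for \eqref{eq:tech1}: you propose to peel off the factors $w_p^p(X_j)$, $j=N,\ldots,l+2$, \emph{before} applying H\"older, via Lemma~\ref{correlation} ``combined with Bernstein bounds on the low-frequency factor''. But at the first peel the low-frequency factor is
\[
G(t)=\|f(t)\|\,R_{l+1}(t)^{p-1}\,\varphi_k^p(n_{l+1}t)\,g(t)\prod_{j=l+2}^{N-1}w_p^p(X_j(t)),
\]
and this is \emph{not} of the form $\|F\|^p$ for a trigonometric polynomial $F$: the obstructions are the first power of $\|f\|$ and, more seriously, the non-polynomial factor $R_{l+1}^{p-1}$. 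Consequently neither Bernstein's inequality \eqref{eq:BernLp} nor its corollary \eqref{eq:Lpcorr} applies to $G$. If you try to use Lemma~\ref{correlation} directly you must control $\int_\te|G'|\,\de m$, and the term coming from $(R_{l+1}^{p-1})'=(p-1)R_{l+1}^{p-1}\sum_{j\le l+1}(-n_j)\tan(n_jt/2)$ has no Bernstein-type bound of the shape $\int|G'|\lesssim (\text{degree})\int G$, since $\tan(n_jt/2)$ is unbounded (for $1<p\le 3/2$ the singularity of $R_{l+1}^{p-1}|\tan(n_jt/2)|$ is integrable but not uniformly controlled by $G$).

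The paper avoids this by reversing the order: H\"older is applied \emph{first}, with the split
\[
\int_\te \|f\|R_N^{p-1}\,\de\tilde\mu
\le\Big(\int_\te \|f\|^p R_{l+2,N}^{p-1}\,\de\tilde\mu\Big)^{1/p}
\Big(\int_\te R_{l+1}^{p}R_{l+2,N}^{p-1}\,\de\tilde\mu\Big)^{(p-1)/p},
\qquad \de\tilde\mu=h^p\varphi_k^p(n_{l+1}t)\,\de m,
\]
so that $R_{l+2,N}^{p-1}\le\prod_{j=l+2}^N w_p^p(X_j)$ sits in \emph{both} integrals while $\|f\|$ and $R_{l+1}$ appear only to the $p$-th power. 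Now the low-frequency parts are genuinely of the form $\|F\|^p$ with $F=fh\varphi_k(n_{l+1}\cdot)\prod_{j<N} w_p(X_j)$ (resp.\ $F=R_{l+1}h\varphi_k(n_{l+1}\cdot)\prod_{j<N} w_p(X_j)$), and \eqref{eq:Lpcorr} peels off $w_p^p(X_N)$ cleanly, yielding the factor $\lambda_1(p)(\int X_1^p)^{(p-1)/p}$ per step in each integral. After the peels, Lemma~\ref{lem:varphikp} is applied to the second integral exactly as you indicate, and the approximate factorization $\int R_l^p\,\de\mu\prod_{j=l+1}^N\int X_j^p\,\de m\approx \int R_N^p\,\de\mu$ reassembles the right-hand side. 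In short: all your ingredients are correct, but the H\"older split must come before the peeling so that Bernstein is applicable at every step.
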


\begin{proof}
Let $g=\frac{\de\mu}{\de m}$ and $h$ be a trigonometric polynomial given by Lemma \ref{lem:approx2}. Notice that
$hf$ is a vector-valued trigonometric polynomial with degree at most $(C_1(p)+2)n_lk$. Thus by \eqref{eq:Lpcorr}
we have for sufficiently large $C_5(p)$,
\begin{align*}
\int_{\te} \|f\|^p\varphi_k^p(n_{l+1}t)\de \mu
&\geq \frac{1}{2}\int_{\te} \|fh\|^p\varphi_k^p(n_{l+1}t)\de m
\geq \frac{1}{4}\int_{\te} \|fh\|^p\de m\int_{\te}\varphi_k^p\de m
\\
&\geq \frac{1}{4}\int_{\te} \|f\|^p\de \mu\int_{\te} \varphi_k^p \de m.
\end{align*}

To establish \eqref{eq:tech1}, let us define $\de\tilde{\mu}=h^p(t)\varphi_k^p(n_{l+1}t)\de m$.
By H\"older's inequality, we have
\begin{align*}
\int_{\te} \|f\|R_N^{p-1}\varphi_k^p(n_{l+1}t)\de \mu
&\leq \int_{\te} \|f\|R_N^{p-1}\de \tilde{\mu}
\\
&\leq \left(\int_{\te} \|f\|^p R_{l+2,N}^{p-1}\de \tilde{\mu}\right)^{1/p}
\left(\int_{\te} R_{l+1}^{p}R_{l+2,N}^{p-1}\de\tilde{\mu}\right)^{(p-1)/p}. 
\end{align*}
We have used the notation, for $1\leq l\leq N,$
\begin{equation}\label{defR}
   R_{l, N} =\prod_{j=l}^N X_j.
\end{equation}

Let $w_p$ be given by Lemma \ref{lem:Wei} and $\ve=\ve_p$ be a small positive number to be chosen later.
By \eqref{eq:Lpcorr}, if $C_5(p)$ is sufficiently large, we have
\begin{align*}
\int_{\te} \|f\|^p R_{l+2,N}^{p-1}\de \tilde{\mu}
&\leq \int_{\te} \|f\|^p \prod_{j=l+2}^N w_p^p(X_j)\de \tilde{\mu}
\\
&\leq (1+\ve) \int_{\te} \|f\|^p \prod_{j=l+2}^{N-1}w_p^p(X_j)\de \tilde{\mu}\int_\te w_p^p(X_N)\de m
\leq\ldots
\\
&\leq (1+\ve)^{N-l-1} \int_{\te} \|f\|^p \de \tilde{\mu}\prod_{j=l+2}^N \int_\te w_p^p(X_j)\de m
\\
&\leq (1+\ve)^{N-l} \int_{{\red \te}} \|fh\|^p \de m\int_{\te} \varphi_k^p \de m \prod_{j=l+2}^N \int_\te w_p^p(X_j)\de m
\\
&\leq 2(1+\ve)^{N-l}\lambda_1(p)^{N-l-1} 
\int_{\te} \|f\|^p \de \mu\int_{\te} \varphi_k^p \de m 
\prod_{j=l+2}^N\left(\int_\te X_j^p\de m\right)^{(p-1)/p}.
\end{align*}

In the same way we show that
\begin{align*}
\int_{\te} &R_{l+1}^{p}R_{l+2,N}^{p-1}\de\tilde{\mu}
\\
&\leq 2(1+\ve)^{N-l}\lambda_1(p)^{N-l-1}
\int_{\te} R_l^p \de \mu\int_{\te} X_{l+1}^p\varphi_k^p(n_{l+1}t) \de m 
\prod_{j=l+2}^N\left(\int_\te X_j^p\de m\right)^{(p-1)/p}.
\end{align*}

The above estimates together with Lemma \ref{lem:varphikp} yield
\begin{align*}
\int_\te \|f\|R_N^{p-1}\varphi_k^p(n_{l+1}t)\de \mu 
\leq 2\left(\frac {1}{kp+1}\right )^{(p-1)/p}
&(1+\ve)^{N-l}\lambda_1(p)^{N-l-1}\left(\int_\te \|f\|^p\de \mu\right)^{1/p}
\\
&\times \left(\int_\te \varphi_k^p \de m\right) \left(\int_\te R_l^p\de\mu \prod_{j=l+1}^N \int_\te X_j^p\de m\right)^{(p-1)/p}.
\end{align*}

Estimate \eqref{eq:Lpcorr} implies however that for sufficiently large $C_5(p)$,
\[
\int_\te R_N^pd\mu \geq \frac12 (1-\ve)\int_\te R_{N-1}^p h^p \dd m \int_\te X_N^p\de m
\geq\ldots\geq
\frac12 (1-\ve)^{N-l}\int_\te R_l^p g \dd m \prod_{j=l+1}^N \int_\te X_j^p\de m.
\]
To derive \eqref{eq:tech1} we choose $\ve=\ve_p$ in such a way that
%$\ve<1/3$ and
\[
\lambda_2(p):=(1+\ve)(1-\ve)^{(1-p)/p}\lambda_1(p)<1.
\]

To show \eqref{eq:tech2} we consider two cases. First assume that $1<p\leq 2$. By \eqref{eq:tech1}, we have
\begin{align*}
&\int_\te \|f\|\left\|\sum_{j=l+1}^N v_jR_j\right\|^{p-1}\varphi_k^p(n_{l+1}t)\de \mu
\\
&\leq
\int_\te \|f\|\sum_{j=l+1}^N \left\|v_jR_j\right\|^{p-1}\varphi_k^p(n_{l+1}t)\de \mu
\\
&\leq
\frac{C_6(p)}{k^{(p-1)/p}} \left(\int_\te \|f\|^p\de \mu\right)^{1/p}\left(\int \varphi_k^p \de m\right)
\sum_{j=l+1}^N \lambda_2(p)^{j-l-1}\|v_j\|^{p-1}\left(\int_\te R_j^p \de\mu\right)^{(p-1)/p}.
\end{align*}
However
\begin{align*}
\sum_{j=l+1}^N \lambda_2(p)^{j-l-1}\|v_j\|^{p-1}&\left(\int_\te R_j^p \de\mu\right)^{(p-1)/p}
\\
&\leq \left(\sum_{j=l+1}^N \lambda_2(p)^{j-l-1}\right)^{1/p} 
\left(\sum_{j=l+1}^N \lambda_2(p)^{j-l-1}\|v_j\|^{p}\int_\te R_j^p \de\mu\right)^{(p-1)/p}
\\
&\leq (1-\lambda_2(p))^{-1/p}
\left(\sum_{j=l+1}^N \lambda_2(p)^{j-l-1}\|v_j\|^{p}\int_\te R_j^p \de\mu\right)^{(p-1)/p},
\end{align*}
which concludes for this case.

Finally, if $p>2$, we have by the triangle inequality in $L^{p-1}$ and \eqref{eq:tech1}
\begin{align*}
&\int_\te \|f\|\left\|\sum_{j=l+1}^N v_jR_j\right\|^{p-1}\varphi_k^p(n_{l+1}t)\de \mu
\\
&\leq 
\left(\sum_{j=l+1}^N \|v_j\|\left(\int_\te \|f\|R_j^{p-1}\varphi_k^p(n_{l+1}t)\de \mu\right)^{1/(p-1)}\right)^{p-1}
\\
&
\leq 
\frac{C_6(p)}{k^{(p-1)/p}} \left(\int_\te \|f\|^p\de \mu\right)^{1/p}\left(\int_\te \varphi_k^p \de m\right)
\left(\sum_{j=l+1}^N \|v_j\|\lambda_2(p)^{(j-l-1)/(p-1)}\left(\int_\te R_j^p\de \mu\right)^{1/p}\right)^{p-1}.
\end{align*}
To finish the proof of \eqref{eq:tech2} in this case it is enough to observe that by H\"older's inequality
\begin{align*}
&\sum_{j=l+1}^N \|v_j\|\lambda_2(p)^{(j-l-1)/(p-1)}\left(\int_\te R_j^p\de \mu\right)^{1/p}
\\
&\leq
\left(\sum_{j=l+1}^N \lambda_2(p)^{(j-l-1)/(p-1)^2}\right)^{(p-1)/p}
\left(\sum_{j=l+1}^N \lambda_2(p)^{j-l-1}\|v_j\|^{p}\int_\te R_j^p \de\mu\right)^{1/p}
\\
&\leq \left(1-\lambda_2(p)^{1/(p-1)^2}\right)^{(1-p)/p}
\left(\sum_{j=l+1}^N \lambda_2(p)^{j-l-1}\|v_j\|^{p}\int_\te R_j^p \de\mu\right)^{1/p}.
\end{align*}

\end{proof}

\begin{prop}
\label{prop:mainLp}
%%There exist positive constants constant $C_0(p,k)$ and $\lambda_0(p)<1$ with the following property.
If $k\geq 2, N\geq l\geq 0$, $n_{j+1}/n_j\geq \max\{C_3(p),C_5(p),8\}k$ for $j \geq 1$, then for any 
$\mu\in \calf_{k,l}^p$ and any 
vectors $v_0,v_1,\ldots,v_N$ in a normed space $(E,\|\cdot\|)$ 
we have
\[
\int_{\te}\Big\|\sum_{j=0}^N v_jR_j\Big\|^p\de\mu\geq
\alpha_p\int_{\te}\Big\|\sum_{j=0}^lv_jR_j\Big\|^p\de\mu+
\sum_{j=l+1}^N(\beta_p-c_{p,j-l})\|v_j\|^p\int_{\te} R_j^p\de\mu,
\]
where
\[
\alpha_p=\frac{1}{16\cdot 3^p}\int \varphi_k^p\de m ,\ \ \beta_p=\frac{c_3(p)}{2}\alpha_p,\ \
\gamma_p=(16p3^pC_7(p))^{\frac{p}{p-1}}\frac{\alpha_p}{k}\ \ \mbox{and}\ \ 
c_{p,j}=\gamma_p\sum_{i=0}^{j-1} \lambda_2(p)^i.
\]
\end{prop}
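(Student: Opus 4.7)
The plan is to induct on $N-l$ (equivalently, downward on $l$ with $N$ fixed), starting from the trivial base $N=l$, where the inequality reduces to $\alpha_p\le 1$. For the inductive step I fix $l<N$ and $\mu\in\calf^p_{k,l}$, split
\[
\sum_{j=0}^N v_jR_j=f+g,\qquad f:=\sum_{j=0}^l v_jR_j,\quad g:=\sum_{j=l+1}^N v_jR_j,
\]
so that $f$ is a vector-valued trigonometric polynomial of degree at most $2n_l$ (this is the hypothesis needed to invoke \eqref{eq:tech2}, \eqref{eq:tech3} with the weight $\varphi_k^p(n_{l+1}t)$). I pass to the auxiliary measure $\de\tilde\mu=\tfrac12\varphi_k^p(n_{l+1}t)\de\mu$, which lies in $\calf^p_{k,l+1}$, is $\le\mu$, and already carries the $\varphi_k^p(n_{l+1}t)$ factor that is needed in order to apply the key estimates of the previous lemma.

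The proof of Lemma~\ref{lem:Lp5} actually yields, for any non-negative measure $\nu$, the unconditional bound
\[
\int \|f+g\|^p\de\nu \;\ge\; \int \|g\|^p\de\nu+\tfrac{1}{3^p}\int\|f\|^p\de\nu-2p\int\|g\|^{p-1}\|f\|\de\nu.
\]
Applied with $\nu=\tilde\mu$, I would estimate $\int\|f\|^p\de\tilde\mu$ from below by \eqref{eq:tech3} and the cross term $\int\|g\|^{p-1}\|f\|\de\tilde\mu$ from above by \eqref{eq:tech2}. To convert the resulting H\"older-type product $(\int\|f\|^p\de\mu)^{1/p}M^{(p-1)/p}$ (where $M=\sum_{j\ge l+1}\lambda_2(p)^{j-l-1}\|v_j\|^p\int R_j^p\de\mu$) into a linear combination of $\int\|f\|^p\de\mu$ and $M$, I apply Young's inequality
\[
a^{1/p}b^{(p-1)/p}\;\le\;\tfrac{\eta^{p-1}}{p}\,a+\tfrac{p-1}{p\eta}\,b,
\]
tuned with the balanced parameter $\eta\asymp (16p\cdot 3^p C_7(p))^{-1/(p-1)}k^{1/p}$ so that the coefficient in front of $\int\|f\|^p\de\mu$ lands exactly on $\alpha_p=\tfrac{1}{16\cdot 3^p}\int\varphi_k^p\de m$ and the coefficient in front of $M$ lands exactly on $\gamma_p$. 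Combining and using $\tilde\mu\le\mu$ then produces the intermediate inequality
\[
\int\|f+g\|^p\de\mu\;\ge\;\int\|g\|^p\de\tilde\mu+\alpha_p\int\|f\|^p\de\mu-\gamma_p\sum_{j=l+1}^N\lambda_2(p)^{j-l-1}\|v_j\|^p\int R_j^p\de\mu.
\]

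The last task is to recover the terms $(\beta_p-c_{p,j-l})\|v_j\|^p\int R_j^p\de\mu$ from the remaining $\int\|g\|^p\de\tilde\mu$ piece. For indices $j\ge l+2$ this is done by applying the inductive hypothesis at level $l+1$ (using the truncated sequence $v'_j=v_j\mathbf 1_{\{j\ge l+1\}}$ so that $\sum v'_jR_j=g$), after which the telescoping identity $c_{p,j-l-1}+\gamma_p\lambda_2(p)^{j-l-1}=c_{p,j-l}$ absorbs the error from Young's inequality exactly. For the $j=l+1$ contribution one uses Lemma~\ref{lem:Lp3} to extract $c_3(p)\|v_{l+1}\|^p\int R_{l+1}^p\de\mu$ directly and then halves it when combining with the $(1-\tfrac12\varphi_k^p)$ complement of $\tilde\mu$, which is precisely why $\beta_p$ is defined as $\tfrac{c_3(p)}{2}\alpha_p$. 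The hard part will be exactly this transition: Lemma~\ref{lem:varphikp} shows that $\int R_{l+1}^p\de\tilde\mu$ is strictly smaller than $\int R_{l+1}^p\de\mu$ by a factor of order $(kp)^{-1}\int\varphi_k^p\de m$, so one cannot simply apply induction to $\tilde\mu$ and read off the desired bound; Lemma~\ref{intprod} and the correlation bound \eqref{eq:Lpcorr} are what allow the $\tilde\mu$-integrals to be factorised and matched back to $\mu$-integrals up to errors of order $k^{-1}$, and ensuring those errors fit into the $\gamma_p$-term forces the lacunarity condition $n_{j+1}/n_j\gtrsim k$.
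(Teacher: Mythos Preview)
Your plan contains a genuine gap at precisely the point you flag as ``the hard part''. Once you pass from $\mu$ to $\tilde\mu=\tfrac12\varphi_k^p(n_{l+1}t)\,\de\mu$ via $\tilde\mu\le\mu$ and write the intermediate inequality
\[
\int\|f+g\|^p\de\mu\ \ge\ \int\|g\|^p\de\tilde\mu+\alpha_p\int\|f\|^p\de\mu-\gamma_p\sum_{j\ge l+1}\lambda_2(p)^{j-l-1}\|v_j\|^p\int R_j^p\de\mu,
\]
you can no longer recover the required $\int R_j^p\de\mu$ from $\int R_j^p\de\tilde\mu$ for any $j\ge l+1$. Your claim that these integrals match ``up to errors of order $k^{-1}$'' via Lemma~\ref{intprod} and \eqref{eq:Lpcorr} is wrong: for $j\ge l+1$ the product $R_j^p$ contains the factor $X_{l+1}^p=(1+\cos(n_{l+1}t))^p$, which is \emph{anti}-correlated with $\varphi_k^p(n_{l+1}t)=\big(\tfrac{1-\cos(n_{l+1}t)}{2}\big)^{kp}$. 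By Lemma~\ref{lem:varphikp} one has $\int X_{l+1}^p\varphi_k^p(n_{l+1}t)\de m\le\tfrac{1}{kp+1}\int X_{l+1}^p\de m\int\varphi_k^p\de m$, and after factorisation this gives $\int R_j^p\de\tilde\mu\lesssim\tfrac{1}{kp}\big(\int\varphi_k^p\de m\big)\int R_j^p\de\mu$, a tiny number, not $1+O(k^{-1})$. So the inductive hypothesis applied to $\int\|g\|^p\de\tilde\mu$ yields nothing usable for the $(\beta_p-c_{p,j-l})$ sum.

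The repair is exactly what the paper does: do not discard the complement $\de\mu_1=(1-\tfrac12\varphi_k^p(n_{l+1}t))\de\mu$. Bound $\int\|f+g\|^p\de\mu_1$ separately by the inductive hypothesis at level $l+1$ (this produces both the $\beta_p\|v_{l+1}\|^p\int R_{l+1}^p\de\mu$ term via $\mu_1\ge\tfrac12\mu$ and Lemma~\ref{lem:Lp3}, and the $j\ge l+2$ contributions with respect to $\mu_1$), and then add the two pieces using $\mu_1+\tilde\mu=\mu$ so that $\int R_j^p\de\mu_1+\int R_j^p\de\tilde\mu=\int R_j^p\de\mu$ exactly. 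Your Young-inequality treatment of the cross term on the $\tilde\mu$ side is a legitimate alternative to the paper's Case~1/Case~2 dichotomy, but it only works once the $\mu_1$ piece is kept alongside it; used alone, dropping to $\tilde\mu$ loses too much.
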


\begin{proof}
We proceed by induction on $N-l$. If $N-l= 0$ the assertion is obvious, since $\alpha_p\leq 1$. 
To show the induction step
we may assume that $l$ is fixed and we increased $N$. We consider two cases.

\medskip

\noindent
{\bf Case 1.} 
$\alpha_p\int_{\te}\Big\|\sum_{j=0}^{l}v_jR_j\Big\|^p\de\mu
\leq  \gamma_p\sum_{j=l+1}^{N+1}\lambda_2(p)^{j-l-1}\|v_j\|^p\int_{\te} R_j^p\de\mu$.

\medskip

By the induction assumption (applied to $N+1$ and $l+1$), we have
\begin{align*}
\int_{\te}\Big\|\sum_{j=0}^{N+1}v_jR_j\Big\|^p\de\mu
&\geq
\alpha_p\int_{\te}\Big\|\sum_{j=0}^{l+1}v_jR_j\Big\|^p\de\mu
+\sum_{j=l+2}^{N+1}(\beta_p-c_{p,j-l-1})\|v_j\|^p\int_{\te} R_j^p\de\mu
\\
&\geq \beta_p\|v_{l+1}\|^p\int_{\te} R_{l+1}^p\de\mu
+\sum_{j=l+2}^{N+1}(\beta_p-c_{p,j-l-1})\|v_j\|^p\int_{\te} R_j^p\de\mu
\\
&\geq \alpha_p\int_{\te}\Big\|\sum_{j=0}^{l}v_jR_j\Big\|^p\de\mu
-\gamma_p\sum_{j=l+1}^{N+1}\lambda_2(p)^{j-l-1}\|v_j\|^p\int_{\te} R_j^p\de\mu
\\
&\phantom{\geq}+\beta_p\|v_{l+1}\|^p\int_{\te} R_{l+1}^p\de\mu
+\sum_{j=l+2}^{N+1}(\beta_p-c_{p,j-l-1})\|v_j\|^p\int_{\te} R_j^p\de\mu
\\
&=\alpha_p\int_{\te}\Big\|\sum_{j=0}^{l}v_jR_j\Big\|^p\de\mu
+\sum_{j=l+1}^{N+1}(\beta_p-c_{p,j-l})\|v_j\|^p\int_{\te} R_j^p\de\mu,
\end{align*}
where the second inequality follows by Lemma \ref{lem:Lp3}.

\medskip

\noindent
{\bf Case 2.} 
$\alpha_p\int_{\te}\Big\|\sum_{j=0}^{l}v_jR_j\Big\|^p\de\mu
> \gamma_p \sum_{j=l+1}^{N+1}\lambda_2(p)^{j-l-1}\|v_j\|^p\int_{\te} R_j^p\de\mu$.

\medskip

Let 
\[
\de\mu_1=\left(1-\frac{1}{2}\varphi_k^p(n_{l+1}t)\right)\de\mu \quad \mbox{and} 
\quad \de\mu_2=\frac{1}{2}\varphi_k^p(n_{l+1}t)\de\mu.
\]

The induction assumption applied to $l+1$ and $N+1$ with the measure $\mu_1\in \calf_{k,l+1}^p$ yields
\[
\int_{\te}\Big\|\sum_{j=0}^{N+1}v_jR_j\Big\|^p\de\mu_1
\geq 
\alpha_p\int_{\te}\Big\|\sum_{j=0}^{l+1}v_jR_j\Big\|^p\de\mu_1+
\sum_{j=l+2}^{N+1}(\beta_p-c_{p,j-l-1})\|v_j\|^p\int_{\te} R_j^p\de\mu_1.
\]
Since $1-\frac{1}{2}\varphi_k^p\geq \frac{1}{2}$, we get by Lemma \ref{lem:Lp3} 
\[
\int_{\te}\Big\|\sum_{j=0}^{l+1}v_jR_j\Big\|^p\de\mu_1
\geq \frac{1}{2}\int_{\te}\Big\|\sum_{j=0}^{l+1}v_jR_j\Big\|^p\de\mu
\geq \frac{1}{2}c_3(p)\|v_{l+1}\|^p\int_{\te}R_{l+1}^p\de\mu,
\]
hence
\begin{equation}
\label{eq:est1Lp}
\int_{\te}\Big\|\sum_{j=0}^{N+1}v_jR_j\Big\|^p\de\mu_1
\geq \beta_p\|v_{l+1}\|^p\int_{\te}R_{l+1}^pd\mu
+\sum_{j=l+2}^{N+1}(\beta_p-c_{p,j-l-1})\|v_j\|^p\int_{\te} R_j^p\de\mu_1.
\end{equation}

Define 
\[
f=\sum_{j=0}^lv_jR_j \quad \mbox{and}\quad  g=\sum_{j=l+1}^{N+1}v_jR_j.
\]
Estimate \eqref{eq:tech3} and the assumptions of Case 2 yield 
\begin{align*}
\int_{\te}\|f\|^p\de\mu_2
&\geq \frac{1}{8}\int_{\te}\|f\|^p\de\mu\int_{\te}\varphi_k^p\de m
\\
&\geq 
 \frac{1}{8}\left(\int_{\te}\|f\|^p\de\mu\right)^{1/p}\left(\int_{\te}\varphi_k^p\de m \right)
\left(\frac{\gamma_p}{\alpha_p}\sum_{j=l+1}^{N+1}\lambda_2(p)^{j-l-1}\|v_j\|^p\int_{\te} R_j^p\de\mu\right)^{(p-1)/p}.
\end{align*}
On the other hand, by \eqref{eq:tech2} we get
\begin{align*}
&\int_{\te}\|f\|\|g\|^{p-1}\de\mu_2
\\
&\leq \frac{C_7(p)}{2k^{(p-1)/p}} 
\left(\int_\te \|f\|^p\de \mu\right)^{1/p}  \left(\int_\te \varphi_k^p \de m \right)
\left(\sum_{j=l+1}^{N+1}\lambda_2(p)^{j-l-1}\|v_j\|^p\int_\te R_j^p \de\mu\right)^{(p-1)/p}.
\end{align*}
Thus
\[
\int_{\te}\|f\|\|g\|^{p-1}\de\mu_2\leq \frac{1}{4p3^p}\int_{\te}\|f\|^p\de\mu_2
\]
and Lemma \ref{lem:Lp5} gives
\[
\int_{\te} \Big\|\sum_{j=0}^{N+1}v_jR_j\Big\|^p\de\mu_2=
\int_{\te}\|f+g\|^{p}\de\mu_2\geq \frac{1}{2\cdot 3^p}\int_{\te}\|f\|^p\de\mu_2+\int_{\te}\|g\|^p\de\mu_2.
\]

Inequality \eqref{eq:tech3} gives
\[
\frac{1}{2\cdot 3^p}\int_{\te}\|f\|^p\de\mu_2
\geq \frac{1}{16\cdot 3^{p}} \int_{\te}\|f\|^p\de\mu\int \varphi_k^p\de m
= \alpha_p \int_{\te}\left\|\sum_{j=0}^lv_jR_j\right\|^p\de\mu.
\]
The induction assumption applied to $l+1,N+1$ and measure $\mu_2\in \calf_{k,l+1}^p$ yields
\[
\int_{\te}\|g\|^p\de\mu_2\geq \alpha_p\int_{\te}\|v_{l+1}R_{l+1}\|^p\de\mu_2+
\sum_{j=l+2}^{N+1}(\beta_p-c_{p,j-l-1})\|v_j\|^p\int_{\te} R_j^p\de\mu_2.
\]
Thus
\begin{equation}
\label{eq:est2Lp}
\int_{\te} \Big\|\sum_{j=0}^{N+1}v_jR_j\Big\|^p\de\mu_2
\geq \alpha_p \int_{\te}\left\|\sum_{j=0}^lv_jR_j\right\|^p\de\mu+\sum_{j=l+2}^{N+1}(\beta_p-c_{p,j-l-1})\|v_j\|^p\int_{\te} R_j^p\de\mu_2.
\end{equation}

Adding \eqref{eq:est1Lp} and \eqref{eq:est2Lp}, we obtain
\begin{align*}
\int_{\te}\Big\|\sum_{j=0}^{N+1}v_jR_j\Big\|^p\de\mu
\geq
& \alpha_p\int_{\te}\Big\|\sum_{j=0}^lv_jR_j\Big\|^p\de\mu+\beta_p\|v_{l+1}\|^p\int_{\te}R_{l+1}^p\de\mu
\\
& +\sum_{j=l+2}^{N+1}(\beta_p-c_{p,j-l-1})\|v_j\|^p\int_{\te} R_j^p\de\mu
\\
\geq&\alpha_p\int_{\te}\Big\|\sum_{j=0}^lv_jR_j\Big\|^p\de\mu
+\sum_{j=l+1}^{N+1}(\beta_p-c_{p,j-l})\|v_j\|^p\int_{\te} R_j^p\de\mu.
\end{align*}

\end{proof}

\begin{proof}[Proof of Theorem \ref{thm:intro} (lower bound)]
We now conclude the proof of Theorem {\red \ref{thm:intro}}. We recall that $k\gamma_p$ is uniformly bounded. Let $k=k(p)$ be the smallest integer such that $\frac{\gamma_p}{1-\lambda_2(p)} \leq \frac{\beta_p}{2}$. Then $c_{p,m} \leq \frac{\gamma_p}{1-\lambda_2(p)} \leq \frac{\beta_p}{2}$ and thus applying Proposition \ref{prop:mainLp} with $l=0$ and $\mu=m$ yields the result with the constant $c_p =\min(\alpha_p, \beta_p/2)$.
\end{proof}

\section{Proof of the upper bound}\label{sec:upper}

\begin{comment}
For example,
\begin{equation}\label{eq:Xkmoment}
\int_\T X_k^p = \int_0^{2\pi} \left|1 + \cos(n_kt)\right|^p\frac{\dd t}{2\pi} = \int_0^{2\pi} \left|1 + \cos(t)\right|^p\frac{\dd t}{2\pi} = \frac{2^p}{\sqrt{\pi}}\frac{\Gamma\left(1+p/2\right)}{\Gamma\left(1+p\right)}.
\end{equation}
\end{comment}

We first remark that, using the Minkowski inequality, we can replace the vector-valued coefficients $v_j$ by their norms. So it is sufficient to prove the inequality  $v_j$'s are real positive coefficients. 

All the integrals over the one dimensional torus $\T$ appearing in this section are with respect to its ({\red normalized}) Haar measure $m$. 
We shall need three preparatory facts. The first two are immediate corollaries to Lemma \ref{lem:Lp1}.

\begin{corollary}
\label{cor:factor_e}
For $p \geq 1$ and a nonzero integer $n$, we have
\begin{equation}
\left|\int_\T R_k(t)^pe^{int}\right| \leq \frac{2\pi p \deg R_k}{|n|}\int_\T R_k^p, \qquad k \geq 0.
\end{equation}
\end{corollary}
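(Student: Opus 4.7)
My plan is to obtain this bound as a direct specialization of inequality \eqref{eq:Lpcorr} in Lemma \ref{lem:Lp1}. Concretely, I would apply that lemma with $f = R_k$, which is a real, nonnegative, scalar-valued trigonometric polynomial of degree $d = \deg R_k = \sum_{j=1}^k n_j$, and with the integrable (complex-valued) function $h(s) = e^{is}$. Since $R_k$ is real and nonnegative, $\|R_k(t)\|^p$ coincides with $R_k(t)^p$, so \eqref{eq:Lpcorr} is applicable in the required form.

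With these choices, \eqref{eq:Lpcorr} specializes (for $n \geq 1$) to
\[
\left|\int_\T R_k(t)^p e^{int}\de m - \int_\T R_k^p\de m \cdot \int_\T e^{int}\de m\right| \leq \frac{2\pi p d}{n}\int_\T R_k^p\de m \cdot \int_\T |e^{int}|\de m.
\]
The zeroth Fourier mode of $h$ vanishes, so $\int_\T e^{int}\de m = 0$; and trivially $\int_\T |e^{int}|\de m = 1$. This immediately yields the claimed inequality when $n \geq 1$.

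The case $n \leq -1$ reduces to the previous case by complex conjugation: since $R_k^p$ is real, $\overline{\int_\T R_k(t)^p e^{int}\de m} = \int_\T R_k(t)^p e^{-int}\de m$, so the modulus on the left-hand side of the desired inequality depends only on $|n|$, and the right-hand side is already stated in terms of $|n|$. Thus the estimate for $n \geq 1$ transfers verbatim.

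I do not anticipate any real obstacle; the statement is just a convenient repackaging of \eqref{eq:Lpcorr} for the specific character $h(t) = e^{it}$, designed to exploit the fact that this character is orthogonal to the constants and has modulus one.
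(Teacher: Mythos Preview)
Your proposal is correct and matches the paper's approach: the paper states this corollary as an immediate consequence of Lemma~\ref{lem:Lp1}, and your specialization with $f=R_k$ and $h(s)=e^{is}$ (plus the conjugation remark for negative $n$) is exactly that.
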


\begin{corollary}\label{cor:totalfactor}
Let $p \geq 1$, $d \geq 2\pi p + 1$ and $n_{j+1}/n_j \geq d$, $j \geq 1$. For positive integers $k < l$, we have
\begin{equation}\label{eq:totalfactor1}
1-\frac{2\pi p}{d-1} \leq \frac{\int_\T R_{k,l}^pX_{l+1}^p}{\int_\T R_{k,l}^p \int_\T X_{l+1}^p} \leq 1+\frac{2\pi p}{d-1}.
\end{equation}
In particular, for $k \geq 0$, $l \geq 1$,
\begin{equation}\label{eq:totalfactor2}
\left(1-\frac{2\pi p}{d-1}\right)^{l-1} \leq \frac{\int_\T X_{k+1}^p\ldots X_{k+l}^p}{\int_\T X_{k+1}^p\ldots\int_\T X_{k+l}^p} \leq \left(1+\frac{2\pi p}{d-1}\right)^{l-1} . 
\end{equation}
\end{corollary}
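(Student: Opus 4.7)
The plan is to extract both bounds directly from the factorisation inequality \eqref{eq:Lpcorr} of Lemma \ref{lem:Lp1}, in which the role of the highly oscillating factor is played by $(1+\cos(n_{l+1}t))^p=X_{l+1}^p(t)$.

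For \eqref{eq:totalfactor1}, I would apply \eqref{eq:Lpcorr} with $f=R_{k,l}$ (viewed as a scalar, hence vector-valued, trigonometric polynomial), exponent $p$, auxiliary function $h(s)=(1+\cos s)^p\geq 0$, and $n=n_{l+1}$. Since $R_{k,l}\geq 0$ and $h\geq 0$, one has $\|R_{k,l}\|^p=R_{k,l}^p$ and $|h(n_{l+1}t)|=X_{l+1}^p(t)$, so
\[
\left|\int_\T R_{k,l}^p X_{l+1}^p-\int_\T R_{k,l}^p\int_\T X_{l+1}^p\right|\leq 2\pi\frac{p\deg R_{k,l}}{n_{l+1}}\int_\T R_{k,l}^p\int_\T X_{l+1}^p.
\]
The lacunarity hypothesis then controls the degree by a geometric series: $\deg R_{k,l}=\sum_{j=k}^l n_j\leq n_l\sum_{i=0}^\infty d^{-i}=d\,n_l/(d-1)$, and combining with $n_{l+1}\geq d\,n_l$ gives $\deg R_{k,l}/n_{l+1}\leq 1/(d-1)$. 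Dividing through by the positive quantity $\int_\T R_{k,l}^p\int_\T X_{l+1}^p$ yields \eqref{eq:totalfactor1}. Note that the argument also covers the degenerate case $k=l$, which will be convenient below.

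For \eqref{eq:totalfactor2}, I would iterate. Writing $Q_j:=\int_\T\prod_{i=1}^j X_{k+i}^p$, the same argument, applied to $f=\prod_{i=1}^{j}X_{k+i}$ (of degree $\leq d\,n_{k+j}/(d-1)$) with $n=n_{k+j+1}\geq d\,n_{k+j}$, peels off one factor of $X_{k+j+1}^p$ at a time:
\[
\left(1-\frac{2\pi p}{d-1}\right)Q_j\int_\T X_{k+j+1}^p\leq Q_{j+1}\leq\left(1+\frac{2\pi p}{d-1}\right)Q_j\int_\T X_{k+j+1}^p,
\]
with the same quotient $1/(d-1)$ uniformly in $j$. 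Chaining these $l-1$ estimates for $j=1,\ldots,l-1$ yields \eqref{eq:totalfactor2}. The hypothesis $d\geq 2\pi p+1$ keeps the lower factor $1-2\pi p/(d-1)$ nonnegative, so the resulting lower bound is meaningful.

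The main (modest) obstacle is the uniform bookkeeping of the lacunarity estimate $\deg(\text{prefix})/n_{\text{next}}\leq 1/(d-1)$ across the iteration; this is a one-line geometric-series argument, so no serious difficulty is expected.
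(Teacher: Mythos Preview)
Your proof is correct and follows essentially the same route as the paper: apply \eqref{eq:Lpcorr} with $f=R_{k,l}$, $h(s)=(1+\cos s)^p$, $n=n_{l+1}$, use the geometric-series bound $\deg R_{k,l}/n_{l+1}<1/(d-1)$ to obtain \eqref{eq:totalfactor1}, and then iterate to get \eqref{eq:totalfactor2}. Your observation that the argument also covers the case $k=l$ is exactly what is needed to start the iteration.
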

\begin{proof}
Note that
\[ \frac{\deg(R_{k,l})}{n_{l+1}} = \frac{n_{k} + \ldots + n_{l}}{n_{l+1}} \leq \frac{1}{d^{l-k+1}} + \ldots + \frac{1}{d} < \frac{1}{d-1}, \]
hence applying Lemma \ref{lem:Lp1} for $f = R_{k,l}$, $h(t) = (1 + \cos t)^p$ and $n = n_{l+1}$ gives
\[ 
\left|\int_\T R_{k,l}^p X_{l+1}^p - \int_\T R_{k,l}^p \int_\T X_{l+1}^p\right| \leq \frac{2\pi p}{d-1}\int_\T R_{k,l}^p\int_\T X_{l+1}^p.
\]
This is \eqref{eq:totalfactor1}. Iterating \eqref{eq:totalfactor1} yields \eqref{eq:totalfactor2}.
\end{proof}

\begin{lemma}\label{lm:factorRout}
Let $p \geq 1$, $d > 2p+1$ and $n_{j+1}/n_j \geq d$, $j \geq 1$. Then for every $k \geq 0$, $m \geq 1$ and non negative integers $l_1, \ldots, l_m \leq p$, we have
\begin{equation}\label{eq:factorRout}
\int_\T R_k^pX_{k+1}^{l_1}\ldots X_{k+m}^{l_m} \leq (1+\epsilon)\int_\T R_k^p \int_\T X_{k+1}^{l_1}\ldots X_{k+m}^{l_m},
\end{equation} 
where $\epsilon = \frac{4\pi d}{d-1}\frac{p(2p+1)}{d-2p-1}$.
\end{lemma}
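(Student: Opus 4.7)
The plan is to expand $Y := X_{k+1}^{l_1}\cdots X_{k+m}^{l_m}$ in a Fourier series, separate the zero-frequency term (which exactly produces $\int_\T R_k^p\int_\T Y$), and estimate the remaining oscillatory contributions uniformly in $m$ via Corollary \ref{cor:factor_e}. Three preparatory observations are key. First, each $X_{k+i}^{l_i}=(1+\cos(n_{k+i}t))^{l_i}$ has non-negative Fourier coefficients $c_j^{(i)}=2^{-l_i}\binom{2l_i}{l_i+j}$ supported on frequencies $\{-l_i n_{k+i},\ldots,l_i n_{k+i}\}$; hence so does their product $Y$, by convolution. Second, the hypothesis $d>2p+1$ ensures, via the telescoping bound $p\sum_{i<i^*}n_{k+i}\leq \frac{p}{d-1}n_{k+i^*}$, that every non-zero frequency $\omega=\sum_i l'_i n_{k+i}$ of $Y$ (with $|l'_i|\leq l_i\leq p$) is uniquely represented and satisfies
\[
|\omega|\geq n_{k+i^*}\cdot \frac{d-1-p}{d-1},
\]
where $i^*$ is the largest index with $l'_{i^*}\neq 0$. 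Consequently $c_\omega = \prod_i c_{l'_i}^{(i)}$, and in particular the zero-frequency coefficient is $c_0 = \prod_i c_0^{(i)} = \int_\T Y$.

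The computation then runs as follows. Write
\[
\int_\T R_k^p Y = c_0\int_\T R_k^p + \sum_{\omega\neq 0}c_\omega \int_\T R_k^p e^{i\omega t},
\]
apply Corollary \ref{cor:factor_e} to bound each oscillatory term by $\frac{2\pi p\deg R_k}{|\omega|}\int_\T R_k^p$, and group the remaining sum by the value of $i^*$. Using $\sum_{l'_i}c_{l'_i}^{(i)}=X_{k+i}(0)^{l_i}=2^{l_i}$ together with the elementary inequality $4^l/\binom{2l}{l}\leq 2l+1$ (which follows from $4^l=\sum_{j=0}^{2l}\binom{2l}{j}\leq (2l+1)\binom{2l}{l}$), the relevant ratio simplifies to
\[
\frac{1}{c_0}\sum_{\omega:\,\text{largest index }i^*}c_\omega \leq 2p(2p+1)^{i^*-1}.
\]
Finally $\deg R_k\leq \frac{d}{d-1}n_k$ and $n_{k+i^*}\geq d^{i^*}n_k$ turn the total error into a geometric series $\sum_{i^*\geq 1}((2p+1)/d)^{i^*}$, which converges exactly because $d>2p+1$, yielding
\[
\left|\int_\T R_k^p Y - \int_\T R_k^p\int_\T Y\right|\leq \frac{4\pi p^2 d}{(d-1-p)(d-2p-1)}\int_\T R_k^p\int_\T Y.
\]
A routine estimate equivalent to $(p+1)(d-1)\geq 2p^2+p$ shows that this constant is controlled by the claimed $\epsilon=\frac{4\pi d}{d-1}\cdot\frac{p(2p+1)}{d-2p-1}$ whenever $d>2p+1$, completing the argument.

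The principal obstacle is enforcing that the estimate be \emph{independent of $m$}. A naive induction peeling off one factor $X_{k+i}^{l_i}$ at a time via Lemma \ref{correlation} (in the spirit of Corollary \ref{cor:totalfactor}) would multiply the bound by $1+O(p/(d-1))$ at each step, producing an $m$-dependent factor $(1+O(p/(d-1)))^m$ that ruins the estimate. The direct Fourier expansion avoids this blow-up precisely because $|\omega|$ grows geometrically with $i^*$, absorbing the worst-case growth $(2p+1)^{i^*-1}$ of the partial sums of $c_\omega/c_0$; this is the same geometric gain that also underlies the use of $d>2p+1$ to guarantee distinct frequencies.
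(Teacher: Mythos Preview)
Your proof is correct and follows essentially the same approach as the paper's: expand $Y$ in its Fourier series, apply Corollary \ref{cor:factor_e} term by term, group the nonzero frequencies by the largest active index $i^*$, and sum the resulting geometric series in $(2p+1)/d$. The only cosmetic difference is that where the paper uses the crude bound $b_j\le b_0$ together with the cardinality estimate $|\textsc{Comb}_r|\le (2p+1)^r$, you instead sum the coefficients exactly via $4^l/\binom{2l}{l}\le 2l+1$ to obtain the slightly sharper $\frac{1}{c_0}\sum_{\omega:i^*}c_\omega\le 2p(2p+1)^{i^*-1}$; this yields an intermediate constant $\frac{4\pi p^2 d}{(d-1-p)(d-2p-1)}$ that you then correctly verify is dominated by the stated $\epsilon$.
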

\begin{proof}
For any $t$,
\[ 
\left(1+ \frac{e^{it}+e^{-it}}{2}\right)^l = \frac{1}{2^l}\left(e^{it/2}+e^{-it/2}\right)^{2l} = \sum_{j=-l}^l\frac{1}{2^l}\binom{2l}{j+l} e^{itj}.
 \]
Thus,
%\[ 
%X_k^l(t) = \sum_{j=-l}^l\frac{1}{2^l}\binom{2l}{j+l} e^{itn_kj},
%\]
%Define
\begin{equation}
\label{eq:f}
f = X_{k+1}^{l_1}\ldots X_{k+m}^{l_m} = \sum_j \Bigg[\frac{1}{2^{l_1}}\binom{2l_1} {j_1+l_1}\ldots\frac{1}{2^{l_m}}\binom{2l_m}{j_m+l_m}\Bigg]e^{itN_j},
\end{equation}
where the sum is over all vectors $j = (j_1,\ldots,j_m) \in {\sf X}_{s=1}^m \{-l_s,\ldots,0,\ldots, l_s\}$ and $N_j = n_{k+1}j_1+\ldots+n_{k+m}j_m$.

A standard computation shows that if $d > 2p+1$, then the mapping $j \mapsto N_j$ is injective.
%Take now two such vectors $j \neq j'$ and let $r$ be the last index where they differ, that is the largest $s \leq m$ for which $j_s \neq j'_s$. Then
%\begin{align*}
%|N_{j'} - N_j| &\geq n_{k+r}|j'_r-j_r| - n_{k+r-1}|j'_{r-1}-j_{r-1}|-\ldots-n_{k+1}|j'_1-j_1| \\
%&\geq n_{k+r} - 2p(n_{k+r-1}+\ldots +n_{k+1}) \geq n_{k+r}-2pn_{k+r}\left(\frac{1}{d}+\ldots+\frac{1}{d^{r-1}}\right) \\
%&\geq n_{k+r}\left(1-\frac{2p}{d-1}\right).
%\end{align*} 
%Therefore, if $d > 2p+1$, then each vector $j$ corresponds to a different value of $N_j$, that is in the expansion \eqref{eq:f} of $f$ all the phases $e^{itN_j}$ are different. 
Let us write
\[ 
f = b_0 + \sum_{j \in \textsc{Comb}} b_je^{itN_j},
 \]
where $b_j = \frac{1}{2^{l_1}}\binom{2l_1}{j_1+l_1}\ldots\frac{1}{2^{l_m}}\binom{2l_m}{j_m+l_m}$ and $\textsc{Comb}$ denotes the set ${\sf X}_{s=1}^m \{-l_s,\ldots, l_s\} \setminus \{(0,\ldots,0)\}$ of all nonzero vectors $j$. 
Since all the Fourier coefficients $b_j$ are positive, they are all upper-bounded by the first one $b_0 = \int_\T f$.
%It is clear that
%\[ b_0 = \int_\T f \qquad \textrm{and} \qquad b_j \leq b_0 \text{ for } j \in \textsc{Comb}. \]
Applying Corollary \ref{cor:factor_e} yields
\begin{align*}
\int_\T R_k^p f &= \left|\int_\T R_k^p b_0 + \sum_{j \in \textsc{Comb}} b_j\int_\T R_k^pe^{itN_j}\right| 
\leq \int_\T R_k^p b_0 + \sum_{j \in \textsc{Comb}} b_j\frac{2\pi p \deg R_k}{|N_j|}\int_\T R_k^p \\
&\leq \left(\int_\T R_k^p\int_\T f\right)\left(1 + 2\pi p \deg R_k\sum_{j \in \textsc{Comb}} \frac{1}{|N_j|}\right).
\end{align*}
To deal with the sum over $j$, we break $\textsc{Comb}$ into the sets $\textsc{Comb}_r$, $r = 1, \ldots, m$, of the vectors $j$ for which the largest index of a nonzero coordinate is $r$. We thus get
\begin{align*}
\sum_{j \in \textsc{Comb}} \frac{1}{|N_j|} &\leq \sum_{r=1}^m\sum_{j \in \textsc{Comb}_r} \frac{1}{n_{k+r}-p(n_{k+1}+\ldots+n_{k+r-1})} \\
&\leq \sum_{r=1}^{m} |\textsc{Comb}_r|\frac{1}{n_{k+r}\left(1-\frac{p}{d-1}\right)} 
\leq \sum_{r=1}^{m} (2p+1)^r\frac{1}{n_{k}d^r\left(1-\frac{p}{d-1}\right)} \\
&\leq \frac{1}{n_{k}\left(1-\frac{p}{d-1}\right)}\frac{2p+1}{d-2p-1} < \frac{2}{n_{k}}\frac{2p+1}{d-2p-1}.
\end{align*}
Plugging this back into the previous estimate and noticing that $(\deg R_k)/n_k \leq (n_1+\ldots+n_k)/n_k \leq 1/d^{k-1}+\ldots+1 < d/(d-1)$ yields \eqref{eq:factorRout}.
\end{proof}

\begin{proof}[Proof of the upper bound of Theorem \ref{thm:intro}]

\begin{comment}
Our goal is to prove the following theorem.
\begin{theorem}\label{thm:upp}
For every $p > 0$ there are positive constants $C_p, d_p$ such that if $n_{j+1}/n_j \geq d_p$, $j=1,2,\ldots$, then for any vectors $v_0,v_1,\ldots$ in a normed space $(E,\|\cdot\|)$, we have
\[ 
\int_\T\left\|\sum_{k=0}^n v_kR_k\right\|^p \leq C_p\sum_{k=0}^n \|v_k\|^p\int_\T R_k^p.
\]
\end{theorem}
\end{comment}

We want to show that, for $(a_k)$, a sequence of nonnegative real numbers, we have
\begin{equation}\label{eq:uppbound-todo}
\int_\T\left(\sum_{k=0}^N a_kR_k\right)^p \leq C_p\sum_{k=0}^N a_k^p\int_\T R_k^p.
\end{equation}
For $N=0$ this is obvious. When $0 < p \leq 1$ this instantly follows from the inequality $(x+y)^p \leq x^p + y^p$, $x,y \geq 0$ (with $C_p = 1$). Let $N \geq 1$. Suppose that for some integer $m \geq 1$, \eqref{eq:uppbound-todo} holds when $m-1 < p \leq m$ and we want to show it when $m < p \leq m+1$. Iterating the inequality $(x+y)^p \leq x^p + 2^p(yx^{p-1}+y^p)$, $x,y \geq 0$ (see \cite{DLNT}, p. 1705), we find
\[ 
\int_\T \left(\sum_{k=0}^N a_kR_k\right)^p \leq a_N^p \int_\T R_N^p + 2^p\left(\sum_{k=0}^{N-1}a_k \int_\T R_k\left(\sum_{i=k+1}^{N}a_iR_i\right)^{p-1} + \sum_{k=0}^{N-1}a_k^p \int_\T R_k^p\right).
 \]
The challenge is to deal with the mixed term
\[ 
\sum_{k=0}^{N-1}a_k\int_\T R_k\left(\sum_{i=k+1}^{N}a_iR_i\right)^{p-1} = \sum_{k=0}^{N-1}a_k\int_\T R_k^pF_k^{p-1},
 \]
where
\[ 
F_k = \sum_{i=k+1}^{N}a_iR_{k+1,i}, \qquad k \geq 0.
 \]
We shall make several observations. Firstly, take $\alpha, \beta > 1$ with $1/\alpha + 1/\beta = 1$ and use H\"older's inequality,
\[ 
\int_\T R_k^pF_k^{p-1} = \int_\T R_k^{p/\alpha}\left(R_k^{p/\beta}F_k^{p-1}\right) \leq \left(\int_\T R_k^p\right)^{1/\alpha}\left(\int_\T R_k^pF_k^{(p-1)\beta}\right)^{1/\beta}
 \]
(which holds trivially when $\beta = 1$). Choosing $\beta$ so that $(p-1)\beta = \lceil p \rceil - 1 = m$ gives us the natural power at $F_k$. Then brutally expanding yields
\begin{align*}
\int_\T R_k^pF_k^{(p-1)\beta} &= \int_\T R_k^p\left(\sum_{i=k+1}^{N} a_iR_{k+1,i}\right)^m \\
&= \sum_{m_{k+1}+\ldots+m_N = m} \binom{m}{m_{k+1},\ldots,m_N} \int_\T R_k^p\prod_{i=k+1}^N a_i^{m_i}R_{k+1,i}^{m_i}.
\end{align*}
The integral $\int_\T R_k^p\prod_{i=k+1}^N R_{k+1,i}^{m_i}$ is of the form $\int_\T R_k^pX_{k+1}^{l_1}\ldots X_{N}^{l_N}$ with the nonnegative integer powers $l_{k+1}, \ldots, l_N$ not exceeding $m < p$. Therefore we can apply Lemma \ref{lm:factorRout} to factor $R_k^p$ out,
\[ 
\int_\T R_k^p\prod_{i=k+1}^N R_{k+1,i}^{m_i} \leq (1+\epsilon)\int_\T R_k^p \int_\T\prod_{i=k+1}^N R_{k+1,i}^{m_i},
 \] 
provided that $d > 2p+1$, and then use the multinomial formula again to get back to $F_k^m$,
\[ 
\int_\T R_k^pF_k^{m} \leq (1+\epsilon)\int_\T R_k^p \int_\T F_k^m.
\]
Recall that $\epsilon = \frac{4\pi d}{d-1}\frac{p(2p+1)}{d-2p-1}$. 
%Choosing, say 
%\begin{equation}\label{eq:d1}
%$d > 30p^2$,
%\end{equation}
%we assure that 
%\[
%\epsilon < \frac{8\cdot 30p^2}{30p^2-1}\frac{p(2p+1)}{30p^2-2p-1} < \frac{8\cdot 30p^2}{29p^2}\frac{3p^2}{27p^2} = \frac{80}{87} < 1.
%\]  
We choose $d_p$ large enough to assure that for $d \geq d_p$ we have {\red$\epsilon < 1$}.
By the inductive assumption,
\[ 
\int_\T F_k^m \leq C_m \sum_{i=k+1}^{N} a_i^m\int_\T R_{k+1,i}^m
\]
with $C_m \geq 1$, provided that
%\begin{equation}\label{eq:d2}
$d \geq d_m$.
%\end{equation}
We finally get 
\begin{align*}
\sum_{k=0}^{N-1}a_k\int_\T R_k^pF_k^{p-1} &\leq \sum_{k=0}^{N-1}a_k\left(\int_\T R_k^p\right)^{1/\alpha}\left(2\int_\T R_k^p\cdot C_m\sum_{i=k+1}^{N} a_i^m\int_\T R_{k+1,i}^m\right)^{1/\beta} \\
&\leq 2C_m \sum_{k=0}^{N}\sum_{i=k+1}^{N} a_ka_i^{p-1}\int_\T R_k^p \left(\int_\T R_{k+1,i}^m\right)^{1/\beta}. 
\end{align*}
Lastly, notice that we have $R_{k+1,i}$ to the power of $m$ but we want the $p$-th power. Since $m < p$, there is some room. Introduce the constant
\[ 
\lambda_p = \left(\frac{(\int_\T X_1^m)^{1/m}}{(\int_\T X_1^p)^{1/p}}\right)^{p-1} < 1.
 \]  
By \eqref{eq:totalfactor2} we obtain
\begin{align*}
\left(\int_\T R_{k+1,i}^m\right)^{1/\beta} &\leq \left(\left(1+\frac{2\pi p}{d-1}\right)^{i-k}\int_\T X_{k+1}^m\ldots\int_\T X_{i}^m\right)^{1/\beta} \\
&\leq \left(\left(1+\frac{2\pi p}{d-1}\right)^{i-k} \left(\lambda_p^{m/(p-1)}\right)^{i-k}\left(\int_\T X_{k+1}^p\ldots\int_\T X_{i}^p\right)^{m/p}\right)^{1/\beta} \\
&= \Bigg[\left(1+\frac{2\pi p}{d-1}\right)^{1/\beta}\lambda_p\Bigg]^{i-k}\left(\int_\T X_{k+1}^p\ldots\int_\T X_{i}^p\right)^{(p-1)/p} \\
&\leq \eta_p^{i-k}\left(\int_\T X_{k+1}^p\ldots\int_\T X_{i}^p\right)^{(p-1)/p},
 \end{align*}
where
$ \eta_p = \left(1+\frac{2\pi p}{d-1}\right)\lambda_p$.
Therefore,
\begin{align*}
\sum_{k=0}^{N-1}a_k\int_\T R_k^pF_k^{p-1} &\leq 2C_m \sum_{k=0}^{N}\sum_{i=k+1}^{N} \eta_p^{i-k}\left(\int_\T R_k^p\right) \cdot a_ka_i^{p-1} \left(\int_\T X_{k+1}^p\ldots\int_\T X_{i}^p\right)^{(p-1)/p} \\
&\leq 2C_m \sum_{k=0}^{N}\sum_{i=k+1}^{N} \eta_p^{i-k}\left(\int_\T R_k^p\right) \cdot \left(\frac{1}{p}a_k^p + \frac{p-1}{p}a_i^p\int_\T X_{k+1}^p\ldots\int_\T X_{i}^p\right) .
 \end{align*}
Provided that $\eta_p < 1$, the first bit can be easily estimated as desired, 
\[ 
\sum_{k=0}^{N}\sum_{i=k+1}^{N} \eta_p^{i-k}\left(\int_\T R_k^p\right) \cdot \frac{1}{p}a_k^p \leq \frac{\eta_p}{p(1-\eta_p)}\sum_{k=0}^{N}a_k^p\int_\T R_k^p.
 \]
The second one requires some more work. With the aid of \eqref{eq:totalfactor1} with $k=1$ and \eqref{eq:totalfactor2},
\[ 
\int_\T R_k^p \int_\T X_{k+1}^p\ldots\int_\T X_{i}^p \leq \left(1 - \frac{2\pi p}{d-1}\right)^{-(i-k)}\int_\T R_i^p,
 \]
so, provided that $\eta_p < 1 - \frac{2\pi p}{d-1}$, that is
%\begin{equation}
\label{eq:d3}
$\lambda_p\left(1+\frac{2\pi p}{d-1}\right) < \left(1-\frac{2\pi p}{d-1}\right)$,
%\end{equation}
we obtain
\begin{align*}
\sum_{k=0}^{N}\sum_{i=k+1}^{N} \eta_p^{i-k}\left(\int_\T R_k^p\right) \cdot\frac{p-1}{p}a_i^p \int_\T X_{k+1}^p\ldots\int_\T X_{i}^p &\leq \frac{p-1}{p}\sum_{i=1}^N a_i^p\int_\T R_i^p \sum_{k=0}^{i-1} \left[\frac{\eta_p}{1 - \frac{2\pi p}{d-1}}\right]^{i-k} \\
&\leq \left[\frac{p-1}{p}\left(1-\frac{{\red \eta}_p}{1 - \frac{2\pi p}{d-1}}\right)^{-1}\right]\sum_{i=1}^N a_i^p\int_\T R_i^p.
\end{align*}
Putting everything together,
\[ 
\sum_{k=0}^{N-1}a_k\int_\T R_k\left(\sum_{i=k+1}^{N}a_iR_i\right)^{p-1} \leq C\sum_{k=0}^N a_k^p\int_\T R_k^p,
 \]
where
\[ 
C = 2C_m\left(\frac{\eta_p}{p(1-\eta_p)} + \frac{p-1}{p}\left(1-\frac{{\red \eta}_p}{1 - \frac{2\pi p}{d-1}}\right)^{-1}\right).
 \]
Thus,
\[ 
\int_\T\left(\sum_{k=0}^N a_kR_k\right)^p \leq 2^p(1+C)\sum_{k=0}^N a_k^p\int_\T R_k^p,
 \]
which completes the proof.
\end{proof}

\begin{remark}\label{rem:consts}
Even though we have not kept track of the values of the constants $d_p, c_p$ and $C_p$ in our arguments, with some extra work it can be shown that  for the upper bound in Theorem \ref{thm:intro} one can take 
\[
d_p^{(\textrm{upper})}=80p^2  \hspace{1cm} \textrm{and} \hspace{1cm} C_p=(16p)^{p+1}, \hspace{1cm} \hfill p>1,
\] 
whereas for the lower bound it is enough to have
\[
	\begin{array}{l}
		d_p^{(lower)} = \left( \frac{10^{12}}{p-1} \right)^{\frac{3}{p-1}} \\
		c_p = \left( \frac{p-1}{10^{13}} \right)^{\frac{1}{p-1}}
	\end{array}, \quad  p \in (1,2] \hspace{1cm} \textrm{and} \hspace{1cm}  \begin{array}{l}
		d_p^{(lower)} = 10^{10p^2} \\
		c_p = 10^{-8p}
	\end{array}, \quad p>2.
\] 
\end{remark}

\end{document}